\theoremstyle{thmstyleone}%
\newtheorem{theorem}{Theorem}
\newtheorem{proposition}[theorem]{Proposition}%
\newtheorem{assumption}[theorem]{Assumption}%
\newtheorem{lemma}[theorem]{Lemma}%
\newtheorem{corollary}[theorem]{Corollary}%
\theoremstyle{thmstyletwo}%
\theoremstyle{thmstylethree}%
\def\checkmark{\tikz\fill[scale=0.4](0,.35) -- (.25,0) -- (1,.7) -- (.25,.15) -- cycle;} 
\newcommand\crossmark[1][]{%
  \tikz[scale=0.4,#1]{
    \fill(0,0)--(0.1,0) .. controls (0.5,0.4) .. (1,0.7)--(0.9,0.7) ..  controls (0.5,0.5) ..(0,0.1) --cycle;
    \fill(1,0.1)--(0.9,0.1) .. controls (0.5,0.3) .. (0,0.7)--(0.1,0.7) .. controls (0.5,0.4) ..(1,0.2) --cycle;
  }%
}
\newcommand{\norm}[1]{\left\lVert#1\right\rVert}
\DeclareMathOperator*{\argmin}{arg\,min}
\newcommand\restr[2]{{
  \left.\kern-\nulldelimiterspace 
  #1 
  \vphantom{\big|} 
  \right|_{#2} 
  }}
\newcommand{\dif}[0]{\mathrm{d}} 
\begin{document}

\rowcolors{2}{gray!10}{white}

\title[GEORCE: A Fast New Control Algorithm for Computing Geodesics]{GEORCE: A Fast New Control Algorithm for Computing Geodesics}


\author*[1]{\fnm{Frederik} \sur{Möbius Rygaard}}\email{fmry@dtu.dk}
\author*[1]{\fnm{Søren} \sur{Hauberg}}\email{sohau@dtu.dk}

\affil*[1]{\orgdiv{DTU Compute}, \orgname{Technical University of Denmark (DTU)}, \orgaddress{\street{Anker Engelundsvej 1}, \city{Kongens Lyngby}, \postcode{2800}, \country{Denmark}}}



\abstract{Computing geodesics for Riemannian manifolds is a difficult task that often relies on numerical approximations. However, these approximations tend to be either numerically unstable, have slow convergence, or scale poorly with manifold dimension and number of grid points. We introduce a new algorithm called \textit{GEORCE} that computes geodesics in a local chart via a transformation into a discrete control problem. We show that \textit{GEORCE} has global convergence and asymptotic quadratic local convergence. In addition, we show that it extends to Finsler manifolds. For both Finslerian and Riemannian manifolds, we thoroughly benchmark GEORCE against several alternative optimization algorithms and show empirically that it has a much faster and more accurate performance for a variety of manifolds, including key manifolds from information theory and manifolds that are learned using generative models.}

\keywords{Riemannian Manifolds, Finsler Manifolds, Geodesics, Optimization, Control Problem \\
\textbf{Mathematics Subject Classification} 53C22, 49Q99, 65D15}



\maketitle

\section{Introduction}\label{sec1}
Reliably computing geodesics that connect point pairs is a constant source of frustration to practitioners working with non-trivial Riemannian manifolds. Such manifolds see common use across many fields, including medical image analysis \citep{pennec2019medicalimage, medical_landmark, medical_diffusion_tensor}, protein modeling \citep{Watson2022.12.09.519842, deftelsen_proetin, Shapovalov_protein}, robotics \citep{beikmohammadi2021learningriemannianmanifoldsgeodesic, simeonov2021neuraldescriptorfieldsse3equivariant, Senanayake2018DirectionalGM, feiten_robotics}, generative modeling \citep{arvanitidis2021latentspaceodditycurvature, shao2017riemannian, debortoli2022riemannianscorebasedgenerativemodelling, jo2024generativemodelingmanifoldsmixture, mathieu2020riemanniancontinuousnormalizingflows, moser_flow} and information geometry \citep{arvanitidis2022pullinginformationgeometry, Nielsen_2020, cheng2025categoricalflowmatchingstatistical, myers2025learningassisthumansinferring, davis2024fisherflowmatchinggenerative}.

\emph{Geodesics}, or \emph{locally shortest paths}, are the key computational tools for engaging with data residing on a Riemannian manifold. The lengths of geodesic curves inform us about manifold distances. Likewise, the \emph{exponential} and \emph{logarithm maps}, which generalize vector addition and subtraction, are rooted in geodesics \citep[Chapter 9]{dggm}. Unfortunately, the solution to the boundary-value problem for geodesics are only available in closed form for the simplest manifolds, and numerical approximations are therefore required in practice.

Such approximations are typically achieved by minimizing the \emph{energy functional} over a suitable set of curves connecting the given boundary points and parametrized by $\gamma: [0,1] \rightarrow U$ \citep[Page 194]{do1992riemannian}
\begin{equation} \label{eq:energy}
    \mathcal{E}(\gamma) = \frac{1}{2}\int_{0}^{1}\dot{\gamma}(t)^{\top}G\left(\gamma(t)\right)\dot{\gamma}(t)\,\dif t,
\end{equation}
where $G$ denotes the metric matrix function of the manifold in local coordinates in a chart $\phi: U \rightarrow \mathcal{M}$, where $U$ is an open subset of $\mathbb{R}^{d}$. This energy is commonly minimized with general-purpose gradient-based optimization methods \citep{software:stochman, arvanitidis2021latentspaceodditycurvature, shao2017riemannian} or, in particular when considering the initial value problem and using shooting methods, by solving the system of ordinary differential equations that arise by applying the Euler-Lagrange equations \citep{leapfrog_noakes, tosi2014metricsprobabilisticgeometries}. Unfortunately, both approaches are often numerically brittle, exhibit slow convergence, or scale poorly with the number of grid points and manifold dimensions.

\begin{figure}[t!]
    \centering
    \includegraphics[width=1.0\textwidth]{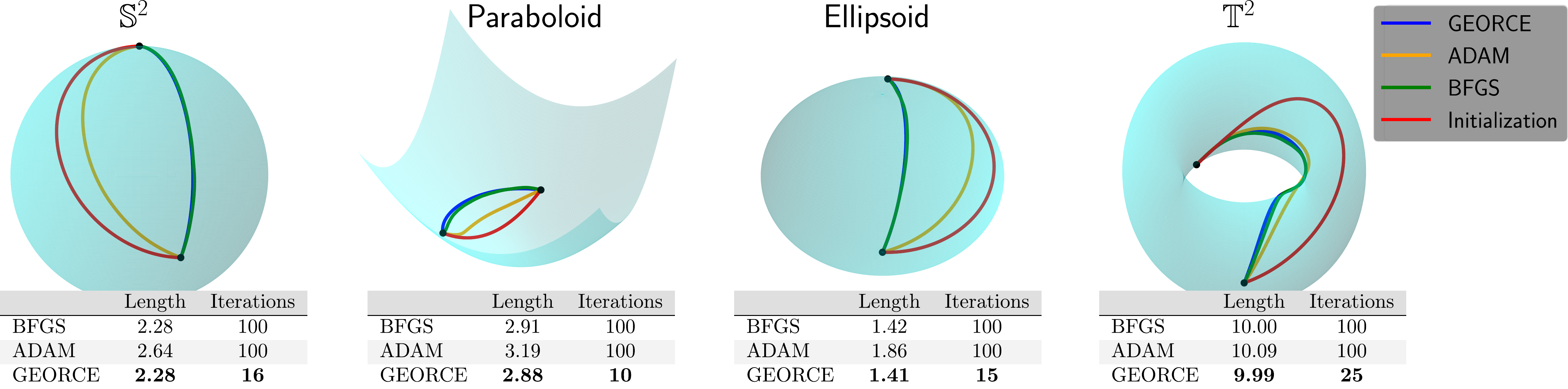}
    \caption{Comparison between \textit{GEORCE} and baseline methods for computing locally length minimizing geodesics between given point pairs on four different manifolds. All algorithms are terminated if the $\ell^{2}$-norm of the gradient of the discretized energy functional \eqref{eq:disc_const_energy} is less than $10^{-4}$, or the number of iterations exceeds $100$. Note that sometimes the respective solutions overlap, making it difficult to distinguish them. Also note, that in the case of the torus the initial curve determines the direction around the hole so that the obtained geodesics follow this direction and are therefore clearly not the globally shortest geodesics between the given start and end point. Experimental details and benchmark data are in Appendix~\ref{ap:experiments}.}
    \label{fig:synthetic_riemannian_geodesics}
    \vspace{-1.em}
\end{figure}

\textbf{In this paper}, we introduce the \textit{GEORCE} algorithm (\textit{GEodesic Optimization algoRithm using Control tEchniques}) that computes numerical solutions to the geodesic boundary value problem via an optimal control strategy. \textit{GEORCE} applies to general Riemannian as well as general Finslerian manifolds. We show that the algorithm has global convergence similar to gradient descent and that it has asymptotic quadratic local convergence similar to the Newton method in the limit. Empirically, we compare \textit{GEORCE} with alternative optimization algorithms on the discretized energy functional and see that \textit{GEORCE} is much faster and attains lower Riemannian length as illustrated in Fig.~\ref{fig:synthetic_riemannian_geodesics}.


\section{Background and related work}
\textbf{A Riemannian manifold}, $(\mathcal{M},g)$, can indirectly be considered as a smooth $n$-dimensional surface in a Euclidean space of sufficiently high dimension, see \citep{nash_embedding, whitney_embedding}. In this way $\mathcal{M}$ inherits its metric $g$ from the ambient space. All the intrinsic geometry of $(\mathcal{M},g)$, including curvature and geodesic distances, are encoded in the metric $g$. The metric can be -- and often is -- constructed independently of the formal embedding alluded to above, namely directly as a symmetric positive definite matrix valued function on the domain of a parametrization of the manifold in question. The metric, $g: T_{x}\mathcal{M} \times T_{x}\mathcal{M} \rightarrow \mathbb{R}$, defines an inner product that smoothly varies over the tangent spaces of $\mathcal{M}$ such that in a local chart $g(v,w)=v^{\top}G(x)w$, where $G$ is the metric matrix function evaluated in $x \in \mathcal{M}$ for $v,w \in T_{x}\mathcal{M}$. The tangent space, $T_{x}\mathcal{M}$, consists of tangents to all curves at $x \in \mathcal{M}$ \citep[Page 7 Definition 2.6]{do1992riemannian}.

\textbf{A geodesic} can then be defined using the inner product in each tangent space. The length of a curve, $\gamma: [0,1] \rightarrow U$ parameterized by $t \in [0,1]$ is given by 
\begin{equation} \label{eq:length}
    \mathcal{L}(\gamma) = \int_{0}^{1} \sqrt{\dot{\gamma}(t)^{\top}G(\gamma(t))\dot{\gamma}(t)} \,\dif t,
\end{equation}
where $\phi: U \rightarrow \mathcal{M}$ is a local chart for an open set $U \subset \mathbb{R}^{d}$. A Riemannian manifold can be equipped with a unique metrically compatible connection, $\nabla$, known as the Levi-Civita Connection \citep[Page 55 Theorem 3.6]{do1992riemannian}, where uniqueness and existence are guaranteed by completeness of the metric \citep[Page 63 Lemma 2.3]{do1992riemannian}. Note that curves are geodesics if and only if they are critical points of the energy functional, i.e., the first variation of the energy functional is zero for all variations with fixed endpoints assuming that the end points are not conjugate points \citep[Page 196 Proposition 2.5]{do1992riemannian}. This also holds in the Finslerian case \citep[Page 81]{ohta2021comparison}.

An equivalent way to define geodesics is to define these as curves, $\gamma$, with zero acceleration using the Levi-Civita connection as the covariant derivative, i.e., $\nabla_{\dot{\gamma}(t)}\dot{\gamma}(t) = 0$. In a local chart, this gives rise to the following (usually non-linear) \textsc{ode}-system \citep[Page 62]{do1992riemannian}
\begin{equation} \label{eq:bvp_ode}
    \frac{\dif^{2} \gamma^{k}}{\dif t^{2}}+\Gamma_{ij}^{k}\frac{\dif \gamma^{i}}{\dif t}\frac{\dif \gamma^{j}}{\dif t} = 0, \quad \gamma(0)=a,\gamma(1)=b,
\end{equation}
written in Einstein notation\footnote{In Einstein notation, upper and lower indices constitute a sum, i.e. $a^{i}b_{i}=\sum_{i}a_{i}b_{i}$}, where $\left\{\Gamma_{ij}^{k}\right\}_{i,j,k}$ denote the Christoffel symbols. The exponential map $\mathrm{Exp}_{x}(v): T_{x}\mathcal{M} \rightarrow \mathcal{M}$ is then defined as $\mathrm{Exp}_{x}(v)=\gamma(1)$, where $\gamma$ is a geodesic with $\gamma(0)=x \in \mathcal{M}$ and initial velocity vector, $v \in T_{x}\mathcal{M}$, i.e. $\dot{\gamma}(0)=v$. Within the injectivity radius of $x$ the exponential map, $\mathrm{Exp}_{x}$, is a diffeomorphism and its inverse is the logarithmic map, $\mathrm{Log}_{x}: \mathcal{M} \rightarrow T_{x}\mathcal{M}$.

Examples of Riemannian manifolds include information geometric spaces, i.e., statistical manifolds, where ``points'' in the manifold correspond to statistical distributions. These can be equipped with the so-called Fisher-Rao metric, see \citep{miyamoto2024closedformexpressionsfisherraodistance, Nielsen_2020}:
\begin{equation} \label{eq:fisher_rao_metric}
    G_{ij}(\theta) = \int_{\mathcal{X}}p(x | \theta)\left(\frac{\partial}{\partial \theta^{i}}\log p(x | \theta)\right)\left(\frac{\partial}{\partial \theta^{j}}\log p(x | \theta)\right) \,\dif\mu(x),
\end{equation}
where $\theta$ corresponds to the parameters of the probability density function $p$ under a measure $\mu$. Informally, the Fisher-Rao metric can be seen as a measure of the amount of information in the data around a parameter $\theta$ \citep{fisher_intuitiv}.

\textbf{The energy functional} in Eq.~\ref{eq:energy} can be discretized and minimized using standard optimization methods. Consider fixed start and end points $x_{0}=a \in U$ and $x_{T}=b \in U$, and let $x_{0:T}:=\{x_{t}\}_{t=0}^{T}$ denote a discretized version of the candidate curve $\gamma$ in $T+1$ points in a local chart $U$. Minimizing the energy functional can be re-written as a discrete constrained optimization problem, i.e., subdivision of $[0,1]$ into $T$ subintervals, in the chosen local chart \citep{arvanitidis2021latentspaceodditycurvature, shao2017riemannian}
\begin{equation} \label{eq:disc_const_energy}
    \begin{split}
        E(x_{0:T}) = \quad \min_{x_{0:T}} \quad &\sum_{t=0}^{T-1} (x_{t+1}-x_{t})^{\top}G(x_{t})(x_{t+1}-x_{t}) \\
        \text{s.t.} \quad &x_{0}=a,x_{T}=b,
    \end{split}
\end{equation}
where $G\left(\cdot\right)$ denotes the metric matrix function in local coordinates, and the tangent vectors of the coordinate expression for $\gamma$ are discretized as $\dot{\gamma}(t)\approx\left(x_{t+1}-x_{t}\right)T$. Note that the constant scaling by $\sfrac{1}{2}$ is omitted since it does not affect the optimal solution. With this formulation, the curve is approximated as a piece-wise linear function. This is not restrictive since $\gamma$ can be approximated into a curve for any given precision $T$ or converted to a smooth function, e.g., using splines \citep{software:stochman}. Note that by approximating the functional energy by forward differences, the computational error is linear in $\sfrac{1}{T}$, that is, in the form $\mathcal{O}\left(\sfrac{1}{T}\right)$. 

\begin{wrapfigure}[15]{r}{0.50\textwidth}
    \vspace{-10mm}
    \centering
    \includegraphics[width=0.50\textwidth]{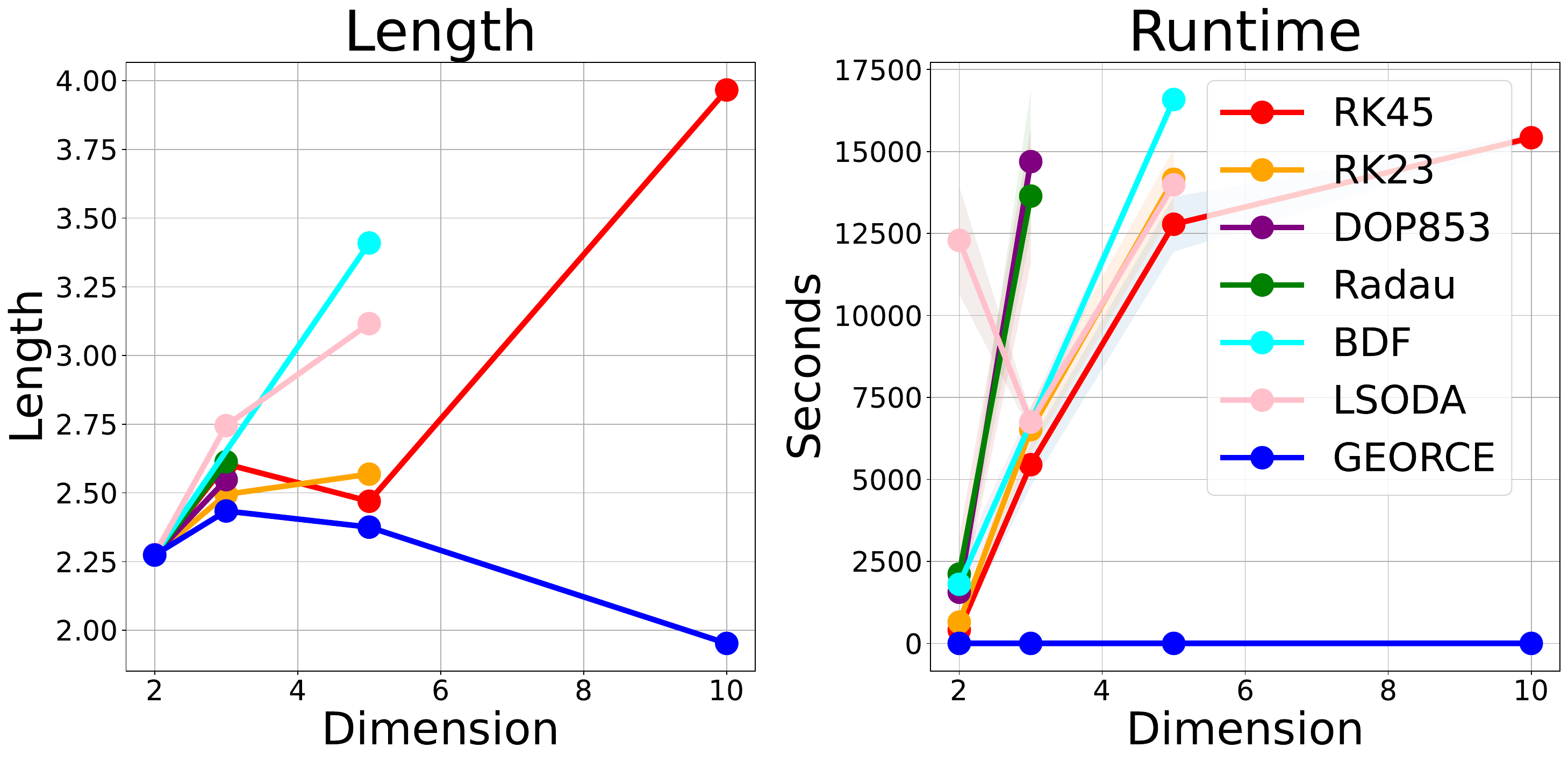}
    \vspace{-3mm}
    \caption{The estimates of \textsc{bvp}-solvers with different integration methods compared to \textit{GEORCE} on $\mathbb{S}^{n}$ for $n=2,3,5,10$. The \textsc{bvp}-solvers minimize the squared error to the boundary condition using \textit{BFGS} \citep{broyden_bfgs, fletcher_bfgs, Goldfarb1970AFO, shanno_bfgs}. All methods are terminated if they take more than 24 hours or use more than 10 GB of memory on a CPU. Details on the experiment is found in Appendix~\ref{ap:hyper_parameters}.}
    \label{fig:bvp_comparison}
    \vspace{-1.65em}
\end{wrapfigure}

Similarly to the energy functional in Eq.~\ref{eq:energy}, it is assumed that a stationary point to the discretized energy functional in Eq.~\ref{eq:disc_const_energy} is either a local or global minimum point. Thus, the number of points, T, in Eq.~\ref{eq:disc_const_energy} needs to be sufficiently high to achieve this. If this is not fulfilled, then there may exist saddle points for the discretized energy functional.

The optimization problem in Eq.~\ref{eq:disc_const_energy} can be solved using standard solvers in the variables $x_{1:(T-1)}$. Gradient descent converges to a local minimum (\emph{global convergence}), but will have local linear convergence, while a Newton method or quasi-Newton method will have local quadratic and super-linear convergence, respectively, but will involve solving a linear equation system and will, in general, not have global convergence \citep[Chapter 8]{luenberger2008linear}.

\textbf{Limitations of existing numerical solvers.} Solving the \textsc{ode}-system in Eq.~\ref{eq:bvp_ode} as a boundary value problem (\textsc{bvp}) can have difficulties converging depending on the choice of initial velocity vector \citep{leapfrog_noakes} and does not scale well in higher dimension. We illustrate this in Fig.~\ref{fig:bvp_comparison}, where we show the estimated length and runtime for $\mathbb{S}^{n}$ with $n=2,3,5,10,20$ for different integration methods compared to our proposed algorithm (\textit{GEORCE}). Different approaches try to circumvent this by solving the \textsc{bvp} on subintervals \citep{leapfrog_noakes, leapfrog_optimal_control} or solving the \textsc{bvp}-problem as a fixed-point problem in a probabilistic manner \citep{hennig2014probabilisticsolutionsdifferentialequations, arvanitidis2019fastrobustshortestpaths}. However, these methods often require access to higher-order information about the metric tensor, e.g in the form of Christoffel symbols.

Directly minimizing the energy in Eq.~\ref{eq:disc_const_energy} can, e.g., be done using off-the-shelf solvers like \textit{ADAM} \citep{kingma2017adam} or \textit{BFGS} \citep{broyden_bfgs, fletcher_bfgs, Goldfarb1970AFO, shanno_bfgs}. However, these methods tend to have slow convergence and scale poorly in the manifold dimension and number of grid points (see Sec.~\ref{sec:experiments}). Alternatively, the geodesic can be estimated by discretizing the smooth manifold by simplicial complexes \citep{crane2020survey}, for which there exist different geodesic solvers \citep{mmp, chen_han, chen_han_updated, vtp, hadi:rss:2021}. Although discretized manifolds appear naturally in computational geometry, the approach is not scalable to discretize high-dimensional smooth manifolds.

\section{GEORCE}
To develop an efficient primal-dual algorithm for computing geodesics, we will first reformulate the discretized energy functional \eqref{eq:disc_const_energy} as a discrete control problem in a local chart. We then derive the necessary conditions for a minimum for the control problem using techniques from optimal control to decompose the discrete control problem into strictly convex subproblems; from this, we deduce an iterative scheme to compute geodesics. Finally, we prove that our proposed algorithm exhibits global convergence and local asymptotic quadratic convergence.

We will assume the following for our algorithm
\begin{itemize}
    \item We assume access to a local chart of the manifold and assume that the entire candidate curve of the geodesic can be represented within the same chart.
    \item All computations are performed within the local coordinate chart $U$, which is defined based on the induced metric of the Riemannian manifold.
    \item For computational efficiency, we will leverage the Euclidean properties of the local chart (i.e., straight-line interpolation within $U$).
    \item We will assume that the end points are not conjugate points, and that the discretization is sufficiently fine such that a critical point of the discretized energy functional in Eq.~\ref{eq:disc_const_energy} is either a local or global minimum point.
\end{itemize}

\subsection{Deriviaton of GEORCE}
As a starting point, we reformulate the problem of finding a minimum of the discrete energy functional \eqref{eq:disc_const_energy} to a control problem in a local chart
\begin{equation} \label{eq:energy_control}
    \begin{split}
        \min_{(x_{1:T-1},u_{0:T-1})} \quad &\left\{\sum_{t=0}^{T-1}u_{t}^{\top}G(x_{t})u_{t}\right\} \\
        \text{s.t.} \quad &x_{t+1} = x_{t}+u_{t}, \quad t=0,\dots,T-1, \\
        &x_{0}=a,x_{T}=b.
    \end{split}
\end{equation}
Here, $x_{0:T}$ denotes the state variables, while $u_{0:(T-1)}$ corresponds to the control variables, i.e., the velocity vectors along the curve (modulo scaling). A key insight into our approach is that rephrasing into an optimal control problem leads to a set of convex subproblems. Using this, we derive the following proposition.
%
%
\begin{proposition} \label{prop:riemann_cond}
    The necessary conditions for a minimum in Eq.~\ref{eq:energy_control} is
    \begin{equation} \label{eq:energy_opt_condtions}
        \begin{split}
            &2G(x_{t})u_{t}+\mu_{t}=0, \quad t=0,\dots, T-1, \\
            &x_{t+1}=x_{t}+u_{t}, \quad t=0,\dots,T-1, \\
            &\restr{\nabla_{y}\left[u_{t}^{\top}G(y)u_{t}\right]}{y=x_{t}}+\mu_{t}=\mu_{t-1}, \quad t=1,\dots,T-1. \\
            &x_{0}=a, x_{T}=b,
        \end{split}
    \end{equation}
    where $\mu_{t} \in \mathbb{R}^{d}$ for $t=0,\dots,T-1$.
\end{proposition}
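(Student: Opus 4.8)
The plan is to treat Eq.~\ref{eq:energy_control} as an equality-constrained nonlinear program in the variables $(x_{1:T-1},u_{0:T-1})$ and to read off the stated system as its first-order (KKT) stationarity conditions. The key preliminary observation is that each dynamics constraint $x_{t+1}=x_t+u_t$ is \emph{affine} in the optimization variables, so the affine constraint qualification holds automatically; hence stationarity of the Lagrangian is a genuine \emph{necessary} condition at any local minimum, requiring nothing beyond differentiability of $G$. (This is the point that justifies calling the conditions ``necessary'' rather than merely characterizing candidate stationary points.)

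First I would introduce one multiplier $\mu_t\in\mathbb{R}^d$ per dynamics constraint, $t=0,\dots,T-1$, and form the Lagrangian
\begin{equation*}
    \mathcal{L} = \sum_{t=0}^{T-1} u_t^{\top}G(x_t)u_t + \sum_{t=0}^{T-1}\mu_t^{\top}\left(x_t+u_t-x_{t+1}\right).
\end{equation*}
Stationarity in $\mu_t$ simply reproduces the dynamics $x_{t+1}=x_t+u_t$, and the endpoint conditions $x_0=a$, $x_T=b$ are imposed directly, which already accounts for the second and fourth lines of Eq.~\ref{eq:energy_opt_condtions}. Next I would differentiate in the controls: for each $t$ the variable $u_t$ appears only through $u_t^{\top}G(x_t)u_t$ and $\mu_t^{\top}u_t$, so, using symmetry of $G(x_t)$,
\begin{equation*}
    \nabla_{u_t}\mathcal{L} = 2G(x_t)u_t+\mu_t = 0, \qquad t=0,\dots,T-1,
\end{equation*}
which is the first line of Eq.~\ref{eq:energy_opt_condtions}.

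The delicate step, and the one I expect to require the most care, is the costate recursion obtained by differentiating with respect to the \emph{interior} states. Only $x_1,\dots,x_{T-1}$ are free (the endpoints are fixed), so stationarity in $x_t$ is imposed exactly for $t=1,\dots,T-1$, which is what pins down the index range of the third line. Each such interior $x_t$ enters $\mathcal{L}$ in three distinct places: the objective term $u_t^{\top}G(x_t)u_t$ through the argument of $G$; the step-$t$ constraint $\mu_t^{\top}(x_t+u_t-x_{t+1})$, contributing $+\mu_t$; and the step-$(t-1)$ constraint $\mu_{t-1}^{\top}(x_{t-1}+u_{t-1}-x_t)$, contributing $-\mu_{t-1}$. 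Collecting these yields
\begin{equation*}
    \restr{\nabla_{y}\left[u_t^{\top}G(y)u_t\right]}{y=x_t}+\mu_t-\mu_{t-1}=0,
\end{equation*}
i.e.\ the third line of Eq.~\ref{eq:energy_opt_condtions}. The main thing to get right here is the bookkeeping of which two consecutive constraint terms a shared interior state couples, together with the signs coming from $+x_t$ versus $-x_t$, and the fact that the recursion does not close at $t=0$ or $t=T$ precisely because those states are not optimization variables. Everything else is a routine differentiation of the quadratic form in $G$.
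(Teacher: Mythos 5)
Your proposal is correct, and it arrives at exactly the system in Eq.~\ref{eq:energy_opt_condtions}, but it takes a more elementary route than the paper. The paper's proof is a one-step appeal to the time-discrete Pontryagin maximum principle: it forms the stage Hamiltonian $H_{t}(x_{t},u_{t},\mu_{t})=u_{t}^{\top}G(x_{t})u_{t}+\mu_{t}^{\top}(x_{t}+u_{t})$, notes that it is strictly convex in $u_{t}$ because $G(x_{t})$ is positive definite, and cites the discrete maximum principle to read off the control stationarity, the dynamics, and the costate recursion. Your Lagrangian/KKT derivation is precisely the unwrapping of that principle for unconstrained controls: your multipliers $\mu_{t}$ are the costates, your stationarity in $u_{t}$ is $\nabla_{u_{t}}H_{t}=0$, and your interior-state bookkeeping (the coupling of $x_{t}$ to the step-$t$ and step-$(t-1)$ constraints with opposite signs) is the costate equation $\mu_{t-1}=\nabla_{x_{t}}H_{t}$. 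What your route buys is self-containedness and a point the paper leaves implicit: since the dynamics constraints are affine, the constraint qualification holds automatically, so stationarity of the Lagrangian is genuinely \emph{necessary} at any local minimum --- this justifies the proposition's wording without invoking any external theorem. What the paper's route buys is brevity and the explicit observation of strict convexity of $H_{t}$ in $u_{t}$, which is not needed for the proposition itself but is the structural fact the authors reuse immediately afterwards to solve the fixed-metric subproblems in closed form (Proposition~\ref{prop:update_scheme}).
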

\begin{proof}
    The Hamiltonian of Eq.~\ref{eq:energy_control} is
    \begin{equation*}
        H_{t}(x_{t},u_{t},\mu_{t})=u_{t}^{\top}G(x_{t})u_{t}+\mu_{t}^{\top}(x_{t}+u_{t}).
    \end{equation*}
    The Hamiltonian $H_{t}(x_{t},u_{t},\mu_{t})$ is strictly convex in $u_{t}$, since $G(x_{t})$ is positive definite. The time-discrete version of Pontryagins maximum principle \citep{kirk2004optimal} then yields the following when applied to our control problem \eqref{eq:energy_control}
    \begin{equation}
        \begin{split}
            &2G(x_{t})u_{t}+\mu_{t}=0, \quad t=0,\dots, T-1, \\
            &x_{t+1}=x_{t}+u_{t}, \quad t=0,\dots,T-1, \\
            &\restr{\nabla_{y}\left[u_{t}^{\top}G(y)u_{t}\right]}{y=x_{t}}+\mu_{t}=\mu_{t-1}, \quad t=1,\dots,T-1. \\
            &x_{0}=a, x_{T}=b.
        \end{split}
    \end{equation}
\end{proof}
Proposition~\ref{prop:riemann_cond} decomposes the minimization problem in Eq.~\ref{eq:energy_control} into a system of equations for each time step $t$. Note that these equations are necessary conditions for the optimum for Eq.~\ref{eq:energy_control} and are sufficient in the case where the objective function in Eq.~\ref{eq:energy_control} is convex in both $x_{t}$ and $u_{t}$ \citep[Page 244]{boyd2004convex}. However, this is generally not the case, but due to the assumption applying a sufficiently fine discretization the solution to Eq.~\ref{eq:energy_opt_condtions} will be a local or global minimum point.

The problem with the necessary conditions in Proposition~\ref{prop:riemann_cond} is that the equations cannot be solved in closed form for a general metric matrix function, $G$. To circumvent this, we will derive an iterative scheme. Consider the variables $x_{0:T}^{(i)}$ and $u_{0:T}^{(i)}$ in iteration $i$. We apply the ``trick'' of fixing the metric matrix function, and the derivative of the inner product in iteration $i$.
\begin{equation} \label{eq:fixed_variables}
    \begin{split}
        &\nu_{t} := \restr{\nabla_{y}\left(u_{t}^{\top}G(y)u_{t}\right)}{y=x_{t}^{(i)},u_{t}=u_{t}^{(i)}}, \quad t=1,\dots,T-1, \\
        &G_{t} := G\left(x_{t}^{(i)}\right), \quad t=0,\dots,T-1,
    \end{split}
\end{equation}
Fixing these in iteration $i$, the system of equations in Eq.~\ref{eq:energy_opt_condtions} reduces to
\begin{equation} \label{eq:energy_zero_point_problem}
    \begin{split}
        &2G_{t}u_{t}+\mu_{t} = 0, \quad t=0,\dots,T-1, \\
        &\nu_{t}+\mu_{t} = \mu_{t-1}, \quad t=1,\dots,T-1, \\
        &\sum_{t=0}^{T-1}u_{t}=b-a, \\
    \end{split}
\end{equation}
where $\nu_{1:(T-1)}$ and $G_{0:(T-1)}$ are fixed \eqref{eq:fixed_variables}. Since $G_{0:(T-1)}$ are positive definite, the modified system of equations in Eq.~\ref{eq:energy_zero_point_problem} can be explicitly solved in iteration $i$ for $u_{0:(T-1)}$ and $\mu_{0:(T-1)}$ as shown in the following proposition.
\begin{proposition} \label{prop:update_scheme}
    The update scheme for $u_{t},\mu_{t}$ and $x_{t}$ is
    \begin{equation} \label{eq:energy_update_schem}
        \begin{split}
            &\mu_{T-1} = \left(\sum_{t=0}^{T-1}G_{t}^{-1}\right)^{-1}\left(2(a-b)-\sum_{t=0}^{T-1}G_{t}^{-1}\sum_{t>j}^{T-1}\nu_{j}\right), \\
            &u_{t} = -\frac{1}{2}G_{t}^{-1}\left(\mu_{T-1}+\sum_{j>t}^{T-1}\nu_{j}\right), \quad t=0,\dots,T-1, \\
            &x_{t+1} = x_{t}+u_{t}, \quad t=0,\dots,T-1, \\
            &x_{0}=a.
        \end{split}
    \end{equation}
\end{proposition}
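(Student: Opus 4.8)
The plan is to solve the linear system in Eq.~\eqref{eq:energy_zero_point_problem} directly by back-substitution, exploiting its block-bidiagonal structure. First I would treat the costate recursion $\nu_t + \mu_t = \mu_{t-1}$ as a first-order linear difference equation for $\mu_t$ and telescope it. Summing $\mu_{t-1} - \mu_t = \nu_t$ from a generic index up to $T-1$ expresses each $\mu_t$ in terms of the terminal costate $\mu_{T-1}$ and a partial sum of the fixed gradients $\nu_j$; concretely one obtains $\mu_t = \mu_{T-1} + \sum_{j>t}^{T-1}\nu_j$ for each $t=0,\dots,T-1$ (with the convention that the empty sum at $t=T-1$ is zero). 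This is the key structural observation: once $\mu_{T-1}$ is known, every costate is determined explicitly.

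Next I would use the stationarity condition $2G_t u_t + \mu_t = 0$, which, because each $G_t$ is positive definite and hence invertible, gives $u_t = -\tfrac{1}{2}G_t^{-1}\mu_t = -\tfrac{1}{2}G_t^{-1}\bigl(\mu_{T-1} + \sum_{j>t}^{T-1}\nu_j\bigr)$. This already matches the claimed expression for $u_t$ in Eq.~\eqref{eq:energy_update_schem}, so it only remains to pin down $\mu_{T-1}$. For that I would substitute this formula for $u_t$ into the single remaining scalar-vector constraint $\sum_{t=0}^{T-1}u_t = b-a$, which came from telescoping the dynamics $x_{t+1}=x_t+u_t$ together with the boundary conditions $x_0=a$, $x_T=b$. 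Substituting yields
\begin{equation*}
    b-a = -\frac{1}{2}\sum_{t=0}^{T-1}G_t^{-1}\mu_{T-1} - \frac{1}{2}\sum_{t=0}^{T-1}G_t^{-1}\sum_{j>t}^{T-1}\nu_j.
\end{equation*}

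The final step is to solve this for $\mu_{T-1}$. Collecting the $\mu_{T-1}$ term and noting that $\sum_{t=0}^{T-1}G_t^{-1}$ is a sum of positive-definite matrices, hence itself positive definite and invertible, I would multiply through by $-2$ and invert to get $\mu_{T-1} = \bigl(\sum_{t=0}^{T-1}G_t^{-1}\bigr)^{-1}\bigl(2(a-b) - \sum_{t=0}^{T-1}G_t^{-1}\sum_{j>t}^{T-1}\nu_j\bigr)$, which is exactly the stated formula. The update for $x_{t+1}$ is then just the restatement of the dynamics. The only genuine obstacle I anticipate is bookkeeping with the partial-sum indices — in particular confirming that the nested sum $\sum_{t=0}^{T-1}G_t^{-1}\sum_{j>t}^{T-1}\nu_j$ in the $\mu_{T-1}$ formula matches the one produced by substitution, and keeping the empty-sum conventions at the endpoints consistent — rather than any conceptual difficulty, since positive definiteness of each $G_t$ guarantees all required inverses exist and the system is genuinely linear once the variables in Eq.~\eqref{eq:fixed_variables} are frozen.
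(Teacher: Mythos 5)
Your proposal is correct and follows essentially the same route as the paper's own proof: telescoping the costate recursion to get $\mu_{t}=\mu_{T-1}+\sum_{j>t}^{T-1}\nu_{j}$, inverting the positive definite $G_{t}$ to obtain $u_{t}$, and substituting into the feasibility constraint $\sum_{t=0}^{T-1}u_{t}=b-a$ to solve for $\mu_{T-1}$ via the invertibility of the sum of inverse positive definite matrices. Your index bookkeeping is in fact slightly cleaner than the paper's, which contains a minor notational inconsistency in the nested sums (e.g.\ writing $\sum_{t>j}^{T-1}\nu_{j}$ where $\sum_{j>t}^{T-1}\nu_{j}$ is meant).
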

\begin{proof}
    From the update formula $\nu_{t}+\mu_{t}=\mu_{t-1}$ for $t=1,\dots,T-1$ we deduce that $\mu_{t} = \mu_{T-1}+\sum_{j>t}^{T-1}\nu_{j}$. From the update scheme $2G_{t}u_{t}+\mu_{t}=0$ we deduce that $u_{t}=-\frac{1}{2}G_{t}^{-1}\mu_{t}$, since $G_{t}$ is positive definite and hence has a well-defined inverse. Noting that $\sum_{t=0}^{T-1}u_{t}=b-a$ we can re-write the iterative formulas as
    \begin{equation*}
        \begin{split}
            &\mu_{t} = \mu_{T-1}+\sum_{j>t}^{T-1}\nu_{j}, \quad t=0,\dots,T-1, \\
            &u_{t} = -\frac{1}{2}G_{t}^{-1}\left(\mu_{T-1}+\sum_{j>t}^{T-1}\nu_{j}\right), \quad t=0,\dots,T-1, \\
            &-\frac{1}{2}\sum_{t=0}^{T-1}G_{t}^{-1}\left(\mu_{T-1}+\sum_{j>t+1}^{T-1}\nu_{j}\right)=b-a.
        \end{split}
    \end{equation*}
    The last equation can be used to solve for $\mu_{T-1}$ utilizing that the sum of (inverse) positive definite matrices is positive definite and has a well-defined inverse
    \begin{equation*}
        \mu_{T-1} = \left(\sum_{t=0}^{T-1}G_{t}^{-1}\right)^{-1}\left(2(a-b)-\sum_{t=0}^{T-1}G_{t}^{-1}\sum_{t>j}^{T-1}\nu_{j}\right),
    \end{equation*}
    which proves the result.
\end{proof}
\begin{algorithm}[!ht]
    \caption{GEORCE for Riemannian manifolds}
    \label{al:georce}
    \begin{algorithmic}[1]
        \State \textbf{Input}: $\mathrm{tol}$, $T$
        \State \textbf{Output}: Geodesic estimate $x_{0:T}$
        \State Set $x_{t}^{(0)} \leftarrow a+\frac{b-a}{T}t$, $u_{t}^{(0)} \leftarrow \frac{b-a}{T}$ for $t=0.,\dots,T-1$ and $i \leftarrow 0$
        \While{$\norm{\restr{\nabla_{y}E(y)}{y=x_{t}^{(i)}}}_{2} > \mathrm{tol}$}
        \State $G_{t} \leftarrow G\left(x_{t}^{(i)}\right)$ for $t=0,\dots,T-1$
        \State $\nu_{t} \leftarrow \restr{\nabla_{y}\left(u_{t}^{(i)}G\left(y\right)u_{t}^{(i)}\right)}{y=x_{t}^{(i)}}$ for $t=,1\dots,T-1$
        \State $\mu_{T-1} \leftarrow \left(\sum_{t=0}^{T-1}G_{t}^{-1}\right)^{-1}\left(2(a-b)-\sum_{t=0}^{T-1}G_{t}^{-1}\sum_{t>j}^{T-1}\nu_{j}\right)$
        \State $\tilde{u}_{t} \leftarrow -\frac{1}{2}G_{t}^{-1}\left(\mu_{T-1}+\sum_{j>t}^{T-1}\nu_{j}\right)$ for $t=0,\dots,T-1$
        \State Using line search find $\alpha^{*}$ for the following optimization problem with the discrete energy functional $E$
        \begin{equation*}
            \begin{split}
                \alpha^{*} = \argmin_{\alpha}\quad &E\left(\tilde{x}_{0:T}\right) \quad \text{(exact line search)} \\
                \text{s.t.} \quad &\tilde{x}_{t+1}=\tilde{x}_{t}+\alpha \tilde{u}_{t}+(1-\alpha)u_{t}^{(i)}, \quad t=0,\dots,T-1, \\
                &\tilde{x}_{0}=a.
            \end{split}
        \end{equation*}
        \State Set $u_{t}^{(i+1)} \leftarrow \alpha^{*}\tilde{u}_{t}+(1-\alpha^{*})u_{t}^{(i)}$ for $t=0,\dots,T-1$
        \State Set $x_{t+1}^{(i+1)} \leftarrow x_{t}^{(i+1)}+u_{t}^{(i+1)}$ for $t=0,\dots,T-1$
        \State $i \leftarrow i+1$
        \EndWhile
        \State return $x_{t}$ for $t=0,\dots,T-1$
    \end{algorithmic}
\end{algorithm}
Proposition~\ref{prop:update_scheme} immediately leads to an iterative update of $\left(x_{t}^{(i)}, u_{t}^{(i)}\right)$ to estimate the geodesic, which we denote the \textit{GEORCE}-algorithm. \textit{GEORCE} is described in pseudo-code in Algorithm~\ref{al:georce}. Since the computations in \textit{GEORCE} take place in a local chart, it is implicitly assumed that the computations are well-defined within the chart. Let $\phi: U \rightarrow \mathcal{M}$ denote the chart, where $U \subseteq \mathbb{R}^{d}$ is an open set. It is assumed that the initialization of the curve in Algorithm~\ref{al:georce} is defined within the chart, i.e., $x_{t}^{(0)} \in U$ for all $t$. If the updated solution $\tilde{x} \notin U$ in iteration $i+1$, then soft line-search (see below) is applied to determine a point in $U$ on the line between $\left(x^{(i)},u^{(i)}\right)$ and $\left(\tilde{x},\tilde{u}\right)$, since $U$ is an open set. Note also that if the computations of \textit{GEORCE} are close to the boundary of $U$, one can change the chart assuming overlapping charts in the atlas of the manifold. Thus, in practice, it is not a problem for \textit{GEORCE} if $U$ is only a subset of $\mathbb{R}^{d}$. For all computations in this paper, we will not change chart or assume that we can leave the domain $U$ during the computations, but such cases can easily be modified into the algorithm as described above.

As we show below, \textit{GEORCE} exhibits global convergence, which means that the algorithm will converge to a local minimum. If there are multiple local minima, then \textit{GEORCE} will not necessarily find the global minimum. To investigate whether \textit{GEORCE} has found the global minimum, one can apply \textit{GEORCE} for different initial curves to compute locally-length minimizing curves and choose the curve with the shortest length as the global minimum. In Appendix~\ref{ap:init_curve}, we illustrate the effect of different choices of initialization curves and how it affects the solution using \textit{GEORCE}.

\paragraph{Line search and stopping criteria}
We find that an appropriate step size is necessary to ensure global convergence for \textit{GEORCE} (See Proposition~\ref{prop:global_convergence}), and therefore we update $u_{t}^{(i+1)} \leftarrow \alpha u_{t}^{(i+1)}+(1-\alpha)u_{t}^{(i)}$ in iteration $i$. Here, $\alpha$ denotes the step size, which we estimate with line search. One approach is to estimate the optimal step size in each iteration, but, in our experience, this is unnecessarily expensive. We instead use  \textit{soft} line search, which merely ensures that the objective function decreases in each iteration \citep{luenberger2008linear}. We implement this using backtracking, where an initial step size is multiplied by a decay rate, $0<\rho<1$, until some condition is met, e.g., the Wolfe condition \citep{wolfe_condition}, the Armijo condition \citep{armijo1966minimization}, or the curvature condition\footnote{Note that the term ``curvature condition'' is rather misleading in a Riemannian context as it refers to conditions on the derivative of the objective function and not the Riemannian curvature.} \citep{wolfe_condition}. In practice, we find that the Armijo condition \citep{armijo1966minimization} with $\rho=0.5$ is sufficient for most manifolds, although the decay rate can be fine-tuned to increase the performance of \textit{GEORCE}.

We threshold the $\ell^{2}$-norm of the gradient of the discrete energy functional as a stopping criteria for \textit{GEORCE}. In all empirical comparisons, identical stopping criteria are applied to all optimization algorithms.

\begin{table}[h!]
    \resizebox{\textwidth}{!}{
    \begin{tabular}{c|c c c c c}
        \hline
        \textbf{Algorithm} & \textbf{Global convergence} & \textbf{Local convergence} & \textbf{Gradient} & \textbf{Hessian} & \textbf{Complexity} \\
        \hline
        \textbf{Gradient descent} & \checkmark & Linear & \checkmark & \crossmark & $\mathcal{O}\left(Td^{2}\right)$ \\
        \textbf{Quasi-Newton} & \crossmark & Super linear & \checkmark & \crossmark & $\mathcal{O}\left(T^{2}d^{2}\right)$ \\
        \textbf{Newton method} & \crossmark & Quadratic & \checkmark & \checkmark & $\mathcal{O}\left(Td^{3}\right)$ \\
        \textbf{GEORCE} & \checkmark & Asymptotic quadratic &  \checkmark & \crossmark & $\mathcal{O}\left(Td^{3}\right)$ \\
        \hline
    \end{tabular}
    }
    \caption{Convergence, complexity, and use of higher order derivatives for different optimization algorithms in each iteration, where $T$ is the number of grid points and $d$ is the manifold dimension. A checkmark indicates that the algorithm exhibits the property, while a cross indicates that it is not the case.}
    \label{tab:algorithm_comparison}
    \vspace{-3.5em}
\end{table}
\paragraph{Comparison with other optimization methods}
\textit{GEORCE} has complexity $\mathcal{O}\left(Td^{3}\right)$ due to matrix inversions along the discretized curve but scales only linearly in $T$, unlike e.g.\@ quasi-Newton methods (Table~\ref{tab:algorithm_comparison}).
Although the computational complexity in the manifold dimension is of a lower order for quasi-Newton methods and gradient descent, \textit{GEORCE} exhibits faster convergence. Since the Hessian of the discretized energy functional in Eq.~\ref{eq:disc_const_energy} is sparse, the update step in the Newton method can be simplified in complexity compared to the standard Newton method that has complexity $\mathcal{O}\left(T^{3}d^{3}\right)$ (see Appendix~\ref{ap:newton_method}). However, the sparse Newton step requires second-order derivatives, unlike \textit{GEORCE}, which only requires first-order derivatives of the discretized energy. Note that in Table~\ref{tab:algorithm_comparison}, we assume that the iterations for the different methods remain well-defined in the chosen chart.

\subsection{Convergence results}
In this section, we show that \textit{GEORCE} exhibits global convergence, i.e., converges to a local minimum, and that \textit{GEORCE} exhibits local asymptotic quadratic convergence under certain regularity of the discretized energy functional. To simplify the notation, we let $\langle \cdot, \cdot \rangle$, $\norm{\cdot}$ and $\nabla$ denote the Euclidean inner product, norm and gradient in $\mathbb{R}^{d}$, respectively.

\paragraph{Global convergence}
We will prove that \textit{GEORCE} converges to a (local) minimum. To prove this, we will use the fact that the discretized energy functional is smooth and hence locally Lipschitz. This implies that the first order Taylor approximation of the discretized energy functional can be written as
\begin{equation} \label{eq:locally_lipschitz_taylor}
    \Delta E = \langle \nabla E(z_{0}), \Delta z \rangle + \mathcal{O}\left(\Delta z\right)\norm{\Delta z},
\end{equation}
Thus, the term $\mathcal{O}\left(\Delta z\right)\norm{\Delta z}$ can be re-written as $\mathcal{O}\left(\norm{\Delta z}^{2}\right)$ locally.

To prove global convergence for the algorithm \textit{GEORCE} we first show existence and uniqueness by proving two lemmas. Secondly, we prove that in each iteration the discretized energy functional is decreasing, unless a (local) minimum has been found.

For notation set $x^{(i)}=(a,x_{1}^{(i)}, \dots, x_{T-1}^{(i)}, b)$ and $u^{(i)} = (u_{0}^{(i)}, u_{1}^{(i)}, \dots, u_{T-2}^{(i)}, u_{T-1}^{(i)})$ as the solution for iteration $i$ such that $(x^{(i+1)}, u^{(i+1)})$ is the solution using the update scheme in proposition~\ref{prop:update_scheme}.

\begin{lemma}\label{lemma:global_conv_feasible}
    Assume that $\left(x^{(i)}, u^{(i)}\right)$ is a feasible solution, then 
    \begin{enumerate}
        \item There exists a unique solution $\left(x^{(i+1)}, u^{(i+1)}\right)$ to the system of equations in Eq.~\ref{eq:energy_update_schem} based on $\left(x^{(i)}, u^{(i)}\right)$.
        \item All linear combinations $(1-\alpha)\left(x^{(i)},u^{(i)}\right)+\alpha\left(x^{(i+1)},u^{(i+1)}\right)$ for $0 < \alpha \leq 1$ are feasible solutions.
    \end{enumerate}
\end{lemma}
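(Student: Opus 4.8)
The plan is to prove the two claims separately, reading claim~(1) as a statement about a linear system and claim~(2) as a statement about affine constraints. For claim~(1), I would first recall that, because $G$ is a Riemannian metric, each fixed matrix $G_t = G(x_t^{(i)})$ is symmetric positive definite, so $G_t^{-1}$ exists and is itself positive definite. Viewing Eq.~\ref{eq:energy_zero_point_problem} (whose closed-form solution is Eq.~\ref{eq:energy_update_schem}) as a linear system in the unknowns $u_{0:T-1}$ and $\mu_{0:T-1}$, with the $\nu_j$ and $G_t$ frozen at iteration $i$, I would back-substitute: the costate recursion $\mu_t = \mu_{t-1}-\nu_t$ expresses every $\mu_t$ through $\mu_{T-1}$ and the $\nu_j$; the stationarity relation $2G_t u_t + \mu_t = 0$ then gives $u_t = -\tfrac12 G_t^{-1}\mu_t$; and inserting this into the terminal constraint $\sum_{t=0}^{T-1} u_t = b-a$ leaves a single linear equation for $\mu_{T-1}$ whose coefficient matrix is $-\tfrac12\sum_{t=0}^{T-1} G_t^{-1}$. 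Since a sum of positive definite matrices is positive definite and hence invertible, this equation has the unique solution stated in Proposition~\ref{prop:update_scheme}, and the uniqueness of $\mu_{T-1}$ propagates to unique $\mu_t$, unique $u_t$, and finally unique $x_t^{(i+1)}$ via the recursion $x_{t+1}=x_t+u_t$ started from $x_0=a$. This simultaneously establishes existence (the explicit formulas are well defined and solve the system) and uniqueness.

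For claim~(2), the key preliminary step is to observe that the newly produced iterate $(x^{(i+1)},u^{(i+1)})$ is itself feasible: the formula for $\mu_{T-1}$ was chosen precisely so that $\sum_{t=0}^{T-1} u_t^{(i+1)} = b-a$, whence $x_T^{(i+1)} = a + \sum_{t=0}^{T-1} u_t^{(i+1)} = b$, while $x_0^{(i+1)}=a$ and the dynamics $x_{t+1}^{(i+1)}=x_t^{(i+1)}+u_t^{(i+1)}$ hold by construction. I would then note that every constraint defining feasibility ($x_0=a$, $x_T=b$, and $x_{t+1}=x_t+u_t$) is affine in the pair $(x,u)$, so the feasible set is an affine subspace and is therefore closed under affine, and in particular convex, combinations. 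Concretely, writing $\bar x_t = (1-\alpha)x_t^{(i)}+\alpha x_t^{(i+1)}$ and $\bar u_t = (1-\alpha)u_t^{(i)}+\alpha u_t^{(i+1)}$, a one-line check yields $\bar x_0 = a$, $\bar x_T = b$, and $\bar x_{t+1} = \bar x_t + \bar u_t$ for every $\alpha$, hence for all $0 < \alpha \le 1$.

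I do not expect a serious obstacle, since both parts reduce to elementary linear algebra. The one point requiring care is verifying the terminal boundary condition $x_T^{(i+1)} = b$ for the new iterate, as this is the single place where the specific choice of $\mu_{T-1}$ is used rather than the generic invertibility argument. A secondary subtlety is feasibility with respect to the chart domain $U$: if one takes feasibility to include $x_t \in U$, then closedness under convex combinations relies on the standing assumption that the chart admits straight-line interpolation (so that $U$ may be taken convex along the relevant segments), which is exactly the Euclidean-interpolation assumption stated for the algorithm; otherwise the soft line-search described around Algorithm~\ref{al:georce} shrinks $\alpha$ until the combination lies in $U$.
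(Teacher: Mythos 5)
Your proposal is correct and follows essentially the same route as the paper: claim (1) is reduced to invertibility of the positive definite matrices $G_t$ and of $\sum_{t}G_t^{-1}$ (your back-substitution merely spells out what the paper delegates to the proof of Proposition~\ref{prop:update_scheme}), and claim (2) is the same direct verification that the affine feasibility constraints are preserved under convex combinations, with the chart-domain caveat handled, as in the paper, by openness of $U$ and the soft line-search.
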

\begin{proof}
    Since the matrices, $\left\{G\left(x_{t}^{(i)}\right)\right\}_{t=0}^{T}$ are positive definite, then the matrices are regular with unique inverse, and so are the sum of the inverse matrices. This means that there exists an unique solution to the update scheme in proposition~\ref{prop:update_scheme} proving 1.

    Initially, assume that $\left(x^{(i+1)},u^{(i+1)}\right)$ is well-defined in the local chart.

    Since the solution $\left(x^{(i+1)}, u^{(i+1)}\right)$ is also a feasible solution for the update scheme in proposition~\ref{prop:update_scheme}, then the linear combination of two feasible solutions will also be a feasible solution, i.e.
    \begin{equation*}
        \sum_{t=0}^{T-1}u_{t}^{(j)}=(b-a).
    \end{equation*}
    The new solution based on the linear combination gives
    \begin{equation*}
        \begin{split}
            &(1-\alpha)\sum_{t=0}^{T-1}u_{t}^{(i)}+\alpha\sum_{t=0}^{T-1}u_{t}^{(i+1)}=(1-\alpha)(b-a)+\alpha(b-a)=(b-a) \\
            &(1-\alpha)x^{(i)}+\alpha x^{(i+1)}  = (1-\alpha)\left(\alpha, x_{1}^{(i)}, \dots, x_{T-1}^{(i)},b\right) +\alpha \left(a, x_{1}^{(i+1)}, \dots, x_{T-1}^{(i+1)},b\right) \\
            &= \left(a, (1-\alpha)x_{1}^{(i)}+\alpha x_{1}^{(i+1)},\dots,(1-\alpha)x_{T-1}^{(i)}+\alpha x_{T-1}^{(i+1)},b\right),
        \end{split}
    \end{equation*}
    where for any $t=0,\dots,T-1$
    \begin{equation*}
        \begin{split}
            (1-\alpha)x_{t}^{(i)}+\alpha x_{t}^{(i+1)} &= (1-\alpha)\left(a+\sum_{j=0}^{t-1}u_{j}^{(i)}\right)+\alpha\left(a+\sum_{j=0}^{t-1}u_{j}^{(i+1)}\right) \\
            &= a+\sum_{j=0}^{t-1}\left((1-\alpha)u_{j}^{(i)}+\alpha u_{j}^{(i+1)}\right).
        \end{split}
    \end{equation*}
    This shows that the linear combination in the state is feasible as it produces feasible state variables in terms of start and end point. Furthermore, each state variable is feasible, if they were determined from the linear combination of the control vectors, which proves that the linear combinations of the state and control variables are also feasible solutions.

    If $\left(x^{(i+1)},u^{(i+1)}\right)$ is not well-defined in the local chart, then the line-search in \textit{GEORCE} will be able to determine a point belonging to the manifold and that will be on the line between $\left(x^{(i+1)},u^{(i+1)}\right)$ and $\left(x^{(i)},u^{(i)}\right)$ as the local chart is defined on an open set and $x^{(i)}$ is well-defined in the local chart. The new solution is then a feasible solution following the argument above.
\end{proof}
\begin{lemma}\label{lemma:global_taylor}
    Let $\left\{x_{t}^{(i)}, u_{t}^{(i)}\right\}_{t=0}^{T}$ denote the feasible solution after iteration $i$ in \textit{GEORCE}. If $\left\{x_{t}^{(i)}, u_{t}^{(i)}\right\}_{t=0}^{T}$ is not a (local) minimum point, then the feasible solution from iteration $\left\{x_{t}^{(i+1)}, u_{t}^{(i+1)}\right\}_{t=0}^{T}$ will decrease the objective function in the sense that there exists an $\eta>0$ such that for all $\alpha$ with $0<\alpha\leq\eta\leq 1$, then
    \begin{equation*}
        E\left(x^{(i)}+\alpha \left(x^{(i+1)}-x^{(i)}\right), u^{(i)}+\alpha \left(u^{(i+1)}-u^{(i)}\right)\right) < E\left(x^{(i)}, u^{(i)}\right),
    \end{equation*}
    where $x^{(i)}=\left(a,x_{1}^{(i)}, \dots, x_{T-1}^{(i)}, b\right)$ and $u^{(i)} = \left(u_{0}^{(i)}, u_{1}^{(i)}, \dots, u_{T-2}^{(i)}, u_{T-1}^{(i)}\right)$.
\end{lemma}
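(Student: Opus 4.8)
The plan is to show that the full GEORCE step direction $\Delta z := \big(x^{(i+1)}-x^{(i)},\, u^{(i+1)}-u^{(i)}\big)$ is a strict descent direction for $E$ at the current iterate, and then to invoke the first–order Taylor expansion \eqref{eq:locally_lipschitz_taylor} to conclude that a sufficiently small step strictly decreases the objective. Write $\Delta u_t := u_t^{(i+1)}-u_t^{(i)}$ and $\Delta x_t := x_t^{(i+1)}-x_t^{(i)}$. Since both $(x^{(i)},u^{(i)})$ and $(x^{(i+1)},u^{(i+1)})$ are feasible by Lemma~\ref{lemma:global_conv_feasible}, the increments inherit the linear dynamics $\Delta x_t=\sum_{j<t}\Delta u_j$ and $\sum_{t=0}^{T-1}\Delta u_t=0$, and every convex combination $(1-\alpha)(x^{(i)},u^{(i)})+\alpha(x^{(i+1)},u^{(i+1)})$ is feasible for $0<\alpha\le 1$, so $E$ is only ever evaluated along admissible curves.

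First I would compute the directional derivative $\langle \nabla E(x^{(i)},u^{(i)}),\Delta z\rangle$. Because $x_0=a$ and $x_T=b$ are fixed and $x_t$ enters only the $t$-th summand of $E$ through $G(x_t)$, the gradient splits into $\partial E/\partial u_t = 2G_t u_t^{(i)}$ and $\partial E/\partial x_t = \nu_t$ for $t=1,\dots,T-1$, where $G_t$ and $\nu_t$ are exactly the quantities frozen in \eqref{eq:fixed_variables}. Substituting $\Delta x_t=\sum_{j<t}\Delta u_j$ and reindexing the $x$-part of the sum would give the identity $\langle \nabla E,\Delta z\rangle = \sum_{t=0}^{T-1}\big(2G_t u_t^{(i)}+\sum_{j>t}\nu_j\big)^{\top}\Delta u_t$. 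The purpose of this manipulation is that the right-hand side is precisely the directional derivative, along $\Delta u$, of the strictly convex quadratic surrogate whose constrained minimizer is the GEORCE update: the frozen system \eqref{eq:energy_zero_point_problem} is the optimality condition for minimizing $Q(u):=\sum_t u_t^{\top}G_t u_t + \sum_{t\ge 1}\nu_t^{\top} x_t$ over the affine feasible set $\{\sum_t u_t=b-a\}$, a problem with Hessian $2\,\mathrm{blockdiag}(G_0,\dots,G_{T-1})$, which is positive definite.

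The key step, and the main obstacle, is to turn this identity into a strict inequality. Since $u^{(i+1)}$ is the minimizer of the strictly convex $Q$ over the affine set, restricting $Q$ to the feasible subspace and writing $u^{(i)}=u^{(i+1)}-\Delta u$ with $\Delta u$ in the constraint tangent space $\{\sum_t \Delta u_t = 0\}$ yields $\langle \nabla E,\Delta z\rangle = \langle\nabla Q(u^{(i)}),\Delta u\rangle = -2\sum_{t=0}^{T-1}\Delta u_t^{\top}G_t\,\Delta u_t$, which is strictly negative whenever $\Delta u\neq 0$ because each $G_t$ is positive definite. It then remains to argue $\Delta u\neq 0$ under the hypothesis: if $\Delta u=0$ then $(x^{(i)},u^{(i)})$ is a fixed point of the update, and since $G_t,\nu_t$ are evaluated at the current iterate, such a fixed point satisfies the full necessary conditions \eqref{eq:energy_opt_condtions}, i.e.\ it is a critical point of $E$ and hence, by the fine-discretization assumption, a local minimum, contradicting the hypothesis. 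With $D:=-\langle\nabla E,\Delta z\rangle>0$ fixed, \eqref{eq:locally_lipschitz_taylor} gives $E\big((x^{(i)},u^{(i)})+\alpha\Delta z\big)-E(x^{(i)},u^{(i)})=-\alpha D+\mathcal{O}\big(\alpha^2\norm{\Delta z}^2\big)$, so there exists $\eta\in(0,1]$ for which the linear term dominates and the difference is negative for all $0<\alpha\le\eta$, which is the claim. The only delicate points are the bookkeeping in the reindexing that produces the surrogate-gradient identity and the equivalence between ``$\Delta u=0$'' and ``critical point of $E$''.
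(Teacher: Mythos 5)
Your proposal is correct, and it reaches exactly the same descent identity as the paper --- namely $\langle \nabla E(z^{(i)}), \Delta z\rangle = -2\sum_{t=0}^{T-1}\Delta u_t^{\top}G\bigl(x_t^{(i)}\bigr)\Delta u_t < 0$ --- followed by the same first-order Taylor argument to extract $\eta$. The difference is how you get there. The paper works directly with the Pontryagin multipliers: it substitutes the conditions $\nabla_{x_t}E = \mu_{t-1}-\mu_t$ and $\nabla_{u_t}E\bigl(x_t^{(i)},u_t^{(i+1)}\bigr) = -\mu_t$ into the Taylor expansion, telescopes the sums, kills the $\mu_{T-1}$ term with $\sum_t \Delta u_t = 0$, and then uses the cancellation $\mu_t + \nabla_{u_t}E\bigl(z^{(i)}\bigr) = -2G_t\Delta u_t$ (Eq.~\ref{eq:finsler_changed}). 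You instead recognize that the frozen system \eqref{eq:energy_zero_point_problem} is the KKT system of a strictly convex quadratic surrogate $Q$ over the affine feasible set, so the GEORCE step is the exact constrained minimizer of $Q$; the identity then falls out from stationarity of $\nabla Q\bigl(u^{(i+1)}\bigr)$ against the constraint tangent space plus the constant Hessian $2\,\mathrm{blockdiag}(G_0,\dots,G_{T-1})$. This majorize-minimize-style reading is more structural and arguably explains \emph{why} the multiplier bookkeeping in the paper collapses so cleanly; the paper's route, while more computational, is the one that transfers verbatim to the Finsler case in Appendix~\ref{ap:finsler_proof}, where the analogous cancellation is re-derived with the extra $\zeta_t$ terms. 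One further point in your favor: you make explicit the step the paper only asserts, namely that $\Delta u \neq 0$ under the hypothesis, by observing that $\Delta u = 0$ makes $\bigl(x^{(i)},u^{(i)}\bigr)$ a fixed point of the update, hence a solution of the full necessary conditions \eqref{eq:energy_opt_condtions}, hence a critical point of $E$, which the standing fine-discretization assumption upgrades to a local minimum --- contradicting the hypothesis. Your treatment of the remainder terms (collapsing them to $\mathcal{O}(\alpha^2\norm{\Delta z}^2)$ via local Lipschitzness of $\nabla E$) is also consistent with how the paper handles Eq.~\ref{eq:locally_lipschitz_taylor}.
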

\begin{proof}
    $E(x,u)$ is a smooth function. The first order Taylor approximation of the energy function $E(x,u)$ in Eq.~\ref{eq:disc_const_energy} in $(x,u)$ is
    \begin{equation*}
        \begin{split}
            \Delta E(x,u) &= \sum_{t=1}^{T-1}\left(\left\langle\restr{\nabla_{x_{t}}E(x_{t},u_{t})}{(x_{t},u_{t})=\left(x_{t}^{(i)}, u_{t}^{(i)}\right)}, \Delta x_{t}\right\rangle +\mathcal{O}(\Delta x_{t})\norm{\Delta x_{t}}\right) \\
            &+\sum_{t=0}^{T-1}\left(\left\langle \restr{\nabla_{u_{t}}E(x_{t},u_{t})}{(x_{t},u_{t})=\left(x_{t}^{(i)}, u_{t}^{(i)}\right)}, \Delta u_{t}\right\rangle+\mathcal{O}(\Delta u_{t})\norm{\Delta u_{t}}\right),
        \end{split}
    \end{equation*}
    where $\Delta u_{t} := u_{t}^{(i+1)}-u_{t}^{(i)}$ and $\Delta x_{t} := x_{t}^{(i+1)}-x_{t}^{(i)}$. By the optimally conditions in Eq.~\ref{eq:energy_opt_condtions} we have that
    \begin{equation} \label{eq:f_cond}
        \begin{split}
            \restr{\nabla_{x_{t}}E(x,u)}{(x,u)=\left(x_{t}^{(i)},u_{t}^{(i)}\right)} &= \mu_{t-1}-\mu_{t}, \quad t=1,\dots,T-1, \\
            \restr{\nabla_{u_{t}}E(x,u)}{(x_{t},u_{t})=\left(x_{t}^{(i)}, u_{t}^{(i+1)}\right)} &= -\mu_{t}, \quad t=0,\dots,T-1.
        \end{split}
    \end{equation}
    Inserting the optimality conditions in the Taylor expansion we have that
    \begin{equation*}
        \begin{split}
            \Delta E(x,u) &= \sum_{t=1}^{T-1}\left(\langle \mu_{t-1}-\mu_{t}, \Delta x_{t}\rangle +\mathcal{O}(\Delta x_{t})\norm{\Delta x_{t}}\right) \\
            &+\sum_{t=0}^{T-1}\left(\left\langle \restr{\nabla_{u_{t}}E(x,u)}{(x,u)=\left(x_{t}^{(i)},u_{t}^{(i)}\right)}, \Delta u_{t}\right\rangle+\mathcal{O}(\Delta u_{t})\norm{\Delta u_{t}}\right).
        \end{split}
    \end{equation*}
    Since $\Delta x_{t} = \sum_{j=0}^{t-1}\Delta u_{j}$, then
    \begin{equation*}
        \begin{split}
            \Delta E(x,u) &= \sum_{t=1}^{T-1}\left(\langle \mu_{t-1}-\mu_{t}, \sum_{j=0}^{t-1}\Delta u_{j}\rangle +\mathcal{O}\left(\sum_{j=0}^{t-1}\Delta u_{j}\right)\norm{\sum_{j=0}^{t-1}\Delta u_{j}}\right) \\
            &+\sum_{t=0}^{T-1}\left(\left\langle \restr{\nabla_{u_{t}}E(x,u)}{(x,u)=\left(x_{t}^{(i)},u_{t}^{(i)}\right)}, \Delta u_{t}\right\rangle+\mathcal{O}(\Delta u_{t})\norm{\Delta u_{t}}\right).
        \end{split}
    \end{equation*}
    Re-arranging the terms we get
    \begin{equation*}
        \begin{split}
            \Delta E(x,u) &= \sum_{t=0}^{T-2}\left\langle \mu_{t}, \sum_{j=0}^{t}\Delta u_{j}\right\rangle \\
            &-\sum_{t=1}^{T-1}\left(\left\langle \mu_{t}, \sum_{j=0}^{t-1}\Delta u_{j} \right\rangle+\mathcal{O}\left(\sum_{j=0}^{t-1}\Delta u_{j}\right)\norm{\sum_{j=0}^{t-1}\Delta u_{j}}\right) \\
            &+\sum_{t=0}^{T-1}\left(\left\langle \restr{\nabla_{u_{t}}E(x_{t},u_{t})}{(x_{t},u_{t})=\left(x_{t}^{(i)}, u_{t}^{(i)}\right)}, \Delta u_{t}\right\rangle+\mathcal{O}\left(\Delta u_{t}\right)\norm{u_{t}}\right) \\
            &= \langle \mu_{0}, \Delta u_{0} \rangle + \sum_{t=1}^{T-2}\langle \mu_{t}, \Delta u_{t}\rangle - \left\langle \mu_{T-1}, \sum_{j=0}^{T-1}\Delta u_{j}\right\rangle+\langle \mu_{T-1}, \Delta u_{T-1}\rangle \\
            &+ \sum_{t=0}^{T-1}\left(\left\langle \restr{\nabla_{u_{t}}E(x_{t},u_{t})}{(x_{t},u_{t})=\left(x_{t}^{(i)}, u_{t}^{(i)}\right)}, \Delta u_{t}\right\rangle+\mathcal{O}\left(\Delta u_{t}\right)\norm{\Delta u_{t}}\right)
        \end{split}
    \end{equation*}
    Since $\Delta u_{j} = u_{j}^{(i+1)}-u_{j}^{(i)}$ for $j=0,\dots,T-1$ and $\Delta x_{j}=x_{j}^{(i+1)}-x_{j}^{(i)}$ for $j=0,\dots,T$, and since the solution is feasible we have that $\sum_{j=0}^{T-1}u_{j}^{(i)}=b-a$ for any $i$, which implies that $\sum_{j=0}^{T-1}\Delta u_{j}=0$. This reduces the above expression to
    \begin{equation*}
        \begin{split}
            \Delta E(x,u) &= \sum_{t=0}^{T-1}\left(\langle \mu_{t}, \Delta u_{t}\rangle+\mathcal{O}\left(\sum_{j=0}^{t-1}\Delta u_{j}\right)\norm{\sum_{j=0}^{t-1}\Delta u_{j}}\right) \\
            &+ \sum_{t=0}^{T-1}\left(\left\langle \restr{\nabla_{u_{t}}E(x_{t},u_{t})}{(x_{t},u_{t})=\left(x_{t}^{(i)}, u_{t}^{(i)}\right)}, \Delta u_{t}\right\rangle +\mathcal{O}\left(\Delta u_{t}\right)\norm{\Delta u_{t}}\right) \\
            &= \sum_{t=0}^{T-1}\left\langle \mu_{t}+\restr{\nabla_{u_{t}}E(x,u)}{(x,u)=\left(x_{t}^{(i)},u_{t}^{(i)}\right)}, \Delta u_{t}\right\rangle \\
            &+\sum_{t=0}^{T-1}\mathcal{O}\left(\sum_{j=0}^{t-1}\Delta u_{j}\right)\norm{\sum_{j=0}^{t-1}\Delta u_{j}}+\sum_{t=0}^{T-1}\mathcal{O}\left(\Delta u_{t}\right)\norm{u_{t}}.
        \end{split}
    \end{equation*}
    By Eq.~\ref{eq:f_cond} we have
    \begin{equation}
        \begin{split}
            \mu_{t}+\restr{\nabla_{u_{t}}E(x_{t},u_{t})}{(x_{t},u_{t})=\left(x_{t}^{(i)}, u_{t}^{(i)}\right)} &= -\restr{\nabla_{u_{t}}E(x_{t},u_{t})}{(x_{t},u_{t})=\left(x_{t}^{(i)}, u_{t}^{(i+1)}\right)} \\
            &+\restr{\nabla_{u_{t}}E(x,u)}{(x,u)=\left(x_{t}^{(i)},u_{t}^{(i)}\right)} \\
            &= -2 G\left(x_{t}^{(i)}\right)\left(u_{t}^{(i+1)}-u_{t}^{(i)}\right),
        \end{split}
        \label{eq:finsler_changed}
    \end{equation}
    which implies that
    \begin{equation} \label{eq:energy_taylor}
        \begin{split}
            \Delta E(x,u) &= \sum_{t=0}^{T-1}\left\langle -2G\left(x_{t}^{(i)}\right)\Delta u_{t}, \Delta u_{t}\right\rangle \\
            &+\sum_{t=0}^{T-1}\left(\mathcal{O}\left(\sum_{j=0}^{t-1}\Delta u_{j}\right)\norm{\sum_{j=0}^{t-1}\Delta u_{j}}+\mathcal{O}\left(\Delta u_{t}\right)\norm{\Delta u_{t}}\right).
        \end{split}
    \end{equation}
    Since $G\left(x_{t}^{(i)}\right)$ is positive definite, the first summation is negative assuming that at least one non-zero vector $\{\Delta u_{t}\}_{t=0}^{T-1}$, which is the case, since we have assumed that $\left(x_{t}^{(i)},u_{t}^{(i)}\right)$ is not a (local) minimum. Now scale all vectors, $\left\{\Delta u_{t}\right\}_{t=0}^{T-1}$ with a scalar $0<\alpha<1$. Note that the term, $-2G\left(x_{t}^{(i)}\right)\Delta u_{t}$, is unaffected by this, since it equals $\mu_{t}+\nabla_{u_{t}}E(x,u)\left(x_{t}^{(i)}, u_{t}^{(i)}\right)$, which is independent of $\alpha$. The first order Taylor order approximation is
    \begin{equation*}
        \begin{split}
            \Delta E(x,u) &= \sum_{t=0}^{T-1}\left\langle -2 G\left(x_{t}^{(i)}\right)\Delta u_{t}, \alpha \Delta u_{t}\right\rangle \\
            &+\sum_{t=0}^{T-1}\left(\mathcal{O}\left(\sum_{j=0}^{t-1}\alpha \Delta u_{j}\right)\norm{\sum_{j=0}^{t-1}\alpha \Delta u_{j}}+\mathcal{O}\left(\alpha \Delta u_{t}\right)\norm{\alpha \Delta u_{t}}\right).
        \end{split}
    \end{equation*}
    In the limit it now follows that there exists an $\eta>0$ such that $\frac{\Delta E(x,u)}{\alpha}<0$ for some $\alpha$ with $0 < \alpha \leq \eta \leq 1$, which implies that $\Delta E(x,u) < 0$.
\end{proof}
\begin{proposition} \label{prop:global_convergence}
    Let $E^{(i)}$ be the value of the discretized energy functional for the solution after iteration $i$ (with line search) in \textit{GEORCE}. If the starting point $\left(x^{(0)}, u^{(0)}\right)$ is feasible, then the series $\left\{E^{(i)}\right\}_{i>0}$ will converge to a (local) minimum. 
\end{proposition}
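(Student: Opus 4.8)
The plan is to treat \textit{GEORCE} as a feasible descent method and apply the standard monotone-convergence argument for such methods. First I would invoke Lemma~\ref{lemma:global_conv_feasible} to guarantee that every iterate $(x^{(i)},u^{(i)})$ produced by the update scheme of Proposition~\ref{prop:update_scheme} — and every convex combination used by the line search — remains feasible, so the whole trajectory lives in the feasible set on which $E$ is defined. Since $E$ in Eq.~\ref{eq:disc_const_energy} is a sum of quadratic forms $u_{t}^{\top}G(x_{t})u_{t}$ with each $G(x_{t})$ positive definite, $E\geq 0$, and hence the sequence $\{E^{(i)}\}$ is bounded below. By Lemma~\ref{lemma:global_taylor}, as long as the current iterate is not a (local) minimum there is a step size $\alpha$ with $0<\alpha\leq\eta$ that strictly decreases the energy; since the soft (Armijo) line search is guaranteed to accept such a step, the sequence $\{E^{(i)}\}$ is non-increasing. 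A bounded-below non-increasing real sequence converges, so $E^{(i)}\to E^{\ast}$ for some $E^{\ast}\geq 0$, and in particular $E^{(i)}-E^{(i+1)}\to 0$.

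The remaining work is to show that the limit is attained at a (local) minimum rather than at a point where further descent is still possible. The key quantity is the directional derivative of $E$ along the \textit{GEORCE} search direction, which by Eqs.~\eqref{eq:finsler_changed}--\eqref{eq:energy_taylor} equals $-2\sum_{t=0}^{T-1}\Delta u_{t}^{\top}G(x_{t}^{(i)})\Delta u_{t}$ with $\Delta u_{t}=u_{t}^{(i+1)}-u_{t}^{(i)}$. Because each $G(x_{t}^{(i)})$ is positive definite, this directional derivative vanishes if and only if $\Delta u_{t}=0$ for all $t$; moreover, by Eq.~\eqref{eq:finsler_changed} the residual $-2G(x_{t}^{(i)})\Delta u_{t}$ equals $\mu_{t}+\nabla_{u_{t}}E$, so $\Delta u_{t}=0$ for all $t$ is exactly the stationarity condition of Eq.~\eqref{eq:energy_opt_condtions}. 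Hence the descent quantity $q^{(i)}:=\sum_{t}\Delta u_{t}^{\top}G(x_{t}^{(i)})\Delta u_{t}$ measures how far the iterate is from a stationary point.

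I would then close the argument by contradiction. Suppose $q^{(i)}\not\to 0$, so that $q^{(i)}\geq\delta>0$ along a subsequence. Combining the Armijo acceptance condition with the first-order expansion in Eq.~\eqref{eq:energy_taylor}, the accepted step produces a decrease bounded below by a fixed multiple of $\alpha^{(i)}q^{(i)}$, and backtracking guarantees that the accepted step size $\alpha^{(i)}$ stays bounded away from zero, because the quadratic remainder in Eq.~\eqref{eq:energy_taylor} is controlled locally by the Lipschitz estimate~\eqref{eq:locally_lipschitz_taylor} on the sublevel set $\{E\leq E^{(0)}\}$. This yields $E^{(i)}-E^{(i+1)}\geq c\,\delta>0$ infinitely often, contradicting $E^{(i)}-E^{(i+1)}\to 0$. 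Therefore $q^{(i)}\to 0$, the stationarity residual vanishes in the limit, and by the standing assumption that any critical point of the finely discretized energy is a local (or global) minimum, $\{E^{(i)}\}$ converges to the energy of a (local) minimum.

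The main obstacle I anticipate is the last paragraph: making the sufficient-decrease estimate rigorous. Concretely, one must show that the backtracked step size does not collapse to zero, which requires a uniform Lipschitz bound on $\nabla E$ over the sublevel set so that the Armijo condition holds for every $\alpha$ below a fixed threshold, and this implicitly needs the sublevel set to be compact (equivalently, the iterates to remain in a fixed compact subset of the chart $U$). Everything else is routine monotone-sequence bookkeeping; the delicate point is coupling the line-search guarantee to the descent magnitude $q^{(i)}$ uniformly in $i$.
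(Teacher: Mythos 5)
Your proposal is correct in structure and, for its first half, identical to the paper's: both use Lemma~\ref{lemma:global_conv_feasible} to keep all iterates (and line-search combinations) feasible, Lemma~\ref{lemma:global_taylor} for strict decrease away from a local minimum, and the bound $E\geq 0$ to conclude that the monotone sequence $\{E^{(i)}\}$ converges. Where you genuinely diverge is the final step, showing the limit value is attained at a (local) minimum. The paper argues directly at an assumed limit point $(\hat{x},\hat{u})$ of the iterates: if that point were not a local minimum, Lemma~\ref{lemma:global_taylor} applied \emph{at the limit} would produce a strictly smaller energy, contradicting convergence of $\{E^{(i)}\}$ to $\hat{E}$. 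You instead run the textbook Armijo-descent argument: you isolate the descent quantity $q^{(i)}=\sum_{t}\Delta u_{t}^{\top}G(x_{t}^{(i)})\Delta u_{t}$, observe via Eq.~\eqref{eq:finsler_changed} that it vanishes exactly at stationary points of Eq.~\eqref{eq:energy_opt_condtions}, and derive a contradiction with $E^{(i)}-E^{(i+1)}\to 0$ if $q^{(i)}$ stays bounded away from zero along a subsequence. Each route buys something: the paper's is shorter, but it tacitly assumes the iterates themselves converge (energy convergence alone does not give this) and that descent ``from the limit'' transfers to nearby iterates, which would need a continuity or uniformity argument to be airtight. Your route avoids assuming iterate convergence and targets stationarity directly, which is the standard rigorous pattern for descent methods; its cost is exactly the obstacle you flag yourself — a uniform Lipschitz bound on $\nabla E$ over a compact sublevel set of the chart $U$, so that backtracking accepts step sizes bounded away from zero. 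Neither writeup fully closes its respective technical gap, but your identification of $q^{(i)}$ as the stationarity residual, and your explicit appeal to the paper's standing assumption that critical points of the fine discretization are local minima, make the mechanism of convergence more transparent than the paper's own contradiction-at-the-limit argument.
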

\begin{proof}
    If the starting point $\left(x^{(0)}, u^{(0)}\right)$ in \textit{GEORCE} is feasible, then the points $\left(x^{(i)}, u^{(i)}\right)$ after each iteration (with line-search) in \textit{GEORCE} will also be feasible solutions by lemma~\ref{lemma:global_conv_feasible}.

    The discretized energy functional is a positive function, i.e. a lower bound is at least 0. Assume that the series $\left\{E^{(i)}\right\}_{i>0}$ is not converging to a (local) minimum. From lemma~\ref{lemma:global_taylor} it follows that $\left\{E^{(i)}\right\}_{i>0}$ is decreasing for increasing $i$, and since $E^{(i)}$ has a lower bound, the series $\left\{E^{(i)}\right\}_{i}$ can not be non-converging. Assume now that the convergence value for $\left\{E^{(i)}\right\}_{i}$, denoted $\hat{E}$, is not a (local) minimum, and denote the convergence point $(\hat{x}, \hat{u})$. According to lemma~\ref{lemma:global_taylor} \textit{GEORCE} will from the solution $(\hat{x}, \hat{u})$ in the following iteration produce a new point $(\tilde{x},\tilde{u})$ such that $E(\tilde{x},\tilde{u})<\hat{E}$, which is contradiction as the series $\left\{E^{(i)}\right\}_{i}$ is assumed to be converging to $\hat{E}$. The series, $\left\{E^{(i)}\right\}_{i}$, will therefore converge, and the convergence point is a (local) minimum.
\end{proof}

It is not possible for \textit{GEORCE} to jump between two local minimum points. To see this, assume that there are two local minimum points that have the same discretized energy, and assume that \textit{GEORCE} is jumping between the two points, i.e., there is convergence to a local minimum in terms of the discretized energy functional according to Proposition~\ref{prop:global_convergence}, but not to a single local minimum point. In the \textit{GEORCE} iteration, the line search will try to improve the discretized energy functional by finding a point closer to one of the two minimum points. Either this leads to a new point with improved discretized energy, or \textit{GEORCE} stops concluding that a local minimum point has been found since no improvements in discretized energy were possible by line-search in accordance with Lemma~\ref{lemma:global_taylor}. But this contradicts the assumption that \textit{GEORCE} jumps between two minimum points with the same discretized energy. Thus, \textit{GEORCE} converges to only one local minimum point.

\paragraph{Local convergence}
We will show that \textit{GEORCE} under certain regularity of the discretized energy functional asymptotically exhibits local convergence in the sense that
\begin{equation*}
    \begin{split}
        &\exists \epsilon>0 \, \exists C>0: \quad \norm{z^{(i+1)}-z^{(i)}} \leq f(T,\norm{z^{(i)}-z^{*}}), \\
        &\text{where }\lim_{T \rightarrow \infty} f\left(T,\norm{z^{(i)}-z^{*}}\right) = C \norm{z^{(i)}-z^{*}}^{2},
    \end{split}
\end{equation*}
We will assume the following regularity of the discretized energy functional
\begin{assumption}[Local convergence] \label{assum:quad_conv_assumptions}
    We assume the following regarding the discretized energy functional.
    \begin{itemize}
        \item We assume that the discretized energy functional $E(z)$ is locally strictly convex in the (local) minimum point $z^{*}=\left(x^{*},u^{*}\right)$ in the sense that
        \begin{equation*}
            \exists \epsilon>0: \, \forall z \in B_{\epsilon}\left(z^{*}\right), z \neq z^{*}: \, \forall \alpha ]0,1[: \, E\left(\left(1-\alpha\right)z+ \alpha z^{*}\right) < \left(1-\alpha\right)E(z)+\alpha E\left(z^{*}\right),
        \end{equation*}
        where $B_{\epsilon}=\left\{z \, |\, \norm{z-z^{*}} < \epsilon\right\}$.
        \item Assume that the discretized energy functional $E(z)$ is smooth, and consider the first order Taylor approximation of the discretized energy functional
        \begin{equation*}
            \Delta E = \langle \nabla E(z_{0}), \Delta z \rangle + \mathcal{O}\left(\Delta z\right)\norm{\Delta z},
        \end{equation*}
        Then, the term $\mathcal{O}\left(\Delta z\right)\norm{\Delta z}$ can be re-written as $\mathcal{O}\left(\norm{\Delta z}^{2}\right)$ locally.
    \end{itemize}
\end{assumption}
\begin{wrapfigure}[15]{r}{0.50\textwidth}
    \vspace{-7mm}
    \centering
    \includegraphics[width=0.50\textwidth]{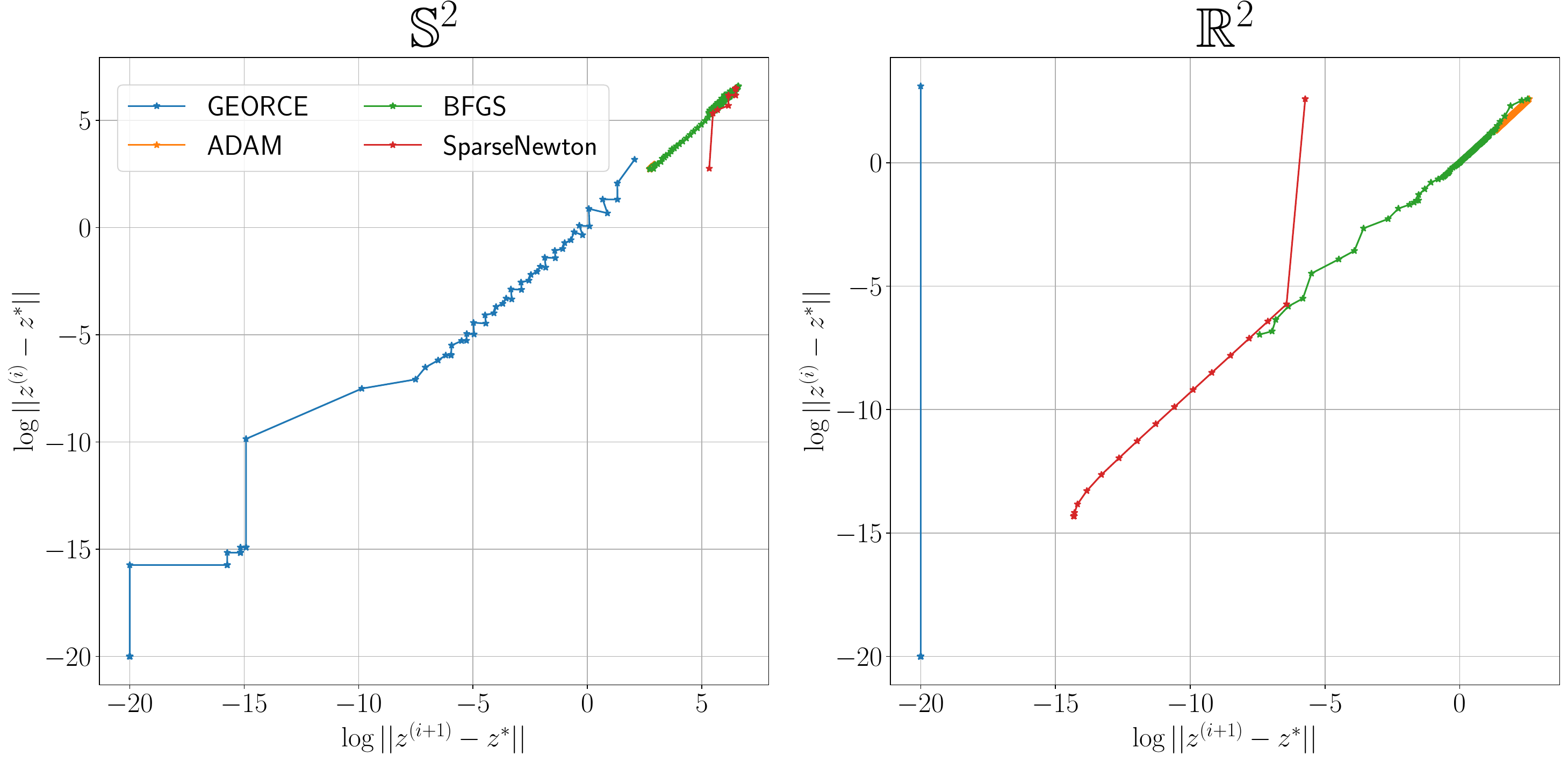}
    \vspace{-2mm}
    \caption{Illustration of the convergence of \textit{GEORCE} and baseline algorithms. We plot $\log \norm{z^{(i)}-z^{*}}$ over $\log \norm{z^{(i+1)}-z^{*}}$ for each iteration $i$ for the different methods. We use the convergence point of \textit{GEORCE} as $z^{*}$. If the norms are $0$, we set the value to $-20$. All initial curves are set to the same random deformation of a straight line similar to Appendix~\ref{ap:init_curve}.}
    \vspace{-10em}
    \label{fig:quadratic_convergence}
\end{wrapfigure}
Intuitively, it is expected that \textit{GEORCE} exhibits local asymptotic quadratic convergence. \textit{GEORCE} solves iteratively quadratic minimization problems by fixing the metric matrix function. Since \textit{GEORCE} exhibits global convergence, the metric matrix function in each iteration of \textit{GEORCE} will converge to the metric matrix function at the local minimum. Close to the local minimum, the metric matrix function will be approximately constant, since it is smooth and hence continuous. At that point, \textit{GEORCE} is simply solving a quadratic optimization problem, which is solved in one iteration similar to the Newton method, i.e,. has asymptotic quadratic convergence. It is therefore expected that when \textit{GEORCE} is sufficiently close to the minimum a large ``dip'' will occur. This is especially clear for the Euclidean space, where $G=I$ is independent of $x$. Therefore, \textit{GEORCE} will find the exact solution after one iteration, as it simply solves a quadratic minimization problem. It will be proved that a \textit{GEORCE} step without line search ($\alpha=1$) is equivalent to minimizing a quadratic polynomial similar to a Newton step, and that the difference between the two can be made sufficiently small for sufficiently high $T$ under certain regularity. We illustrate the convergence of \textit{GEORCE} and the baseline algorithms for $\mathbb{S}^{2}$ and $\mathbb{R}^{2}$ in Fig.~\ref{fig:quadratic_convergence}. 

\rowcolors{2}{white!10}{white}
\begin{lemma} \label{lemma:base_lemma}
    Let $z_{t}:=(x_{t},u_{t})$ and $\norm{u_{t}}^{2}_{x_{t}} = u_{t}^{\top}G(x_{t})u_{t}$, then the solution to the following optimal control problem is identical to a \textit{GEORCE} step without line-search, i.e. $\alpha=1$.
    \begin{equation} \label{eq:georce_op}
        \begin{split}
            \min_{\left(x_{t},u_{t}\right)} &\sum_{t=0}^{T-1}\norm{u_{t}^{(i)}}^{2}_{x_{t}^{(i)}} + \left(\nabla\norm{u_{t}^{(i)}}_{x_{t}^{(i)}}^{2}\right)^{\top}
            \begin{pmatrix}
                x_{t}-x_{t}^{(i)} \\
                u_{t}-u_{t}^{(i)}
            \end{pmatrix} \\
            \quad &+ \frac{1}{2}
            \begin{pmatrix}
                x_{t}-x_{t}^{(i)} & u_{t}-u_{t}^{(i)}
            \end{pmatrix}
            Q(x_{t}^{(i)},u_{t}^{(i)})
            \begin{pmatrix}
                x_{t}-x_{t}^{(i)} \\
                u_{t}-u_{t}^{(i)}
            \end{pmatrix} \\
            \text{s.t.} \quad &x_{t+1} = x_{t} + u_{t}, \quad t=0,\dots,T-1, \\
            &x_{0}=a, \, x_{T}=b, 
        \end{split}
    \end{equation}
    where
    \begin{equation*}
        Q(x_{t},u_{t}) = 
        \begin{pmatrix}
            \pmb{0} & \pmb{0} \\
            \pmb{0} & 2G(x_{t})
        \end{pmatrix}
    \end{equation*}
\end{lemma}
\begin{proof}
    The Hamiltonian function for Eq.~\ref{eq:georce_op} is
    \begin{equation*}
        \begin{split}
            H_{t}(x_{t},u_{t},\mu_{t}) &= \norm{u_{t}^{(i)}}^{2}_{x_{t}^{(i)}} + \left(\nabla \norm{u_{t}^{(i)}}^{2}_{x_{t}^{(i)}}\right)^{\top}
            \begin{pmatrix}
                x_{t}-x_{t}^{(i)} \\
                u_{t}-u_{t}^{(i)}
            \end{pmatrix} \\
            &+ \frac{1}{2}
            \begin{pmatrix}
                x_{t}-x_{t}^{(i)} & u_{t}-u_{t}^{(i)}
            \end{pmatrix}
            Q(x_{t}^{(i)},u_{t}^{(i)})
            \begin{pmatrix}
                x_{t}-x_{t}^{(i)} \\
                u_{t}-u_{t}^{(i)}
            \end{pmatrix}
            + \mu_{t}^{\top}(x_{t}+u_{t})
        \end{split}
    \end{equation*}
    Applying Pontryagins' maximum principle to Eq.~\ref{eq:georce_op} gives
    \begin{equation*}
        \begin{split}
            \min_{u_{t}} \quad &H_{t}(x_{t},u_{t},\mu_{t}) \\
            \text{s.t.} \quad &x_{t+1}=x_{t}+u_{t}, \quad t=0,\dots,T-1, \\
            &\nabla_{x_{t}}H_{t}(x_{t},u_{t},\mu_{t})=\mu_{t-1}, \quad t=1,\dots,T-1.
        \end{split}
    \end{equation*}
    As $H_{t}(x_{t},u_{t},\mu_{t})$ is strictly convex in $u_{t}$, then the optimal control $u_{t}^{*}$ is the stationary point to the Hamiltonian function, i.e.
    \begin{equation*}
        \begin{split}
            &\nabla_{u_{t}}H_{t}(x_{t},u_{t},\mu_{t}) = \nabla_{u_{t}}E(x_{t}^{(i)},u_{t}^{(i)}) + 2G(x_{t}^{(i)})(u_{t}-u_{t}^{(i)}) + \mu_{t} = 0 \\
            &\Leftrightarrow 2G(x_{t}^{(i)})u_{s}^{(i)} + \mu_{t} = 0, \quad t=0,\dots,T-1,
        \end{split}
    \end{equation*}
    since $\nabla_{u_{t}^{(i)}}\norm{u_{t}^{(i)}}^{2}_{2} = 2G(x_{t}^{(i)})u_{t}^{(i)}$. This is identical to the first equation in the equations for \textit{GEORCE}. The state equation is unchanged, and the co-state equation for the problem becomes
    \begin{equation*}
        \nabla_{x_{t}}H_{t}(x_{t},u_{t},\mu_{t}) = \nabla_{x_{t}}\norm{u_{t}}^{2}_{x_{t}} + \mu_{t} = \mu_{t-1}, \quad t=1,\dots,T-1.
    \end{equation*}
    which is identical to the co-state equation of \textit{GEORCE}.
\end{proof}

\begin{lemma} \label{lemma:k_mat}
    The solution to \textit{GEORCE} with no line-search ($\alpha=1$) can be formulated as a linear system of equations, i.e.
    \begin{equation*}
        K(T,z^{(i)})
        \begin{pmatrix}
            z \\
            \mu
        \end{pmatrix}
        = v,
    \end{equation*}
    where $K$ is a regular matrix and $z,\mu$ are vectors containing $(x_{t},u_{t})$ and $\mu_{t}$, respecitvely. 
\end{lemma}
\begin{proof}
    From Pontraygin's maximum principle the derived equations for \textit{GEORCE} are linear in $z_{t}$ and $\mu_{t}$ and can be expressed as a matrix-vector product. The vector $v$ includes the boundary values $a$ and $b$.

    Since $G(x)$ is strictly positive definite given $x$, then $(z,\mu)$ is unique, and therefore $K$ is a regular matrix, which is unique as proven in the update scheme of \textit{GEORCE}.
\end{proof}

\begin{corollary} \label{cor:minimum_cond}
    Consider the minimization problem
    \begin{equation*}
        \begin{split}
            \min_{z} \quad &E(z), \\
            \text{s.t.} \quad &Az = v,
        \end{split}
    \end{equation*}
    where $E$ is the discrete energy functional, $A$ is a constant matrix related to the boundary conditions and state equations and $v$ is a constant vector including the boundary values. Then at the minimum point $z^{*}$, the following condition is fulfilled
    \begin{equation*}
        \begin{split}
            &\exists\mu: \, \nabla E = -A^{\top}\mu, \\
            &Az =v
        \end{split}
    \end{equation*}
\end{corollary}
\begin{proof}
    The Lagrangian of the optimization problem is
    \begin{equation*}
        L(z,\mu) = E(z) + \mu^{\top}(Az-v).
    \end{equation*}
    Since $E$ is smooth, then the Karush-Kuhn-Tucker (KKT) conditions for a minimum point gives
    \begin{equation*}
        \begin{split}
            &\nabla E + A^{\top}\mu = 0, \\
            &Az = v.
        \end{split}
    \end{equation*}
\end{proof}

\begin{lemma} \label{lemma:convergence}
    Assume that for a local minimum solutions to the discrete energy functional satisfy
    \begin{equation*}
        \lim_{T \rightarrow \infty} \norm{u_{t}^{*}} = 0, \quad t=0,\dots,T-1,
    \end{equation*}
    where $\norm{\cdot}$ denotes the $\ell^{2}$-norm. Further assume that $G(\cdot)$ is a smooth matrix function. Then
    \begin{equation*}
        \lim_{T \rightarrow \infty} \norm{\nabla^{2}\norm{u_{t}^{*}}_{x_{t}^{*}}^{2} - Q(x_{t}^{*},u_{t}^{*})}_{\infty} = 0, \quad t=0,\dots,T-1,
    \end{equation*}
    where $\nabla^{2}$ denotes the Hessian, while $\norm{\cdot}_{\infty}$ denotes the $\ell^{\infty}$ norm.
\end{lemma}
\begin{proof}
    Since the discrete energy functional is smooth, consider the following derivatives for the Hessian of the discrete energy functional
    \begin{equation*}
        \begin{split}
            \norm{\nabla^{2}_{x_{t}^{*},x_{t}^{*}}\norm{u_{t}^{*}}_{x_{t}^{*}}^{2}}_{\infty} &= \norm{\nabla^{2}_{x_{t}^{*},x_{t}^{*}}(u_{t}^{\top}G(x_{t})u_{t})}_{\infty} \\
            &\leq \norm{\nabla^{2}_{x_{t}^{*},x_{t}^{*}}G(x_{t}^{*})}_{\infty}\norm{u_{t}^{*}}_{\infty} \\
            &\rightarrow 0 \quad \text{for }T \rightarrow \infty.
        \end{split}
    \end{equation*}
    \begin{equation*}
        \begin{split}
            \norm{\nabla^{2}_{x_{t},u_{t}}\norm{u_{t}}_{x_{t}}^{2}}_{\infty} &= \norm{\nabla_{x_{t}^{*}}(G(x_{t}^{*})u_{t}^{*})}_{\infty} \\
            &\leq \norm{G(x_{t}^{*})}_{\infty} \norm{u_{t}^{*}}_{\infty} \\
            &\rightarrow 0 \quad \text{for }T\rightarrow \infty.
        \end{split}
    \end{equation*}
    \begin{equation*}
        \nabla^{2}_{u_{t}^{*},u_{t}^{*}} \norm{u_{t}^{*}}_{x_{t}^{*}}^{2} = 2G(x_{t}^{*}).
    \end{equation*}
    Since
    \begin{equation*}
        \nabla^{2}\norm{u_{t}^{*}}_{x_{t}^{*}}^{2} - Q(x_{t}^{*},u_{t}^{*}) = 
        \begin{pmatrix}
            \nabla^{2}_{x_{t}^{*},x_{t}^{*}}\norm{u_{t}^{*}}_{x_{t}^{*}}^{2} & \nabla^{2}_{x_{t}^{*},u_{t}^{*}}\norm{u_{t}^{*}}_{x_{t}^{*}}^{2} \\
            \nabla^{2}_{u_{t}^{*},x_{t}^{*}}\norm{u_{t}^{*}}_{x_{t}^{*}}^{2} & \pmb{0}
        \end{pmatrix}
    \end{equation*}
    the result follows immediately.
\end{proof}
\begin{theorem} \label{thm:asymp_conv}
    Assume the following
    \begin{itemize}
        \item $G(\cdot)$ is smooth and thus locally Lipschitz continuous in the local minimum.
        \item Assume that 
        \begin{equation} \label{eq:assum_k_mat}
            \exists \epsilon>0\,\exists r>0 \, \forall T: \, \norm{K^{-1}(T,z^{i})}_{\infty} \leq r \quad \text{for} \quad z^{i} \in B_{\epsilon}(z^{*}).
        \end{equation}
        \item Assume that for local minimum solutions to the discrete energy functional that
        \begin{equation} \label{eq:u_convergence}
            \lim_{T \rightarrow \infty} \norm{u_{t}^{*}} = 0, \quad t=0,\dots,T-1.
        \end{equation}
    \end{itemize}
    Let $E$ denote the discrete functional. The Hessian of the objective function for the \textit{GEORCE}-optimization problem in Eq.~\ref{eq:georce_op} is denoted $\hat{Q}$.

    Then, \textit{GEORCE} exhibits asymptotic quadratic convergence, i.e.
    \begin{equation*}
        \begin{split}
            &\exists \epsilon>0 \, \exists C>0: \quad \norm{z^{(i+1)}-z^{(i)}} \leq f(T,\norm{z^{(i)}-z^{*}}), \\
            &\text{where }\lim_{T \rightarrow \infty} f\left(T,\norm{z^{(i)}-z^{*}}\right) = C \norm{z^{(i)}-z^{*}}^{2},
        \end{split}
    \end{equation*}
    for some smooth function $f$.
\end{theorem}

\begin{proof}
    Since $E(z)$ is a smooth function, consider the first order Taylor approximation for the gradient to $E(z)$ in the local minimum point $z^{*}$
    \begin{equation*}
        \nabla E(z^{(i)}) = \nabla E(z^{*}) + \nabla^{2}E(z^{*})(z^{i}-z^{*}) + \mathcal{O}\left(\norm{z^{(i)-z^{*}}}^{2}\right),
    \end{equation*}
    where $z^{(i)} \in B_{\epsilon}(z^{*})$. Consider a \textit{GEORCE} step from $z^{(i)}$, then the KKT conditions in Corrollary~\ref{cor:minimum_cond} on the equivalent optimization problem in Eq.~\ref{eq:georce_op} gives the following conditions for the solution $z^{(i+1)}$ from \textit{GEORCE}

    \begin{figure}[t!]
    \centering
    \includegraphics[width=1.0\textwidth]{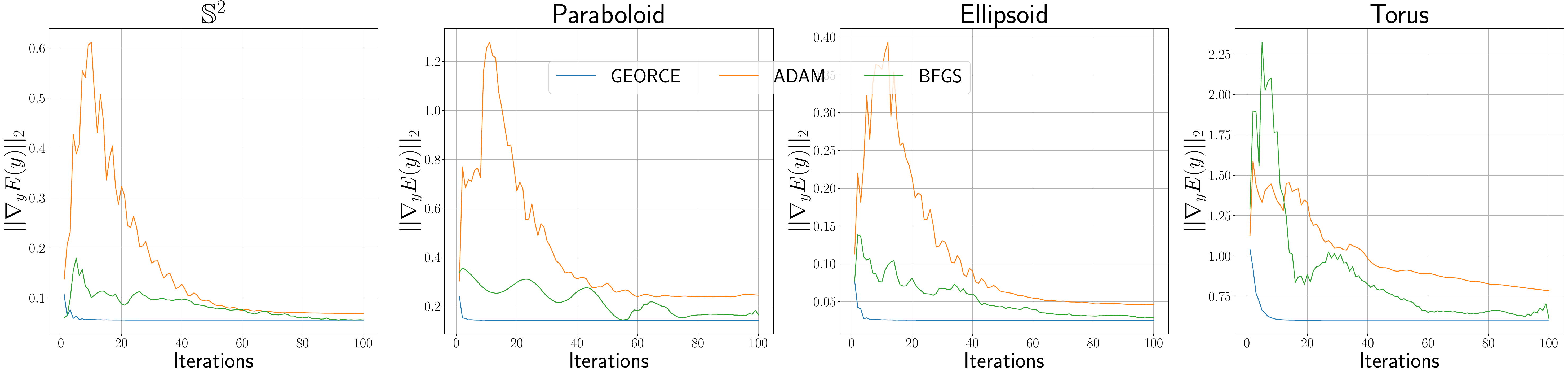}
    \caption{The figure shows the $\ell^{2}$-norm of the gradient of Eq.~\ref{eq:disc_const_energy} for each iteration for \textit{GEORCE} and alternative methods applied to construct geodesics for $100$ iterations for four different manifolds corresponding to Fig.~\ref{fig:synthetic_riemannian_geodesics}. We see that the $\ell^{2}$-norm of the gradient of Eq.~\ref{eq:disc_const_energy} for \textit{GEORCE} converges considerably faster than for alternative algorithms.}
    \label{fig:synthetic_riemannian_gradients}
    \vspace{-1.em}
\end{figure}

    \begin{equation*}
        \nabla E(z^{(i)}) + \hat{Q}(z^{(i)})(z^{(i+1)}-z^{i}) = -A^{\top}\mu^{(i+1)}.
    \end{equation*}
    Inserting the first order Taylor approximation into the KKT condition for the \textit{GEORCE}-step, a rearranging of terms gives the following equation
    \begin{equation*}
        \hat{Q}(z^{(i)})(z^{(i+1)}-z^{(i)}) + A^{\top}\mu^{(i+1)} = -\nabla E(z^{*}) - \nabla^{2}E(z^{*})(z^{(i)}-z^{*}) - \mathcal{O}\left(\norm{z^{(i+1)}-z^{*}}^{2}\right).
    \end{equation*}
    Let $e^{(i)} = z^{i}-z^{*}$. Utilizing the optimality condition in Corolarry~\ref{cor:minimum_cond} for $\nabla E(z^{*}) = -A^{\top}\mu^{*}$ and that $z^{(i+1)}-z^{(i)}=e^{(i+1)}-e^{(i)}$ and rearranging terms gives the following equation
    \begin{equation*}
        \hat{Q}(z^{(i)})e^{(i+1)} + A^{\top}(\mu^{(i+1)}-\mu^{(*)}) = \left(\hat{Q}(z^{(i)})-\nabla^{2}E(z^{*})\right)e^{(i)} + \mathcal{O}\left(\norm{e^{(i)}}^{2}\right).
    \end{equation*}
    The error term related to the Hessian can be decomposed as follows
    \begin{equation*}
        \left(\hat{Q}(z^{(i)})-\nabla^{2}E(z^{*})\right)e^{(i)} = \left(\hat{Q}(z^{(i)})-\hat{Q}(z^{*})\right)e^{(i)} + \left(\hat{Q}(z^{*}) - \nabla^{2}E(z^{*})\right)e^{(i)}.
    \end{equation*}
    As $\hat{Q}(z)$ is assumed Lipschitz continuous $\left(\hat{Q}(z^{(i)})-\hat{Q}(z^{*})\right)e^{(i)} = \mathcal{O}\left(\norm{e^{(i)}}^{2}\right)$, and the second term becomes an error term $\Delta (T)^{*} e^{(i)} = \left(\hat{Q}(z^{*}) - \nabla^{2}E(z^{*})\right)e^{i}$.
    
    As both $z^{(i+1)}$ and $z^{*}$ are feasible solutions to the optmization problem $Ae^{(i+1)}=0$, the previous equation can now be formulated as
    \begin{equation} \label{eq:dummy_eq_for_proof}
        \begin{pmatrix}
            \hat{Q}(z^{i}) & A^{\top} \\
            A & 0
        \end{pmatrix}
        \begin{pmatrix}
            e^{(i+1)} \\
            \mu^{(i+1)} - \mu^{(*)}
        \end{pmatrix}
        =
        \begin{pmatrix}
            \Delta(T)e^{(i)} + \mathcal{O}\left(\norm{e^{(i)}}^{2}\right) \\
            0
        \end{pmatrix}
    \end{equation}
    The matrix on the left hand side above is equivalent to $K(T,z^{(i)})$ in Lemma~\ref{lemma:k_mat} and is therefore regular. Utilizing the assumption in Eq.~\ref{eq:assum_k_mat} together with Eq.~\ref{eq:dummy_eq_for_proof}, then
    \begin{equation*}
        \exists M>0, \, \forall T>0: \, \norm{e^{(i+1)}} \leq M \left(\Delta(T)\norm{e^{(i)}} + \mathcal{O}\left(\norm{e^{(i)}}^{2}\right)\right).
    \end{equation*}
    Utilizing the assumption in Eq.~\ref{eq:u_convergence} and Lemma~\ref{lemma:convergence}, we have that
    \begin{equation*}
        \lim_{T \rightarrow \infty}\norm{\Delta (T)} = \lim_{T \rightarrow \infty}\norm{\hat{Q}(z^{*})-\nabla^{2}E(z^{*})}_{\infty} = 0.
    \end{equation*}
    Since Hessians of each term of the energy functional fulfill in line with Lemma~\ref{lemma:convergence} that
    \begin{equation*}
        \lim_{T \rightarrow \infty} \norm{\nabla^{2}E(x_{t}^{*},u_{t}^{*}) - Q(x_{t}^{*},u_{t}^{*})}_{\infty} = 0, \quad t=0,\dots,T-1.
    \end{equation*}
    This implies that in the limit for $T \rightarrow \infty$
    \begin{equation*}
        \norm{e^{(i+1)}} \leq \mathcal{O}\left(\norm{e^{(i)}}^{2}\right),
    \end{equation*}
    which implies that
    \begin{equation*}
        \exists C>0: \, \norm{e^{(i+1)}} \leq C \norm{e^{(i)}}^{2}.
    \end{equation*}
\end{proof}

\begin{wrapfigure}[15]{r}{0.50\textwidth}
    \vspace{-.5em}
    \centering
    \includegraphics[width=0.50\textwidth]{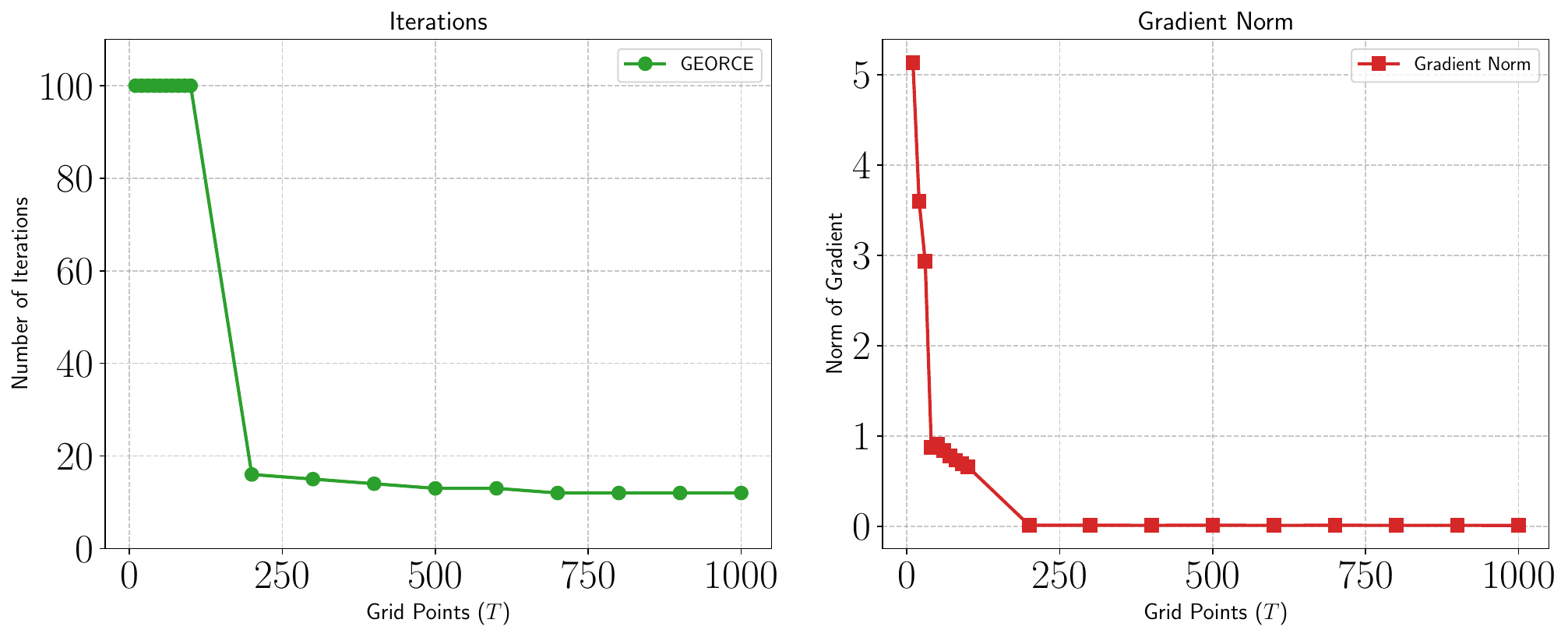}
    \caption{The asymptotic convergence of \textit{GEORCE}, where we show the number of iterations and the norm of the gradient as a function of the grid size. The result is for computing the geodesic for two points on $\mathbb{S}^{2}$. We use a tolerance of $10^{-2}$ for convergence, and we scale the discrete energy functional by the number of grid points, so that the tolerance is invariant to the number of grid points. The maximum number of iterations is set to $100$.}
    \label{fig:asymptotic_convergence}
    \vspace{-2.0em}
\end{wrapfigure}

The proof of Theorem~\ref{thm:asymp_conv} shows that in the limit of $T$ approaching infinity, \textit{GEORCE} has locally quadratic convergence, and it also shows that for increasing $T$, the error term controlling the "linear" convergence component becomes smaller. This means that the quadratic term will have more weights, i.e., the convergence speed will gradually improve by increasing $T$. We illustrate the convergence of \textit{GEORCE} for different values of grid size in Fig.~\ref{fig:asymptotic_convergence}, where we use the gradient of the discretized energy functional scaled by $T$ to make it invariant to the grid size.

Fig.~\ref{fig:synthetic_riemannian_gradients} shows the convergence by plotting the $\ell^{2}$-norm of the gradient of the discretized energy functional in each iteration for four different manifolds. Note that by formulating the geodesic boundary problem as a control problem, we estimate the discretized tangent vectors along the curve $\{u_{t}\}_{t=0}^{T}$. Modulo scaling $u_{0}$ will in the limit of a sufficiently fine grid correspond to the logarithmic map, $\mathrm{Log}_{a}b$.

\rowcolors{2}{gray!10}{white}

\section{Extension to Finslerian geometry}
A Finsler geometry can be seen as a generalization of Riemannian geometry, where the tangent spaces are not equipped with an quadratic form metric but with a Finsler metric instead. A Finsler metric, $F: \mathcal{M} \times T\mathcal{M} \rightarrow \mathbb{R}_{+}$, where $\mathcal{M}$ is a differentiable manifold, is a Minkowski norm in the sense that $F$ is $C^{\infty}$ on $T\mathcal{M}\setminus \{0\}$, $F(x,cv)=cF(x,v)$ for all $v \in T\mathcal{M}$, $x \in \mathcal{M}$ and $c>0$ and for any $v \in T\mathcal{M}$, then $\frac{\partial^{2}F}{\partial v^{i}\partial v^{j}}(x,v)$ is positive definite \citep[Chapter 2]{ohta2021comparison}. 
\begin{wrapfigure}[15]{r}{0.50\textwidth}
    \vspace{-6mm}
    \centering
    \includegraphics[width=0.50\textwidth]{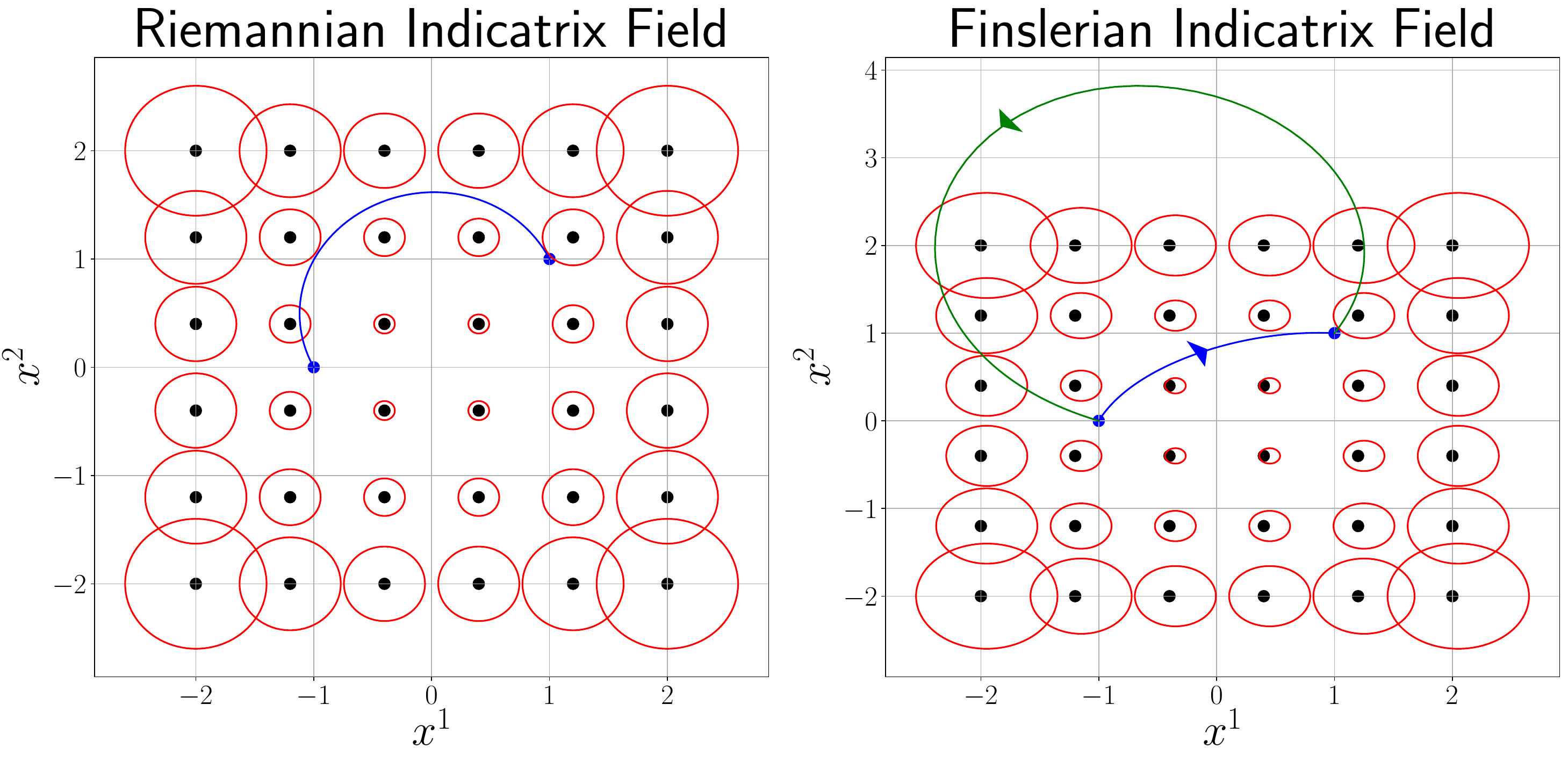}
    \vspace{-5mm}
    \caption{The red circles shows the indicatrix field, $\{v \in T_{x}\mathcal{M} \,|\,F(x,v)=1\}$, in the chart of the sphere used in the Riemannian case in Fig.~\ref{fig:synthetic_riemannian_geodesics} and its corresponding center is marked with black dot. The left figure is the Riemannian case, while the right figure is the Finslerian case corresponding to a force field making the indicatrix-field become non-centered ellipsis for the geodesics (blue and green), where the arrow indicates the direction of the curve.}
    \vspace{0.0em}
    \label{fig:finsler_indicatrix_field}
\end{wrapfigure}
Geodesics for a Finsler manifold can be found equivalently to the Riemannian case by minimizing the energy functional, $\int_{0}^{1}F^{2}\left(\gamma(t), \dot{\gamma}(t)\right) \,\dif t$, on a suitable set of curves connecting the two points thereby approximating a geodesic $\gamma$, \citep[Chapter 3]{ohta2021comparison}. In this way, the indicatrix field for Finslerian manifolds consists of non-centered ellipses unlike the Riemannian case, meaning that the distance is not symmetric for Finslerian manifolds. We illustrate this in Fig.~\ref{fig:finsler_indicatrix_field} for the local chart for the sphere used in Fig.~\ref{fig:synthetic_riemannian_geodesics}, where we shift the indicatrix fields by a wind-field on the form $(0,0.75)$.

\textit{GEORCE} is similarly applicable to Finsler geometry. First, we note that the energy functional can be written as \citep[Page 20]{ohta2021comparison}
\begin{equation} \label{eq:finsler_fundamental_energy}
    \frac{1}{2}\int_{0}^{1}F^{2}\left(\gamma(t), \dot{\gamma}(t)\right) \,\dif t = \int_{0}^{1}\dot{\gamma}(t)^{\top}G\left(\gamma(t), \dot{\gamma}(t)\right)\dot{\gamma}(t) \,\dif t,
\end{equation}
where $G(x,v) = \frac{1}{2}\frac{\partial^{2}F^{2}}{\partial v^{i} \partial v^{j}}\left(x,v\right)$ denotes the fundamental tensor in local coordinates\footnote{In literature, the fundamental tensor is often denoted with a lower case $g$. To have the notation consistent with the Riemannian case, we opt to use an upper case $G$ instead.}, which is positive definite \citep[Page 19]{ohta2021comparison}. Unlike the Riemannian case, the fundamental tensor, $G$, is now a function of both the position and velocity, which implies the update formulas in Proposition~\ref{prop:update_scheme}. Similar to Riemannian case, we can formulate the discrete energy functional for the Finsler case as the following discretized energy functional in Eq.~\ref{eq:finsler_fundamental_energy} becomes
\begin{equation}
    \begin{split}
        E_{F}(x_{0:T}) := \quad \min_{x_{0:T}} \quad &\sum_{t=0}^{T-1} (x_{t+1}-x_{t})^{\top}G(x_{t},x_{t+1}-x_{t})(x_{t+1}-x_{t}) \\
        \text{s.t.} \quad &x_{0}=a,x_{T}=b,
    \end{split}
\end{equation}
where the corresponding discrete control problem becomes
\begin{equation} \label{eq:finsler_disc_const_energy}
    \begin{split}
        \min_{(x_{1:T-1},u_{0:T-1})} \quad &\sum_{t=0}^{T-1} u_{t}^{\top}G\left(x_{t}, u_{t}\right)u_{t}, \quad t=0,\dots,T-1,\\
        \text{s.t.} \quad &x_{t+1}=x_{t}+u_{t} \\
        &x_{0}=a,x_{T}=b,
    \end{split}
\end{equation}
Since, the fundamental tensor, $G$, depends on both the state, $x_{0:T}$, and control variable $u_{0:T}$, the necessary conditions in Proposition~\ref{prop:riemann_cond} have to be modified slightly to take this into account. Taking this into account and following the same approach as in the Riemannian case, we get the following update scheme in the Finslerian case (see Appendix~\ref{ap:finsler_update} for details)
\begin{equation} \label{eq:finsler_energy_update_schem}
    \begin{split}
        &\mu_{T-1} = \left(\sum_{t=0}^{T-1}G_{t}^{-1}\right)^{-1}\left(2(a-b)-\sum_{t=0}^{T-1}G_{t}^{-1}\left(\zeta_{t}+\sum_{t>j}^{T-1}\nu_{j}\right)\right), \\
        &u_{t} = -\frac{1}{2}G_{t}^{-1}\left(\mu_{T-1}+\zeta_{t}+\sum_{j>t}^{T-1}\nu_{j}\right), \quad t=0,\dots,T-1, \\
        &x_{t+1} = x_{t}+u_{t}, \quad t=0,\dots,T-1, \\
        &x_{0}=a,
    \end{split}
\end{equation}
where in iteration $i$ we fix
\begin{equation*}
    \begin{split}
        &\nu_{t} := \restr{\nabla_{y}\left(u_{t}^{\top}G\left(y,u_{t}\right)u_{t}\right)}{y=x_{t}^{(i)},u_{t}=u_{t}^{(i)}}, \quad t=1,\dots,T-1, \\
        &\zeta_{t} := \restr{\nabla_{y}\left(u_{t}^{\top}G\left(x_{t},y\right)u_{t}\right)}{x_{t}=x_{t}^{(i)},u_{t}=u_{t}^{(i)}, y=u_{t}^{(i)}}, \quad t=1,\dots,T-1, \\
        &G_{t} := G\left(x_{t}^{(i)}, u_{t}^{(i)}\right), \quad t=0,\dots,T-1.
    \end{split}
\end{equation*}
We see that the only extra term in \textit{GEORCE} for Finsler manifolds is $\{\zeta_{t}\}_{t=0}^{T-1}$. Using the modified update scheme in Eq.~\ref{eq:finsler_energy_update_schem}, we estimate geodesics for Finsler manifolds similar to Algorithm~\ref{al:georce}. For completeness, we show the algorithm in pseudo-code for the Finsler case in Appendix~\ref{ap:georce_al_finsler}.

Using the same technique as in the Riemannian case, we show that \textit{GEORCE} for Finsler manifolds has global convergence. We refer to the details in Appendix~\ref{ap:finsler_proof}.

Note that the proof for asymptotic local quadratic convergence in the Riemannian case is easily extended to the Finslerian case, since the state equations are unchanged and $G:=G(x_{t},u_{t})$ which simply corresponds to define a variable $z_{t}=(x_{t},u_{t})$. 

\clearpage
\section{Experiments} \label{sec:experiments}
\begin{figure}[t!]
    \centering
    \includegraphics[width=1.0\textwidth]{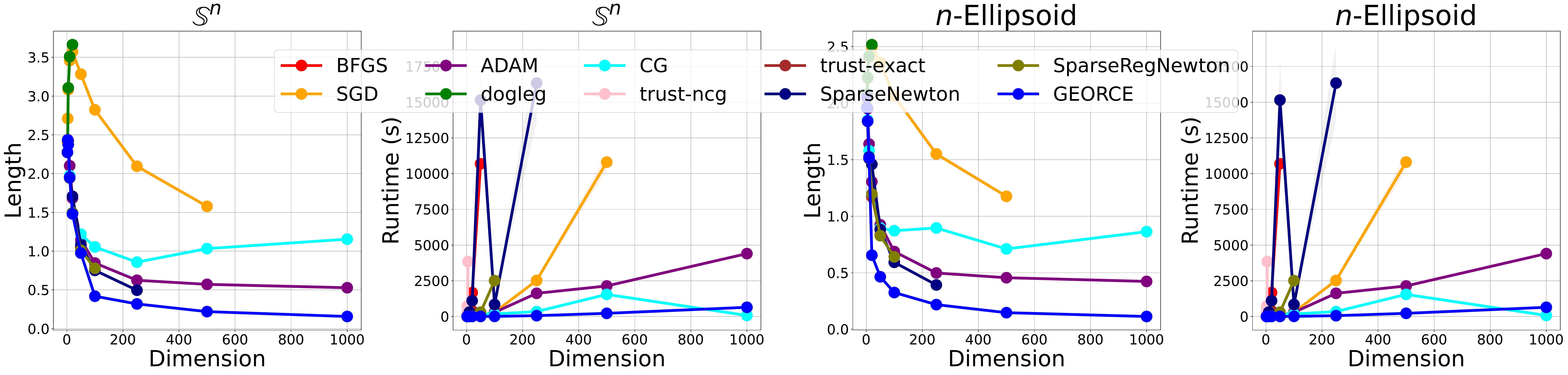}
    \caption{The first two figures display the length and runtime of geodesics for $\mathbb{S}^{n}$ estimated using different algorithms from Table~\ref{tab:riemmannian_comparison_table}. The latter row shows the length and runtime, respectively, for the $n$-Ellipsoid with half axes of $n$ equally spaced points between $0.5$ and $1.0$. The optimization solvers are described in Appendix~\ref{ap:hyper_parameters}.}
    \label{fig:sphere_ellipsoid_runtime}
    \vspace{-1.0em}
\end{figure}
\begin{sidewaystable}
    \begin{tabular*}{\textheight}{@{\extracolsep\fill}lcccccc}
        \toprule%
        & \multicolumn{2}{c}{\textbf{BFGS (T=100)}} & \multicolumn{2}{c}{\textbf{ADAM  (T=100)}} & \multicolumn{2}{c}{\textbf{GEORCE  (T=100)}} \\\cmidrule{2-3}\cmidrule{4-5}\cmidrule{6-7}%
        Riemannian Manifold & Length & Runtime & Length & Runtime & Length & Runtime \\
        \midrule
        $\mathbb{S}^{2}$ & $2.2742$ & $57.4164 \pm 0.0616$ & $2.2772$ & $0.2335 \pm 0.0005$ & $\pmb{2.2741}$ & $\pmb{0.0078} \pm \pmb{ 0.0000 }$ \\ 
        $\mathbb{S}^{3}$ & $\pmb{2.4342}$ & $67.4657 \pm 0.4874$ & $2.4377$ & $0.3475 \pm 0.0010$ & $2.4342$ & $\pmb{0.0635} \pm \pmb{ 0.0001 }$ \\ 
        $\mathbb{S}^{5}$ & $2.3757$ & $77.6975 \pm 0.0741$ & $2.4222$ & $1.9894 \pm 0.0188$ & $\pmb{2.3755}$ & $\pmb{0.2219} \pm \pmb{ 0.0001 }$ \\ 
        $\mathbb{S}^{10}$ & $1.9529$ & $150.0621 \pm 0.8919$ & $2.1016$ & $4.4208 \pm 0.0308$ & $\pmb{1.9523}$ & $\pmb{0.3184} \pm \pmb{ 0.0002 }$ \\ 
        $\mathbb{S}^{20}$ & $1.4810$ & $1068.8645 \pm 17.2001$ & $1.6839$ & $8.7097 \pm 0.0352$ & $\pmb{1.4808}$ & $\pmb{0.6732} \pm \pmb{ 0.0204 }$ \\ 
        $\mathbb{S}^{50}$ & $1.0464$ & $7350.6001 \pm 244.3737$ & $1.1463$ & $61.8926 \pm 0.3034$ & $\pmb{0.9749}$ & $\pmb{4.1016} \pm \pmb{ 0.0201 }$ \\ 
        $\mathbb{S}^{100}$ & $-$ & $-$ & $0.8466$ & $277.4217 \pm 0.2280$ & $\pmb{0.4187}$ & $\pmb{10.9208} \pm \pmb{ 0.0071 }$ \\ 
        $\mathbb{S}^{250}$ & $-$ & $-$ & $0.6244$ & $1424.9031 \pm 22.1592$ & $\pmb{0.3192}$ & $\pmb{78.2937} \pm \pmb{ 0.0991 }$ \\ 
        $\mathbb{S}^{500}$ & $-$ & $-$ & $0.5717$ & $2242.4697 \pm 17.9769$ & $\pmb{0.2198}$ & $\pmb{224.2761} \pm \pmb{ 0.4894 }$ \\ 
        $\mathbb{S}^{1000}$ & $-$ & $-$ & $0.5279$ & $4329.6621 \pm 38.0163$ & $\pmb{0.1573}$ & $\pmb{893.4682} \pm \pmb{ 7.3130 }$ \\ 
        \hline
        $\mathrm{E}\left( 2 \right)$ & $1.9595$ & $56.9650 \pm 0.4106$ & $1.9596$ & $0.2328 \pm 0.0001$ & $\pmb{1.9595}$ & $\pmb{0.0067} \pm \pmb{ 0.0001 }$ \\ 
        $\mathrm{E}\left( 3 \right)$ & $\pmb{2.0499}$ & $94.1769 \pm 0.7044$ & $2.0525$ & $0.3392 \pm 0.0024$ & $2.0501$ & $\pmb{0.0119} \pm \pmb{ 0.0000 }$ \\ 
        $\mathrm{E}\left( 5 \right)$ & $\pmb{1.8403}$ & $194.5403 \pm 1.3293$ & $1.9521$ & $2.2227 \pm 0.0186$ & $1.8410$ & $\pmb{0.4220} \pm \pmb{ 0.0038 }$ \\ 
        $\mathrm{E}\left( 10 \right)$ & $\pmb{1.5114}$ & $427.6278 \pm 4.1044$ & $1.6384$ & $4.5697 \pm 0.0541$ & $1.5139$ & $\pmb{0.4566} \pm \pmb{ 0.0016 }$ \\ 
        $\mathrm{E}\left( 20 \right)$ & $1.1795$ & $1669.2692 \pm 10.2877$ & $1.3049$ & $9.4335 \pm 0.0507$ & $\pmb{0.6548}$ & $\pmb{1.9786} \pm \pmb{ 0.0031 }$ \\ 
        $\mathrm{E}\left( 50 \right)$ & $0.8338$ & $10685.6133 \pm 321.6447$ & $0.9244$ & $73.5682 \pm 0.3306$ & $\pmb{0.4642}$ & $\pmb{2.4208} \pm \pmb{ 0.0222 }$ \\ 
        $\mathrm{E}\left( 100 \right)$ & $-$ & $-$ & $0.6872$ & $289.4196 \pm 2.5224$ & $\pmb{0.3249}$ & $\pmb{7.7128} \pm \pmb{ 0.0714 }$ \\ 
        $\mathrm{E}\left( 250 \right)$ & $-$ & $-$ & $0.4978$ & $1620.6428 \pm 16.9546$ & $\pmb{0.2178}$ & $\pmb{56.8527} \pm \pmb{ 0.1526 }$ \\ 
        $\mathrm{E}\left( 500 \right)$ & $-$ & $-$ & $0.4559$ & $2135.6899 \pm 30.5699$ & $\pmb{0.1467}$ & $\pmb{220.1656} \pm \pmb{ 0.9902 }$ \\ 
        $\mathrm{E}\left( 1000 \right)$ & $-$ & $-$ & $0.4232$ & $4398.9453 \pm 25.8738$ & $\pmb{0.1131}$ & $\pmb{641.7199} \pm \pmb{ 20.9901 }$ \\ 
        \hline
        $\mathbb{T}^{2}$ & $9.9851$ & $92.7400 \pm 3.1267$ & $9.9855$ & $0.1060 \pm 0.0002$ & $\pmb{9.9851}$ & $\pmb{0.0182} \pm \pmb{ 0.0003 }$ \\ 
        \hline
        $\mathbb{H}^{2}$ & $1.2499$ & $75.0079 \pm 0.1710$ & $1.2499$ & $0.0988 \pm 0.0001$ & $\pmb{1.2498}$ & $\pmb{0.0025} \pm \pmb{ 0.0001 }$ \\ 
        \hline
        $\mathcal{P}\left( 2 \right)$ & $\pmb{1.0669}$ & $88.2739 \pm 0.4152$ & $1.0879$ & $0.2611 \pm 0.0004$ & $1.0669$ & $\pmb{0.0141} \pm \pmb{ 0.0001 }$ \\ 
        $\mathcal{P}\left( 3 \right)$ & $\pmb{2.2639}$ & $168.5082 \pm 0.3039$ & $2.2896$ & $2.3780 \pm 0.0046$ & $2.2639$ & $\pmb{0.1442} \pm \pmb{ 0.0002 }$ \\ 
        \hline
        Gaussian Distribution & $\pmb{2.5952}$ & $47.6288 \pm 0.1359$ & $2.5956$ & $0.0167 \pm 0.0001$ & $2.5952$ & $\pmb{0.0015} \pm \pmb{ 0.0000 }$ \\ 
        Fr\'echet Distribution & $\pmb{1.1538}$ & $51.3063 \pm 0.0566$ & $1.1542$ & $0.0185 \pm 0.0001$ & $1.1539$ & $\pmb{0.0009} \pm \pmb{ 0.0000 }$ \\ 
        Cauchy Distribution & $1.6454$ & $36.5031 \pm 0.0437$ & $1.6466$ & $0.0177 \pm 0.0001$ & $\pmb{1.6452}$ & $\pmb{0.0020} \pm \pmb{ 0.0000 }$ \\ 
        Pareto Distribution & $0.8297$ & $39.0465 \pm 0.0471$ & $0.8336$ & $0.0168 \pm 0.0000$ & $\pmb{0.8297}$ & $\pmb{0.0007} \pm \pmb{ 0.0000 }$ \\ 
        \bottomrule
    \end{tabular*}
    \caption{The length of the estimated geodesics for the \textit{BFGS}-algorithm, \textit{ADAM} and \textit{GEORCE} on a CPU. The methods were terminated if the $\ell^{2}$-norm of the gradient was less than $10^{-4}$ or after $1,000$ iterations. $\mathrm{E}(n)$ denotes an Ellipsoid of dimension $n$ with half axes of $n$ equally spaced points between $0.5$ and $1.0$, while $\mathcal{P}(n)$ denotes the space of $n \times n$ symmetric positive definite matrices. When the computational time was longer than $24$ hours, the value is set to $-$.} \label{tab:riemmannian_comparison_table}
\end{sidewaystable}

\begin{sidewaystable}
    \begin{tabular*}{\textheight}{@{\extracolsep\fill}lcccccc}
        \toprule%
        & \multicolumn{2}{c}{\textbf{ADAM (T=100)}} & \multicolumn{2}{c}{\textbf{Sparse Newton  (T=100)}} & \multicolumn{2}{c}{\textbf{GEORCE  (T=100)}} \\\cmidrule{2-3}\cmidrule{4-5}\cmidrule{6-7}%
        Riemannian Manifold & Length & Runtime & Length & Runtime & Length & Runtime \\
        \midrule
        $\mathbb{S}^{2}$ & $2.2755$ & $0.2729 \pm 0.0016$ & $2.2742$ & $0.1417 \pm 0.0041$ & $\pmb{2.2741}$ & $\pmb{0.0163} \pm \pmb{ 0.0003 }$ \\ 
        $\mathbb{S}^{3}$ & $2.4376$ & $0.2890 \pm 0.0005$ & $\pmb{2.4342}$ & $0.2188 \pm 0.0039$ & $2.4342$ & $\pmb{0.0735} \pm \pmb{ 0.0018 }$ \\ 
        $\mathbb{S}^{5}$ & $2.4226$ & $0.3153 \pm 0.0027$ & $\pmb{2.3755}$ & $0.8657 \pm 0.0015$ & $2.3755$ & $\pmb{0.0494} \pm \pmb{ 0.0014 }$ \\ 
        $\mathbb{S}^{10}$ & $2.1031$ & $0.3052 \pm 0.0079$ & $1.9653$ & $117.4589 \pm 0.0140$ & $\pmb{1.9523}$ & $\pmb{0.0351} \pm \pmb{ 0.0001 }$ \\ 
        $\mathbb{S}^{20}$ & $1.6824$ & $0.3401 \pm 0.0117$ & $1.7069$ & $146.3024 \pm 0.0191$ & $\pmb{1.4808}$ & $\pmb{0.0483} \pm \pmb{ 0.0020 }$ \\ 
        $\mathbb{S}^{50}$ & $1.1456$ & $0.5339 \pm 0.0014$ & $1.0359$ & $310.2160 \pm 0.0200$ & $\pmb{0.9749}$ & $\pmb{0.0587} \pm \pmb{ 0.0023 }$ \\ 
        $\mathbb{S}^{100}$ & $0.8475$ & $1.2935 \pm 0.0014$ & $0.7522$ & $6.6464 \pm 0.0016$ & $\pmb{0.4188}$ & $\pmb{0.0717} \pm \pmb{ 0.0022 }$ \\ 
        $\mathbb{S}^{250}$ & $0.6251$ & $3.8386 \pm 0.0018$ & $0.4958$ & $98.6502 \pm 0.0147$ & $\pmb{0.3192}$ & $\pmb{0.3296} \pm \pmb{ 0.0005 }$ \\ 
        $\mathbb{S}^{500}$ & $0.5714$ & $8.2031 \pm 0.0030$ & $0.3787$ & $899.4549 \pm 0.0188$ & $\pmb{0.2198}$ & $\pmb{1.2925} \pm \pmb{ 0.0003 }$ \\ 
        $\mathbb{S}^{1000}$ & $0.5270$ & $19.2234 \pm 0.0712$ & $-$ & $-$ & $\pmb{0.1573}$ & $\pmb{6.8579} \pm \pmb{ 0.0023 }$ \\ 
        \hline
         & $1.9595$ & $0.2735 \pm 0.0074$ & $\pmb{1.9595}$ & $0.2177 \pm 0.0075$ & $1.9595$ & $\pmb{0.0138} \pm \pmb{ 0.0001 }$ \\ 
        $\mathrm{E}\left( 3 \right)$ & $2.0523$ & $0.2839 \pm 0.0056$ & $\pmb{2.0498}$ & $0.3705 \pm 0.0063$ & $2.0501$ & $\pmb{0.0167} \pm \pmb{ 0.0001 }$ \\ 
        $\mathrm{E}\left( 5 \right)$ & $1.9594$ & $0.2948 \pm 0.0116$ & $-$ & $-$ & $\pmb{1.8410}$ & $\pmb{0.0827} \pm \pmb{ 0.0036 }$ \\ 
        $\mathrm{E}\left( 10 \right)$ & $1.6421$ & $0.3097 \pm 0.0133$ & $\pmb{1.4247}$ & $116.6572 \pm 0.0186$ & $1.5139$ & $\pmb{0.0547} \pm \pmb{ 0.0019 }$ \\ 
        $\mathrm{E}\left( 20 \right)$ & $1.3052$ & $0.3875 \pm 0.0058$ & $1.4583$ & $148.6781 \pm 0.0307$ & $\pmb{0.6514}$ & $\pmb{0.1449} \pm \pmb{ 0.0054 }$ \\ 
        $\mathrm{E}\left( 50 \right)$ & $0.9245$ & $0.5704 \pm 0.0013$ & $0.8849$ & $316.9808 \pm 0.0179$ & $\pmb{0.4642}$ & $\pmb{0.0370} \pm \pmb{ 0.0014 }$ \\ 
        $\mathrm{E}\left( 100 \right)$ & $0.6873$ & $1.3063 \pm 0.0020$ & $0.5922$ & $5.2868 \pm 0.0007$ & $\pmb{0.3249}$ & $\pmb{0.0524} \pm \pmb{ 0.0002 }$ \\ 
        $\mathrm{E}\left( 250 \right)$ & $0.4974$ & $4.2084 \pm 0.0029$ & $0.3917$ & $84.6331 \pm 0.0439$ & $\pmb{0.2180}$ & $\pmb{0.2325} \pm \pmb{ 0.0001 }$ \\ 
        $\mathrm{E}\left( 500 \right)$ & $0.4559$ & $7.4772 \pm 0.0024$ & $0.3039$ & $827.7163 \pm 0.1003$ & $\pmb{0.1468}$ & $\pmb{1.2897} \pm \pmb{ 0.0004 }$ \\ 
        $\mathrm{E}\left( 1000 \right)$ & $0.4238$ & $19.8033 \pm 0.0970$ & $-$ & $-$ & $\pmb{0.1129}$ & $\pmb{5.1862} \pm \pmb{ 0.0071 }$ \\ 
        \hline
        $\mathbb{T}^{2}$ & $9.9853$ & $0.2890 \pm 0.0016$ & $9.9851$ & $0.3353 \pm 0.0126$ & $\pmb{9.9851}$ & $\pmb{0.0549} \pm \pmb{ 0.0010 }$ \\ 
        \hline
        $\mathbb{H}^{2}$ & $1.2500$ & $0.3201 \pm 0.0004$ & $1.2498$ & $0.2125 \pm 0.0128$ & $\pmb{1.2498}$ & $\pmb{0.0089} \pm \pmb{ 0.0003 }$ \\ 
        \hline
        $\mathcal{P}\left( 2 \right)$ & $1.0786$ & $0.3801 \pm 0.0047$ & $\pmb{1.0669}$ & $0.2419 \pm 0.0069$ & $1.0669$ & $\pmb{0.0227} \pm \pmb{ 0.0010 }$ \\ 
        $\mathcal{P}\left( 3 \right)$ & $2.3006$ & $0.4115 \pm 0.0009$ & $2.3810$ & $106.2814 \pm 0.0135$ & $\pmb{2.2639}$ & $\pmb{0.0343} \pm \pmb{ 0.0004 }$ \\ 
        \hline
        $\text{Gaussian Distribution}$ & $\pmb{2.5951}$ & $0.1809 \pm 0.0010$ & $2.5952$ & $0.1108 \pm 0.0055$ & $2.5952$ & $\pmb{0.0123} \pm \pmb{ 0.0002 }$ \\ 
        $\text{Fr\'echet Distribution}$ & $\pmb{1.1538}$ & $0.1778 \pm 0.0029$ & $1.1539$ & $0.0813 \pm 0.0044$ & $1.1539$ & $\pmb{0.0055} \pm \pmb{ 0.0003 }$ \\ 
        $\text{Cauchy Distribution}$ & $1.6452$ & $0.1818 \pm 0.0007$ & $1.6453$ & $0.1136 \pm 0.0067$ & $\pmb{1.6452}$ & $\pmb{0.0137} \pm \pmb{ 0.0001 }$ \\ 
        $\text{Pareto Distribution}$ & $0.8314$ & $0.1779 \pm 0.0026$ & $\pmb{0.8297}$ & $0.0758 \pm 0.0036$ & $0.8297$ & $\pmb{0.0043} \pm \pmb{ 0.0002 }$ \\ 
        \bottomrule
    \end{tabular*}
    \caption{The table shows the length of the estimated geodesics for different methods on a GPU. The methods have been stopped if the norm of the gradient was less than $10^{-4}$ or if a maximum of $1,000$ iterations has been reached. When the computational time was longer than $24$ hours, the value is set to $-$.}
    \label{tab:riemannian_runtime1_gpu_T100}
\end{sidewaystable}

\begin{sidewaystable}
    \begin{tabular*}{\textheight}{@{\extracolsep\fill}lcccccc}
        \toprule%
        & \multicolumn{2}{c}{\textbf{ADAM (T=50)}} & \multicolumn{2}{c}{\textbf{Sparse Newton  (T=50)}} & \multicolumn{2}{c}{\textbf{GEORCE  (T=50)}} \\\cmidrule{2-3}\cmidrule{4-5}\cmidrule{6-7}%
        Riemannian Manifold & Length & Runtime & Length & Runtime & Length & Runtime \\
        \midrule
        $\mathbb{S}^{2}$ & $2.2479$ & $0.2683 \pm 0.0082$ & $2.2478$ & $0.1094 \pm 0.0047$ & $\pmb{2.2477}$ & $\pmb{0.0187} \pm \pmb{ 0.0002 }$ \\ 
        $\mathbb{S}^{3}$ & $2.4069$ & $0.2593 \pm 0.0110$ & $\pmb{2.4064}$ & $0.1370 \pm 0.0082$ & $2.4064$ & $\pmb{0.0866} \pm \pmb{ 0.0031 }$ \\ 
        $\mathbb{S}^{5}$ & $2.3494$ & $0.2907 \pm 0.0103$ & $2.3480$ & $0.3153 \pm 0.0063$ & $\pmb{2.3480}$ & $\pmb{0.0622} \pm \pmb{ 0.0019 }$ \\ 
        $\mathbb{S}^{10}$ & $1.9470$ & $0.3026 \pm 0.0014$ & $1.9274$ & $0.8966 \pm 0.0034$ & $\pmb{1.9272}$ & $\pmb{0.0540} \pm \pmb{ 0.0016 }$ \\ 
        $\mathbb{S}^{20}$ & $1.5036$ & $0.3349 \pm 0.0069$ & $1.4610$ & $28.7634 \pm 0.0016$ & $\pmb{1.4566}$ & $\pmb{0.0670} \pm \pmb{ 0.0003 }$ \\ 
        $\mathbb{S}^{50}$ & $1.0320$ & $0.4713 \pm 0.0025$ & $1.0060$ & $39.8257 \pm 0.0010$ & $\pmb{0.5279}$ & $\pmb{0.0438} \pm \pmb{ 0.0002 }$ \\ 
        $\mathbb{S}^{100}$ & $0.7642$ & $0.8173 \pm 0.0009$ & $0.6922$ & $4.2461 \pm 0.0007$ & $\pmb{0.3932}$ & $\pmb{0.0704} \pm \pmb{ 0.0027 }$ \\ 
        $\mathbb{S}^{250}$ & $0.5136$ & $3.2362 \pm 0.0009$ & $0.4589$ & $60.6258 \pm 0.0179$ & $\pmb{0.2748}$ & $\pmb{0.1525} \pm \pmb{ 0.0002 }$ \\ 
        $\mathbb{S}^{500}$ & $0.4253$ & $7.7012 \pm 0.0038$ & $0.3278$ & $706.9603 \pm 0.0293$ & $\pmb{0.1858}$ & $\pmb{0.7340} \pm \pmb{ 0.0006 }$ \\ 
        $\mathbb{S}^{1000}$ & $0.3790$ & $20.5850 \pm 0.0098$ & $-$ & $-$ & $\pmb{0.1626}$ & $\pmb{3.1229} \pm \pmb{ 0.0007 }$ \\ 
        \hline
        $\mathrm{E}\left( 2 \right)$ & $1.9331$ & $0.2723 \pm 0.0013$ & $\pmb{1.9330}$ & $0.1365 \pm 0.0083$ & $1.9331$ & $\pmb{0.0155} \pm \pmb{ 0.0003 }$ \\ 
        $\mathrm{E}\left( 3 \right)$ & $2.0243$ & $0.2852 \pm 0.0017$ & $\pmb{2.0233}$ & $0.1828 \pm 0.0074$ & $2.0235$ & $\pmb{0.0305} \pm \pmb{ 0.0012 }$ \\ 
        $\mathrm{E}\left( 5 \right)$ & $1.8262$ & $0.2996 \pm 0.0079$ & $\pmb{1.8047}$ & $0.6859 \pm 0.0047$ & $1.8054$ & $\pmb{0.1172} \pm \pmb{ 0.0050 }$ \\ 
        $\mathrm{E}\left( 10 \right)$ & $1.5193$ & $0.3094 \pm 0.0086$ & $\pmb{0.9472}$ & $57.6302 \pm 0.0107$ & $1.4788$ & $\pmb{0.0855} \pm \pmb{ 0.0036 }$ \\ 
        $\mathrm{E}\left( 20 \right)$ & $1.2028$ & $0.3379 \pm 0.0049$ & $\pmb{0.7603}$ & $48.7781 \pm 0.0029$ & $1.1366$ & $\pmb{0.0727} \pm \pmb{ 0.0033 }$ \\ 
        $\mathrm{E}\left( 50 \right)$ & $0.8382$ & $0.4596 \pm 0.0092$ & $0.6157$ & $153.9548 \pm 0.0165$ & $\pmb{0.4417}$ & $\pmb{0.0284} \pm \pmb{ 0.0001 }$ \\ 
        $\mathrm{E}\left( 100 \right)$ & $0.6175$ & $0.8219 \pm 0.0036$ & $0.5822$ & $3.9507 \pm 0.0014$ & $\pmb{0.3160}$ & $\pmb{0.0373} \pm \pmb{ 0.0001 }$ \\ 
        $\mathrm{E}\left( 250 \right)$ & $0.4216$ & $2.4940 \pm 0.0028$ & $0.3882$ & $48.5547 \pm 0.0120$ & $\pmb{0.2039}$ & $\pmb{0.1519} \pm \pmb{ 0.0003 }$ \\ 
        $\mathrm{E}\left( 500 \right)$ & $0.3454$ & $5.5967 \pm 0.0145$ & $-$ & $-$ & $\pmb{0.1434}$ & $\pmb{0.7326} \pm \pmb{ 0.0005 }$ \\ 
        $\mathrm{E}\left( 1000 \right)$ & $0.3056$ & $14.1006 \pm 0.0361$ & $-$ & $-$ & $\pmb{0.0996}$ & $\pmb{4.1512} \pm \pmb{ 0.0043 }$ \\ 
        \hline
        $\mathbb{T}^{2}$ & $9.8952$ & $0.2821 \pm 0.0029$ & $\pmb{9.8943}$ & $38.0816 \pm 0.0018$ & $9.8955$ & $\pmb{0.0531} \pm \pmb{ 0.0013 }$ \\ 
        \hline
        $\mathbb{H}^{2}$ & $1.2435$ & $0.3072 \pm 0.0003$ & $\pmb{1.2434}$ & $0.1172 \pm 0.0048$ & $1.2434$ & $\pmb{0.0082} \pm \pmb{ 0.0001 }$ \\ 
        \hline
        $\mathcal{P}\left( 2 \right)$ & $1.0704$ & $0.3571 \pm 0.0072$ & $\pmb{1.0567}$ & $0.1231 \pm 0.0051$ & $1.0567$ & $\pmb{0.0243} \pm \pmb{ 0.0002 }$ \\ 
        $\mathcal{P}\left( 3 \right)$ & $2.2400$ & $0.3924 \pm 0.0057$ & $2.3328$ & $57.1278 \pm 0.0125$ & $\pmb{2.2304}$ & $\pmb{0.0198} \pm \pmb{ 0.0001 }$ \\ 
        \hline
        $\text{Gaussian Distribution}$ & $2.5779$ & $0.1785 \pm 0.0003$ & $\pmb{2.5778}$ & $0.0617 \pm 0.0027$ & $2.5778$ & $\pmb{0.0130} \pm \pmb{ 0.0002 }$ \\ 
        $\text{Fr\'echet Distribution}$ & $1.1471$ & $0.1696 \pm 0.0004$ & $\pmb{1.1457}$ & $0.0442 \pm 0.0016$ & $1.1457$ & $\pmb{0.0063} \pm \pmb{ 0.0002 }$ \\ 
        $\text{Cauchy Distribution}$ & $1.6341$ & $0.0230 \pm 0.0002$ & $1.6340$ & $0.0810 \pm 0.0037$ & $\pmb{1.6340}$ & $\pmb{0.0135} \pm \pmb{ 0.0003 }$ \\ 
        $\text{Pareto Distribution}$ & $0.8235$ & $0.0357 \pm 0.0003$ & $0.8235$ & $0.0439 \pm 0.0014$ & $\pmb{0.8234}$ & $\pmb{0.0052} \pm \pmb{ 0.0003 }$ \\ 
        \bottomrule
    \end{tabular*}
    \caption{The table shows the length of the estimated geodesics for different methods on a GPU. The methods have been stopped if the norm of the gradient was less than $10^{-4}$ or if a maximum of $1,000$ iterations has been reached. When the computational time was longer than $24$ hours, the value is set to $-$.}
    \label{tab:riemannian_runtime1_gpu_T50}
\end{sidewaystable}

\paragraph{Riemannian manifolds}
\begin{wrapfigure}{r}{0.4\textwidth}
    \vspace{-2.em}
    \centering
    \includegraphics[width=0.4\textwidth]{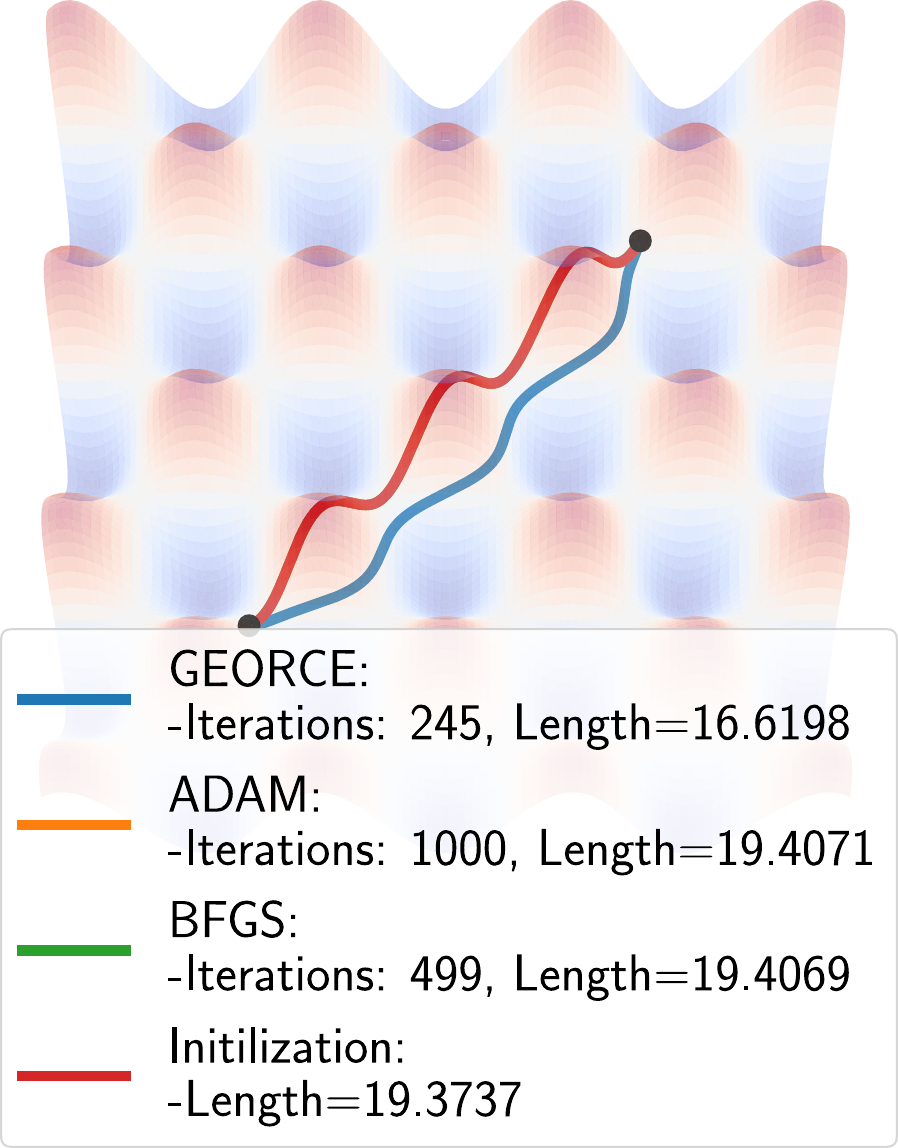}
    \caption{The geodesics for an ``egg-tray'' parameterized by $(x,y,2\cos x \cos y)$, where there are no unique geodesic connecting the points. The \textit{BFGS} estimate, \textit{ADAM} estimate and the initial curve overlap completely.}
    \label{fig:egg_tray}
    \vspace{-1.5em}
\end{wrapfigure}
To compare \textit{GEORCE} with other optimization algorithms, we show the runtime and length of the geodesics computed for different manifolds with varying dimensions in Table~\ref{tab:riemmannian_comparison_table} on a CPU. In Table~\ref{tab:riemannian_runtime1_gpu_T100} and Table~\ref{tab:riemannian_runtime1_gpu_T50}, we show the corresponding results on a GPU for compatible methods for $T=100$ and $T=50$, respectively. Here we compare \textit{GEORCE} with \textit{BFGS} \citep{broyden_bfgs, fletcher_bfgs, Goldfarb1970AFO, shanno_bfgs} and \textit{ADAM} \citep{kingma2017adam}. We terminate the algorithm if the $\ell^{2}$-norm of the gradient of the discretized energy functional in Eq.~\ref{eq:disc_const_energy} is less than $10^{-4}$, or if the number of iterations exceeds $1,000$. A full description of the manifolds and experimental details can be found in Appendix~\ref{ap:manifold_description} and Appendix~\ref{ap:experiments}, respectively. 

We show the estimated geodesics with $T=100$ grid points and summarize the results for the corresponding runtime and length of the computed geodesics for the $n$-sphere and $n$-ellipsoid in Fig.~\ref{fig:sphere_ellipsoid_runtime}. In general, we see that \textit{GEORCE} is considerably faster with a shorter length compared to alternative methods. 

To test \textit{GEORCE} on manifolds with multiple length minimizing geodesics, we consider the geodesic between $(-5,-5)$ and $(5,5)$ on an ``egg-tray'' parameterized by $(x,y,2\cos x \cos y)$ with $T=1,000$ grid points. We show the estimates in Fig.~\ref{fig:egg_tray}, where we see that \textit{GEORCE} obtains a significantly smaller length, where there is also a clear symmetry to solution to \textit{GEORCE} along the other side of the ``egg-tops''.

\begin{figure}[!t]
    \centering
    \includegraphics[width=1.0\textwidth]{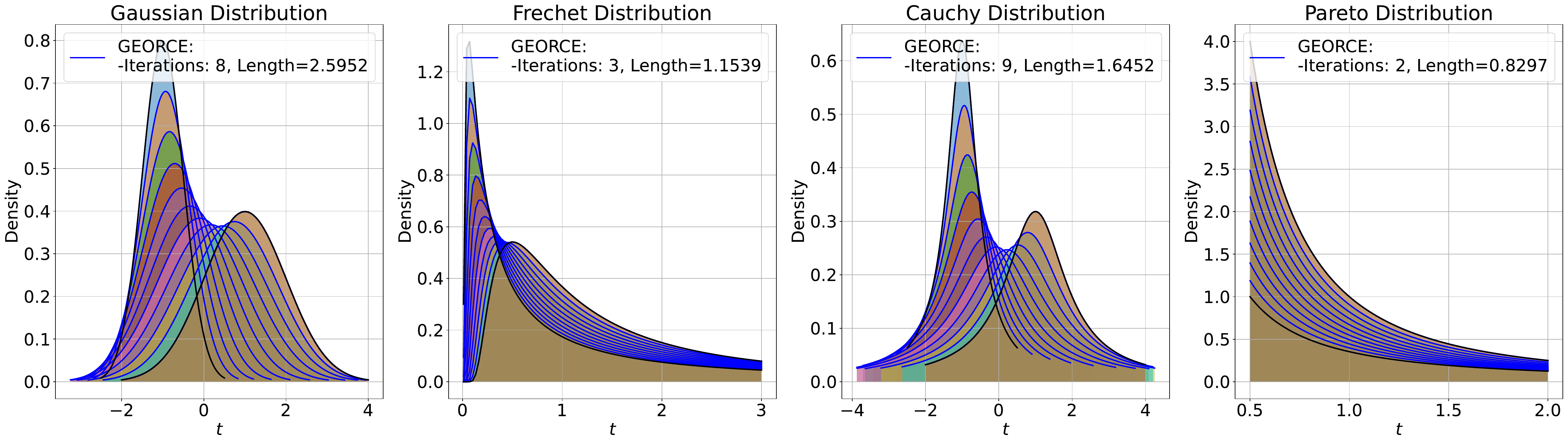}
    \caption{The connecting geodesic between four different distributions equipped with the Fisher-Rao metric using \textit{GEORCE}. From left to right the figures show the connecting geodesic (blue) between two fixed distributions (black) for the Gaussian distribution, Fréchet distribution, Cauchy distribution and the Pareto distribution.}
    \label{fig:information_geometry_geodesics}
    \vspace{-1.0em}
\end{figure}

To illustrate the application of \textit{GEORCE} to more abstract manifolds, we compute geodesics on the statistical manifolds, i.e., between distributions \citep{arvanitidis2022pullinginformationgeometry, Nielsen_2020}. Fig.~\ref{fig:information_geometry_geodesics} shows the estimated geodesic between a Gaussian distribution, a Fréchet distribution, a Cauchy distribution, and a Pareto distribution using \textit{GEORCE} all equipped with the Fisher-Rao metric in Eq.~\ref{eq:fisher_rao_metric}. The metric matrix function for the corresponding statistical manifold is described in \citep{miyamoto2024closedformexpressionsfisherraodistance}. We see that \textit{GEORCE} converges after only a few iterations. The computed geodesics using alternative methods are summarized in Table~\ref{tab:riemmannian_comparison_table}, where we see that \textit{GEORCE} obtains significantly shorter length much faster, also in high-dimension, compared to alternative methods.

\begin{wrapfigure}[19]{r}{0.50\textwidth}
    \vspace{-6.0mm}
    \centering
    \includegraphics[width=0.50\textwidth]{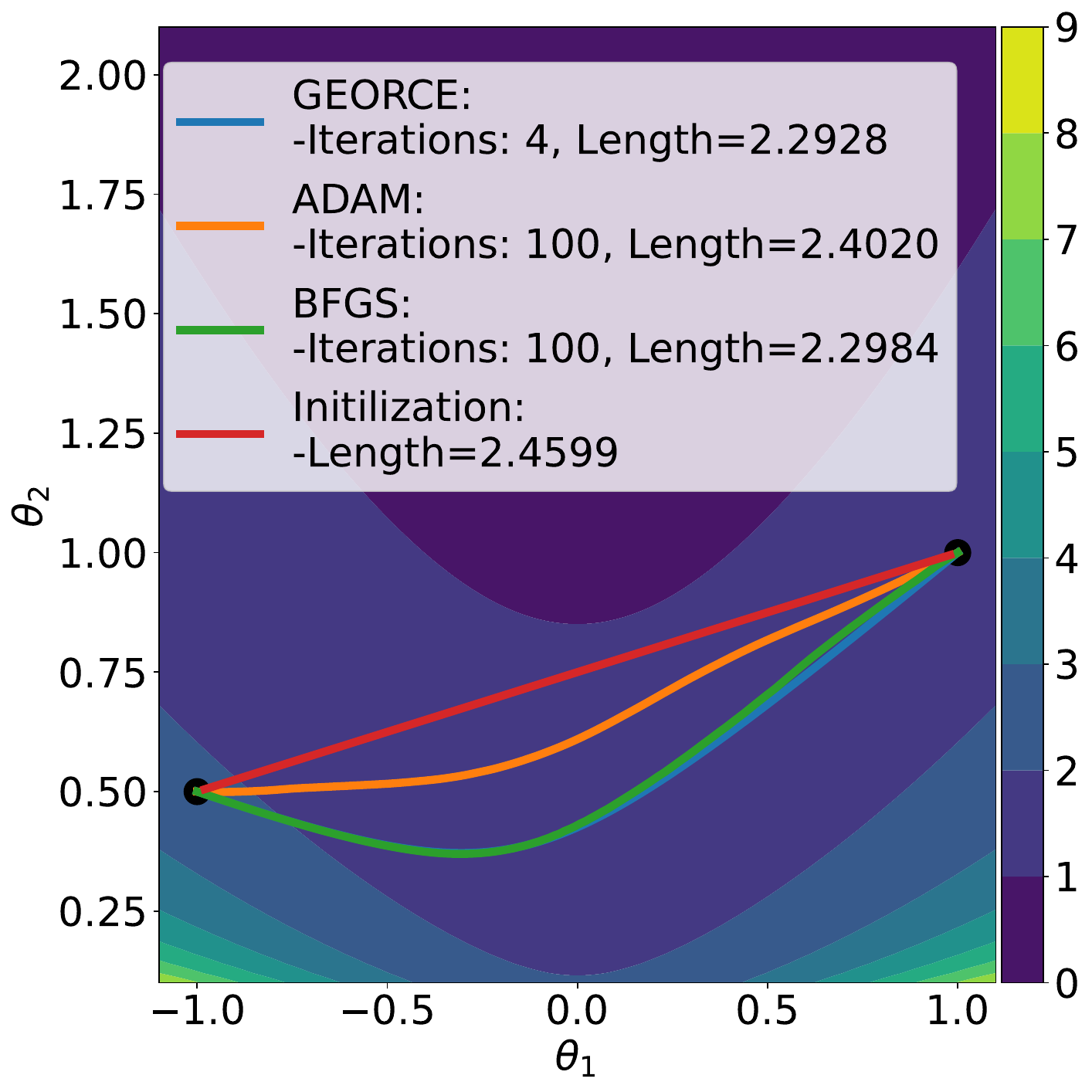}
    \vspace{-3.0mm}
    \caption{The geodesics computed under the Hessian of the log-partition function of a Gaussian distribution. The background color illustrates the log-partition function.}
    \label{fig:bregman_geometry}
\end{wrapfigure}

We further illustrate the application of \textit{GEORCE} to information geometry by considering potential functions encountered in Bregman geometry \citep{Nielsen_2020}. We consider the log-partition function for the Gaussian distribution, which is convex and smooth
\begin{equation*}
    A(\theta_{1},\theta_{2}) = \frac{\theta_{1}^{2}}{2\theta_{2}}+\frac{1}{2}\log\left(\frac{2\pi}{\theta_{2}}\right),
\end{equation*}
where $\theta_{1}=\frac{\mu}{\sigma^{2}}$ and $\theta_{2}=\frac{1}{\sigma^{2}}$ for a Gaussian distribution with parameters $(\mu,\sigma)$ such that the density takes the form of an exponential family $p(x) = h(x)\exp\left(\theta_{1} x - A(\theta_{1},\theta_{2})\right)$ for a suitable $h$. We consider the corresponding Hessian of the log-partition function, which defines a Riemannian metric. Fig.~\ref{fig:bregman_geometry} shows geodesics computed with both \textit{GEORCE} and alternative methods that use at most $100$ iterations. We see that \textit{GEORCE} uses less iterations and obtains the smallest length compared to the alternative methods.

\begin{figure}[t!]
    \centering
    \includegraphics[width=1.0\textwidth]{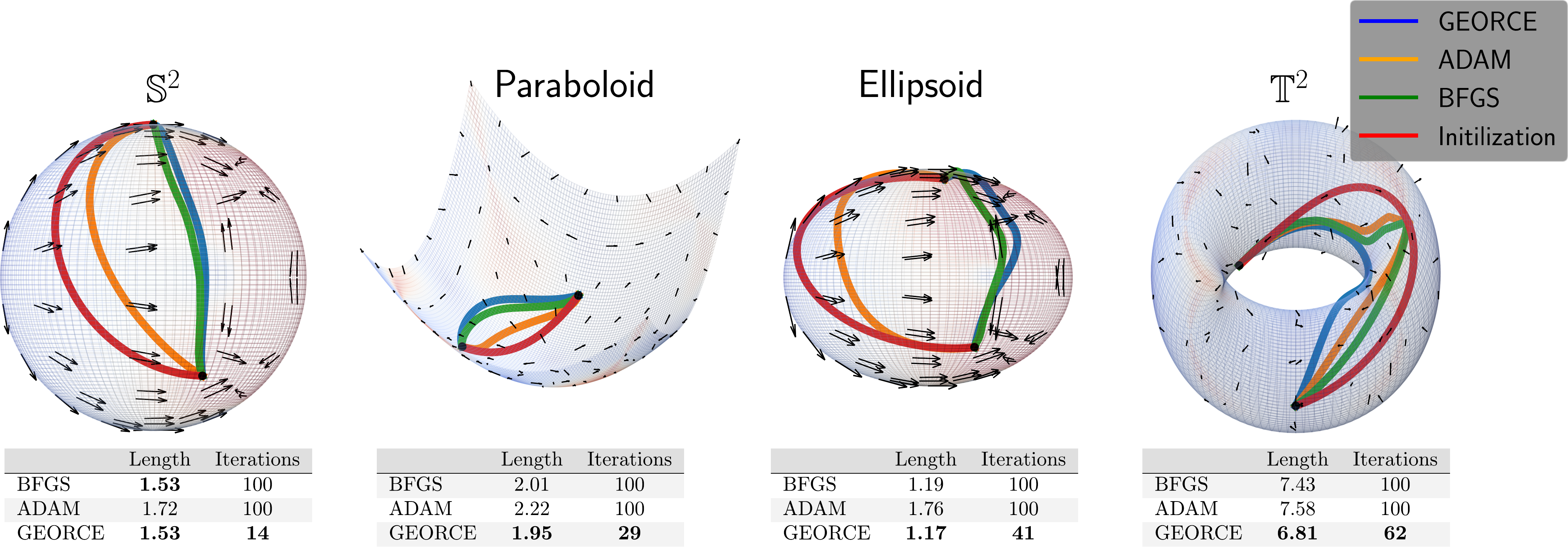}
    \caption{Comparison between \textit{GEORCE} and baseline methods for computing geodesics on four different Finsler manifolds. The Finsler metrics stem from a Riemannian background metric with the force field in Eq.~\ref{eq:generic_forcefield} on them. The length of the force field is displayed in color, while the arrows show the direction of the force field. The algorithms are terminated if the $\ell^{2}$-norm of the gradient of the discretized energy functional is less than $10^{-4}$, or if the number of iterations exceeds $100$. We see that \textit{GEORCE} uses less iterations and obtains the smallest length.}
    \label{fig:synthetic_finsler_geodesics}
    \vspace{-1.em}
\end{figure}

\begin{sidewaystable}
    \begin{tabular*}{\textheight}{@{\extracolsep\fill}lcccccc}
        \toprule%
        & \multicolumn{2}{c}{\textbf{BFGS (T=100)}} & \multicolumn{2}{c}{\textbf{ADAM  (T=100)}} & \multicolumn{2}{c}{\textbf{GEORCE  (T=100)}} \\\cmidrule{2-3}\cmidrule{4-5}\cmidrule{6-7}%
        Finsler Manifold & Length & Runtime & Length & Runtime & Length & Runtime \\
        \midrule
        $\mathbb{S}^{2}$ & $1.6525$ & $244.9129 \pm 0.0680$ & $1.6536$ & $0.4967 \pm 0.0007$ & $\pmb{1.6525}$ & $\pmb{0.0194} \pm \pmb{ 0.0009 }$ \\ 
        $\mathbb{S}^{3}$ & $1.7433$ & $277.8015 \pm 0.5819$ & $1.7456$ & $0.7639 \pm 0.0031$ & $\pmb{1.7432}$ & $\pmb{0.0362} \pm \pmb{ 0.0001 }$ \\ 
        $\mathbb{S}^{5}$ & $1.7032$ & $435.6987 \pm 4.6022$ & $1.7377$ & $4.6711 \pm 0.0047$ & $\pmb{1.7029}$ & $\pmb{0.4556} \pm \pmb{ 0.0021 }$ \\ 
        $\mathbb{S}^{10}$ & $1.4151$ & $812.2809 \pm 13.5968$ & $1.5174$ & $9.7974 \pm 0.0052$ & $\pmb{1.4109}$ & $\pmb{0.7473} \pm \pmb{ 0.0029 }$ \\ 
        $\mathbb{S}^{20}$ & $1.0951$ & $1446.0956 \pm 4.8038$ & $1.2018$ & $18.3489 \pm 0.0747$ & $\pmb{1.0760}$ & $\pmb{1.2856} \pm \pmb{ 0.0016 }$ \\ 
        $\mathbb{S}^{50}$ & $0.8216$ & $5992.7261 \pm 56.5973$ & $0.8128$ & $146.4556 \pm 0.9341$ & $\pmb{0.7285}$ & $\pmb{8.4758} \pm \pmb{ 0.0234 }$ \\ 
        $\mathbb{S}^{100}$ & $-$ & $-$ & $0.6187$ & $498.9705 \pm 0.4032$ & $\pmb{0.2570}$ & $\pmb{15.3786} \pm \pmb{ 0.0092 }$ \\ 
        $\mathbb{S}^{250}$ & $-$ & $-$ & $0.5195$ & $1397.6732 \pm 0.7793$ & $\pmb{0.1622}$ & $\pmb{132.5889} \pm \pmb{ 0.1106 }$ \\ 
        $\mathbb{S}^{500}$ & $-$ & $-$ & $0.4850$ & $1954.3231 \pm 14.1107$ & $\pmb{0.1404}$ & $\pmb{364.5326} \pm \pmb{ 1.8222 }$ \\ 
        $\mathbb{S}^{1000}$ & $-$ & $-$ & $0.3843$ & $5982.0884 \pm 204.1833$ & $\pmb{0.1140}$ & $\pmb{1338.9869} \pm \pmb{ 2.3954 }$ \\ 
        \hline
        $\mathrm{E}\left( 2 \right)$ & $\pmb{1.3616}$ & $230.0809 \pm 1.3967$ & $1.3621$ & $0.5310 \pm 0.0025$ & $1.3618$ & $\pmb{0.0130} \pm \pmb{ 0.0002 }$ \\ 
        $\mathrm{E}\left( 3 \right)$ & $\pmb{1.4001}$ & $416.6281 \pm 5.5354$ & $1.4087$ & $0.8183 \pm 0.0026$ & $1.4023$ & $\pmb{0.1081} \pm \pmb{ 0.0001 }$ \\ 
        $\mathrm{E}\left( 5 \right)$ & $\pmb{1.2951}$ & $638.7185 \pm 2.7818$ & $1.3265$ & $4.3802 \pm 0.0039$ & $1.2956$ & $\pmb{0.4554} \pm \pmb{ 0.0014 }$ \\ 
        $\mathrm{E}\left( 10 \right)$ & $\pmb{1.0733}$ & $741.9274 \pm 12.3382$ & $1.1229$ & $9.7899 \pm 0.0117$ & $1.0804$ & $\pmb{0.5815} \pm \pmb{ 0.0022 }$ \\ 
        $\mathrm{E}\left( 20 \right)$ & $0.8510$ & $1820.3137 \pm 4.0734$ & $0.8980$ & $19.4614 \pm 0.0039$ & $\pmb{0.8121}$ & $\pmb{1.0671} \pm \pmb{ 0.0044 }$ \\ 
        $\mathrm{E}\left( 50 \right)$ & $0.5981$ & $11191.9424 \pm 597.7002$ & $0.6325$ & $146.6409 \pm 0.1668$ & $\pmb{0.2842}$ & $\pmb{4.4377} \pm \pmb{ 0.0451 }$ \\ 
        $\mathrm{E}\left( 100 \right)$ & $-$ & $-$ & $0.4936$ & $491.8223 \pm 0.2346$ & $\pmb{0.2009}$ & $\pmb{14.5367} \pm \pmb{ 0.0092 }$ \\ 
        $\mathrm{E}\left( 250 \right)$ & $-$ & $-$ & $0.4159$ & $1475.5049 \pm 4.1430$ & $\pmb{0.1287}$ & $\pmb{125.4187} \pm \pmb{ 0.0888 }$ \\ 
        $\mathrm{E}\left( 500 \right)$ & $-$ & $-$ & $0.4142$ & $1984.7845 \pm 10.5689$ & $\pmb{0.0967}$ & $\pmb{373.2816} \pm \pmb{ 0.4227 }$ \\ 
        $\mathrm{E}\left( 1000 \right)$ & $-$ & $-$ & $0.3444$ & $5929.4438 \pm 167.1873$ & $\pmb{0.0799}$ & $\pmb{1211.8812} \pm \pmb{ 0.2511 }$ \\ 
        \hline
        $\mathbb{T}^{2}$ & $7.3890$ & $449.4798 \pm 10.3282$ & $7.3585$ & $0.3294 \pm 0.0018$ & $\pmb{6.8067}$ & $\pmb{0.1126} \pm \pmb{ 0.0002 }$ \\ 
        \hline
        $\mathbb{H}^{2}$ & $1.3340$ & $384.3149 \pm 0.3044$ & $1.3360$ & $0.2744 \pm 0.0024$ & $\pmb{1.3337}$ & $\pmb{0.0094} \pm \pmb{ 0.0001 }$ \\ 
        \hline
        $\mathcal{P}\left( 2 \right)$ & $\pmb{0.7291}$ & $414.8355 \pm 3.0837$ & $0.7338$ & $0.6368 \pm 0.0026$ & $0.7292$ & $\pmb{0.0222} \pm \pmb{ 0.0001 }$ \\ 
        $\mathcal{P}\left( 3 \right)$ & $\pmb{1.2856}$ & $536.6575 \pm 0.9323$ & $1.2980$ & $5.2824 \pm 0.0540$ & $1.2856$ & $\pmb{0.2094} \pm \pmb{ 0.0004 }$ \\ 
        \hline
        Gaussian Distribution & $1.6878$ & $163.4399 \pm 0.1859$ & $1.6936$ & $0.1145 \pm 0.0001$ & $\pmb{1.6878}$ & $\pmb{0.0047} \pm \pmb{ 0.0001 }$ \\ 
        Fr\'echet Distribution & $0.5408$ & $123.8418 \pm 0.0499$ & $0.5406$ & $0.0585 \pm 0.0001$ & $\pmb{0.5405}$ & $\pmb{0.0021} \pm \pmb{ 0.0001 }$ \\ 
        Cauchy Distribution & $1.1134$ & $182.3980 \pm 2.2559$ & $\pmb{1.1133}$ & $0.1105 \pm 0.0007$ & $1.1133$ & $\pmb{0.0067} \pm \pmb{ 0.0002 }$ \\ 
        Pareto Distribution & $0.3884$ & $84.8186 \pm 0.0832$ & $0.3885$ & $0.0235 \pm 0.0007$ & $\pmb{0.3883}$ & $\pmb{0.0020} \pm \pmb{ 0.0001 }$ \\ 
        \bottomrule
    \end{tabular*}
    \caption{The length of the estimated geodesics for the \textit{BFGS}-algorithm, \textit{ADAM}-gradient descent and \textit{GEORCE} on a CPU. The algorithms have been stopped if the $\ell^{2}$-norm of the gradient was less than $10^{-4}$ or if a maximum of $1,000$ iterations have been reached. $\mathrm{E}(n)$ denotes an Ellipsoid of dimension $n$ with half axes of $n$ equally spaced points between $0.5$ and $1.0$, while $\mathcal{P}(n)$ denotes the space of $n \times n$ symmetric positive definite matrices. When the computational time was longer than $24$ hours, the value is set to $-$.}
    \label{tab:finsler_comparison_table}
\end{sidewaystable}

\begin{sidewaystable}
    \begin{tabular*}{\textheight}{@{\extracolsep\fill}lcccccc}
        \toprule%
        & \multicolumn{2}{c}{\textbf{ADAM (T=100)}} & \multicolumn{2}{c}{\textbf{Sparse Newton  (T=100)}} & \multicolumn{2}{c}{\textbf{GEORCE  (T=100)}} \\\cmidrule{2-3}\cmidrule{4-5}\cmidrule{6-7}%
        Finsler Manifold & Length & Runtime & Length & Runtime & Length & Runtime \\
        \midrule
        $\mathbb{S}^{2}$ & $1.6530$ & $0.7706 \pm 0.0018$ & $\pmb{1.6525}$ & $0.2512 \pm 0.0089$ & $1.6525$ & $\pmb{0.0372} \pm \pmb{ 0.0004 }$ \\ 
        $\mathbb{S}^{3}$ & $1.7451$ & $0.7362 \pm 0.0036$ & $-$ & $-$ & $\pmb{1.7432}$ & $\pmb{0.0431} \pm \pmb{ 0.0005 }$ \\ 
        $\mathbb{S}^{5}$ & $1.7371$ & $0.9292 \pm 0.0038$ & $-$ & $-$ & $\pmb{1.7029}$ & $\pmb{0.1036} \pm \pmb{ 0.0048 }$ \\ 
        $\mathbb{S}^{10}$ & $1.5174$ & $1.1358 \pm 0.0018$ & $1.4783$ & $208.1472 \pm 0.0176$ & $\pmb{1.4109}$ & $\pmb{0.1000} \pm \pmb{ 0.0033 }$ \\ 
        $\mathbb{S}^{20}$ & $1.2009$ & $1.5876 \pm 0.0028$ & $-$ & $-$ & $\pmb{1.0760}$ & $\pmb{0.1509} \pm \pmb{ 0.0058 }$ \\ 
        $\mathbb{S}^{50}$ & $0.8130$ & $2.0381 \pm 0.0026$ & $-$ & $-$ & $\pmb{0.7283}$ & $\pmb{0.1373} \pm \pmb{ 0.0032 }$ \\ 
        $\mathbb{S}^{100}$ & $0.6146$ & $2.7712 \pm 0.0032$ & $0.6210$ & $972.0195 \pm 0.0307$ & $\pmb{0.2553}$ & $\pmb{0.1106} \pm \pmb{ 0.0028 }$ \\ 
        $\mathbb{S}^{250}$ & $0.5196$ & $3.9803 \pm 0.0019$ & $-$ & $-$ & $\pmb{0.1636}$ & $\pmb{0.5069} \pm \pmb{ 0.0003 }$ \\ 
        $\mathbb{S}^{500}$ & $0.4849$ & $6.6164 \pm 0.0023$ & $-$ & $-$ & $\pmb{0.1417}$ & $\pmb{1.6591} \pm \pmb{ 0.0004 }$ \\ 
        $\mathbb{S}^{1000}$ & $0.3835$ & $25.4638 \pm 0.1448$ & $-$ & $-$ & $\pmb{0.1134}$ & $\pmb{7.9991} \pm \pmb{ 0.0074 }$ \\ 
        \hline
        $\mathrm{E}\left( 2 \right)$ & $1.3634$ & $0.7758 \pm 0.0036$ & $\pmb{1.3616}$ & $0.3735 \pm 0.0010$ & $1.3618$ & $\pmb{0.0282} \pm \pmb{ 0.0009 }$ \\ 
        $\mathrm{E}\left( 3 \right)$ & $1.4085$ & $0.7541 \pm 0.0032$ & $-$ & $-$ & $\pmb{1.4023}$ & $\pmb{0.1273} \pm \pmb{ 0.0047 }$ \\ 
        $\mathrm{E}\left( 5 \right)$ & $1.3305$ & $1.0252 \pm 0.0017$ & $1.5359$ & $170.9661 \pm 0.0296$ & $\pmb{1.2956}$ & $\pmb{0.1135} \pm \pmb{ 0.0039 }$ \\ 
        $\mathrm{E}\left( 10 \right)$ & $1.1235$ & $1.1254 \pm 0.0045$ & $-$ & $-$ & $\pmb{1.0804}$ & $\pmb{0.0825} \pm \pmb{ 0.0031 }$ \\ 
        $\mathrm{E}\left( 20 \right)$ & $0.8987$ & $1.6039 \pm 0.0035$ & $-$ & $-$ & $\pmb{0.8121}$ & $\pmb{0.1282} \pm \pmb{ 0.0054 }$ \\ 
        $\mathrm{E}\left( 50 \right)$ & $0.6332$ & $2.0424 \pm 0.0034$ & $-$ & $-$ & $\pmb{0.2845}$ & $\pmb{0.0770} \pm \pmb{ 0.0004 }$ \\ 
        $\mathrm{E}\left( 100 \right)$ & $0.4953$ & $2.6704 \pm 0.0031$ & $0.4577$ & $470.9805 \pm 0.1448$ & $\pmb{0.2018}$ & $\pmb{0.1127} \pm \pmb{ 0.0023 }$ \\ 
        $\mathrm{E}\left( 250 \right)$ & $0.4179$ & $4.0843 \pm 0.0009$ & $-$ & $-$ & $\pmb{0.1287}$ & $\pmb{0.5239} \pm \pmb{ 0.0047 }$ \\ 
        $\mathrm{E}\left( 500 \right)$ & $0.4139$ & $6.9391 \pm 0.0073$ & $0.2580$ & $1435.3881 \pm 0.0832$ & $\pmb{0.0968}$ & $\pmb{1.7109} \pm \pmb{ 0.0010 }$ \\ 
        $\mathrm{E}\left( 1000 \right)$ & $0.3442$ & $25.9660 \pm 0.1038$ & $-$ & $-$ & $\pmb{0.0798}$ & $\pmb{7.9988} \pm \pmb{ 0.0079 }$ \\ 
        \hline
        $\mathbb{T}^{2}$ & $7.4054$ & $0.8017 \pm 0.0045$ & $9.1903$ & $152.0036 \pm 0.0158$ & $\pmb{6.8067}$ & $\pmb{0.3145} \pm \pmb{ 0.0050 }$ \\ 
        \hline
        $\mathbb{H}^{2}$ & $1.3341$ & $0.7949 \pm 0.0072$ & $1.3338$ & $0.4562 \pm 0.0004$ & $\pmb{1.3337}$ & $\pmb{0.0256} \pm \pmb{ 0.0006 }$ \\ 
        \hline
        $\mathcal{P}\left( 2 \right)$ & $0.7334$ & $0.9344 \pm 0.0013$ & $\pmb{0.7291}$ & $0.4309 \pm 0.0014$ & $0.7292$ & $\pmb{0.0286} \pm \pmb{ 0.0005 }$ \\ 
        $\mathcal{P}\left( 3 \right)$ & $1.2987$ & $1.0812 \pm 0.0006$ & $1.2856$ & $0.6885 \pm 0.0009$ & $\pmb{1.2856}$ & $\pmb{0.0365} \pm \pmb{ 0.0005 }$ \\ 
        \hline
        $\text{Gaussian Distribution}$ & $1.6879$ & $0.4962 \pm 0.0034$ & $1.6878$ & $0.4148 \pm 0.0055$ & $\pmb{1.6878}$ & $\pmb{0.0255} \pm \pmb{ 0.0004 }$ \\ 
        $\text{Fr\'echet Distribution}$ & $0.5406$ & $0.2307 \pm 0.0069$ & $\pmb{0.5405}$ & $0.2060 \pm 0.0042$ & $0.5405$ & $\pmb{0.0088} \pm \pmb{ 0.0003 }$ \\ 
        $\text{Cauchy Distribution}$ & $1.1140$ & $0.4994 \pm 0.0017$ & $\pmb{1.1133}$ & $0.5167 \pm 0.0042$ & $1.1133$ & $\pmb{0.0367} \pm \pmb{ 0.0017 }$ \\ 
        $\text{Pareto Distribution}$ & $0.3885$ & $0.1014 \pm 0.0038$ & $0.3884$ & $0.1042 \pm 0.0031$ & $\pmb{0.3883}$ & $\pmb{0.0096} \pm \pmb{ 0.0004 }$ \\ 
        \bottomrule
    \end{tabular*}
    \caption{The table shows the length of the estimated geodesics for different methods on a GPU. The methods have been stopped if the norm of the gradient was less than $10^{-4}$ or if a maximum of $1,000$ iterations has been reached. When the computational time was longer than $24$ hours, the value is set to $-$.}
    \label{tab:finsler_runtime1_gpu_T100}
\end{sidewaystable}

\begin{sidewaystable}
    \begin{tabular*}{\textheight}{@{\extracolsep\fill}lcccccc}
        \toprule%
        & \multicolumn{2}{c}{\textbf{ADAM (T=50)}} & \multicolumn{2}{c}{\textbf{Sparse Newton  (T=50)}} & \multicolumn{2}{c}{\textbf{GEORCE  (T=50)}} \\\cmidrule{2-3}\cmidrule{4-5}\cmidrule{6-7}%
        Finsler Manifold & Length & Runtime & Length & Runtime & Length & Runtime \\
        \midrule
        $\mathbb{S}^{2}$ & $\pmb{1.6383}$ & $0.7833 \pm 0.0056$ & $1.6385$ & $0.2064 \pm 0.0073$ & $1.6385$ & $\pmb{0.0457} \pm \pmb{ 0.0004 }$ \\ 
        $\mathbb{S}^{3}$ & $1.7300$ & $0.7587 \pm 0.0049$ & $-$ & $-$ & $\pmb{1.7297}$ & $\pmb{0.0499} \pm \pmb{ 0.0003 }$ \\ 
        $\mathbb{S}^{5}$ & $1.6875$ & $0.9909 \pm 0.0022$ & $-$ & $-$ & $\pmb{1.6868}$ & $\pmb{0.1432} \pm \pmb{ 0.0050 }$ \\ 
        $\mathbb{S}^{10}$ & $1.4013$ & $1.1472 \pm 0.0024$ & $-$ & $-$ & $\pmb{1.3908}$ & $\pmb{0.1747} \pm \pmb{ 0.0067 }$ \\ 
        $\mathbb{S}^{20}$ & $1.0829$ & $1.5583 \pm 0.0016$ & $-$ & $-$ & $\pmb{1.0560}$ & $\pmb{0.2663} \pm \pmb{ 0.0005 }$ \\ 
        $\mathbb{S}^{50}$ & $0.7427$ & $1.4442 \pm 0.0038$ & $-$ & $-$ & $\pmb{0.3508}$ & $\pmb{0.0594} \pm \pmb{ 0.0003 }$ \\ 
        $\mathbb{S}^{100}$ & $0.5511$ & $2.5558 \pm 0.0020$ & $-$ & $-$ & $\pmb{0.2626}$ & $\pmb{0.0850} \pm \pmb{ 0.0024 }$ \\ 
        $\mathbb{S}^{250}$ & $0.4094$ & $3.8872 \pm 0.0020$ & $-$ & $-$ & $\pmb{0.1686}$ & $\pmb{0.2660} \pm \pmb{ 0.0042 }$ \\ 
        $\mathbb{S}^{500}$ & $0.3531$ & $7.5214 \pm 0.0022$ & $0.2861$ & $1388.8278 \pm 0.0965$ & $\pmb{0.1193}$ & $\pmb{1.1879} \pm \pmb{ 0.0003 }$ \\ 
        $\mathbb{S}^{1000}$ & $0.3269$ & $13.1209 \pm 0.0379$ & $-$ & $-$ & $\pmb{0.0828}$ & $\pmb{6.0238} \pm \pmb{ 0.0039 }$ \\ 
        \hline
        $\mathrm{E}\left( 2 \right)$ & $1.3460$ & $0.1327 \pm 0.0060$ & $\pmb{1.3460}$ & $0.2847 \pm 0.0043$ & $1.3461$ & $\pmb{0.0332} \pm \pmb{ 0.0011 }$ \\ 
        $\mathrm{E}\left( 3 \right)$ & $1.3870$ & $0.7621 \pm 0.0034$ & $-$ & $-$ & $\pmb{1.3826}$ & $\pmb{0.2156} \pm \pmb{ 0.0006 }$ \\ 
        $\mathrm{E}\left( 5 \right)$ & $1.2780$ & $0.9718 \pm 0.0022$ & $-$ & $-$ & $\pmb{1.2757}$ & $\pmb{0.1654} \pm \pmb{ 0.0050 }$ \\ 
        $\mathrm{E}\left( 10 \right)$ & $1.0665$ & $1.1072 \pm 0.0025$ & $-$ & $-$ & $\pmb{1.0126}$ & $\pmb{0.2215} \pm \pmb{ 0.0062 }$ \\ 
        $\mathrm{E}\left( 20 \right)$ & $0.8405$ & $1.5702 \pm 0.0031$ & $-$ & $-$ & $\pmb{0.7946}$ & $\pmb{0.1937} \pm \pmb{ 0.0061 }$ \\ 
        $\mathrm{E}\left( 50 \right)$ & $\pmb{0.5797}$ & $1.4336 \pm 0.0029$ & $-$ & $-$ & $-$ & $-$ \\ 
        $\mathrm{E}\left( 100 \right)$ & $0.4334$ & $2.0750 \pm 0.0026$ & $-$ & $-$ & $\pmb{0.2046}$ & $\pmb{0.0830} \pm \pmb{ 0.0004 }$ \\ 
        $\mathrm{E}\left( 250 \right)$ & $0.3234$ & $2.8610 \pm 0.0021$ & $-$ & $-$ & $\pmb{0.1365}$ & $\pmb{0.2644} \pm \pmb{ 0.0029 }$ \\ 
        $\mathrm{E}\left( 500 \right)$ & $0.2862$ & $5.1735 \pm 0.0122$ & $0.2428$ & $905.0696 \pm 0.0365$ & $\pmb{0.0940}$ & $\pmb{0.9444} \pm \pmb{ 0.0011 }$ \\ 
        $\mathrm{E}\left( 1000 \right)$ & $0.2723$ & $10.4951 \pm 0.0347$ & $-$ & $-$ & $\pmb{0.0893}$ & $\pmb{4.4688} \pm \pmb{ 0.0060 }$ \\ 
        \hline
         & $\pmb{7.2683}$ & $\pmb{0.7530} \pm \pmb{ 0.0015 }$ & $9.0475$ & $78.7132 \pm 0.0222$ & $9.0475$ & $13.7393 \pm 0.0093$ \\ 
        \hline
        $\mathbb{H}^{2}$ & $1.3328$ & $0.7454 \pm 0.0052$ & $1.3316$ & $0.2303 \pm 0.0058$ & $\pmb{1.3316}$ & $\pmb{0.0317} \pm \pmb{ 0.0003 }$ \\ 
        \hline
        $\mathcal{P}\left( 2 \right)$ & $0.7249$ & $0.5927 \pm 0.0008$ & $\pmb{0.7249}$ & $0.2189 \pm 0.0051$ & $0.7249$ & $\pmb{0.0331} \pm \pmb{ 0.0011 }$ \\ 
        $\mathcal{P}\left( 3 \right)$ & $1.2710$ & $1.0184 \pm 0.0008$ & $1.2707$ & $0.4325 \pm 0.0020$ & $\pmb{1.2707}$ & $\pmb{0.0434} \pm \pmb{ 0.0006 }$ \\ 
        \hline
        $\text{Gaussian Distribution}$ & $\pmb{1.6796}$ & $0.4337 \pm 0.0111$ & $1.6806$ & $0.2081 \pm 0.0047$ & $1.6806$ & $\pmb{0.0282} \pm \pmb{ 0.0017 }$ \\ 
        $\text{Fr\'echet Distribution}$ & $0.5370$ & $0.0829 \pm 0.0032$ & $0.5370$ & $0.1059 \pm 0.0042$ & $\pmb{0.5370}$ & $\pmb{0.0111} \pm \pmb{ 0.0003 }$ \\ 
        $\text{Cauchy Distribution}$ & $1.1085$ & $0.0626 \pm 0.0010$ & $\pmb{1.1084}$ & $0.2270 \pm 0.0081$ & $1.1084$ & $\pmb{0.0410} \pm \pmb{ 0.0015 }$ \\ 
        $\text{Pareto Distribution}$ & $0.3857$ & $0.0752 \pm 0.0025$ & $\pmb{0.3856}$ & $0.1054 \pm 0.0028$ & $0.3857$ & $\pmb{0.0082} \pm \pmb{ 0.0003 }$ \\ 
        \bottomrule
    \end{tabular*}
    \caption{The table shows the length of the estimated geodesics for different methods on a GPU. The methods have been stopped if the norm of the gradient was less than $10^{-4}$ or if a maximum of $1,000$ iterations has been reached. When the computational time was longer than $24$ hours, the value is set to $-$.}
    \label{tab:finsler_runtime1_gpu_T50}
\end{sidewaystable}

\paragraph{Finsler manifolds} To illustrate our method for different Finsler manifolds and dimensions, we consider a microswimmer on a Riemannian manifold $m$ that moves with constant self-propulsion velocity $v$ with Riemannian length, $v_{0}:=||v||_{g}$ \citep{Piro_2021} and we assume that this movement is affected by a generic force field, $f$, of the form
\begin{equation}
    f(x) = \frac{\sin x \odot \cos x}{(\cos x)^{\top} G(x) \cos x},
    \label{eq:generic_forcefield}
\end{equation}
where $\cos x$ and $\sin x$ denote the element-wise function for each entry of the local coordinates $x$, while $G$ denotes the corresponding Riemannian background metric such that the force field has $G$-length less than or equal to 1. If $||f(x)||_{g} < v_{0}$, then the optimal travel time of the microswimmer corresponds to a geodesic for a Finsler manifold of the form \citep{Piro_2021}
\begin{equation*}
    F = \sqrt{a_{ij}\dot{r}^{i}\dot{r}^{j}}+b_{i}\dot{r}^{i},
\end{equation*}
where $a_{ij} = g_{ij} \lambda + f_{i}f_{j}\lambda^{2}$, $b_{i} = -f_{i}\lambda$, $b_{i} = -f_{i}\lambda$, $f_{i} = g_{ij}f^{j}$ and $\lambda^{-1} = v_{0}^{2}-g_{ij}f^{i}f^{j}$ \citep{Piro_2021}, while $r(t)$ denotes the position of the microswimmer at time $t$. We refer to \cite{Piro_2021} and Appendix~\ref{ap:manifold_description} for details. We consider the same background metrics in Table~\ref{tab:riemmannian_comparison_table} all equipped with the generic force field in Eq.~\ref{eq:generic_forcefield} and compare \textit{GEORCE} to \textit{BFGS} \citep{broyden_bfgs, fletcher_bfgs, Goldfarb1970AFO, shanno_bfgs} and \textit{ADAM} \citep{kingma2017adam}. Fig.~\ref{fig:synthetic_finsler_geodesics} shows the estimated geodesics for four different Riemannian manifolds equipped with the generic force field in Eq.~\ref{eq:generic_forcefield}. We show the estimated geodesics for $T=100$ grid points in Table~\ref{tab:finsler_comparison_table}. In Table~\ref{tab:finsler_runtime1_gpu_T100} and Table~\ref{tab:finsler_runtime1_gpu_T50} we show the corresponding results on a GPU for compatible methods for $T=100$ and $T=50$. 

\paragraph{Applications to generative models} To illustrate the application of \textit{GEORCE} to learned manifolds, we compute geodesics for generative models and compare them to similar methods. A variational autoencoder (VAE) is a generative model that learns the data distribution, $\mathcal{X} \subset \mathbb{R}^{D}$, and a latent representation of data, $\mathcal{Z} \subset \mathbb{R}^{d}$, with $d < D$ by maximizing a lower bound of the log-likelihood of the data with respect to a variational family of distributions \citep{kingma2022autoencodingvariationalbayes}

%
%
%
%

For latent space generative models, the \emph{decoder} can be seen as a smooth $d$ dimensional immersion, $f: \mathcal{Z} \rightarrow \mathcal{X}$, which defines a Riemannian geometry with the pull-back metric $G(z)=J_{f}^{\top}(z)J_{f}(z)$, where $J_{f}$ denotes the Jacobian of $z$ \citep{arvanitidis2021latentspaceodditycurvature, shao2017riemannian}. 

We train a VAE for the CelebA dataset \citep{liu2015faceattributes}, which consists of $202{,}599$ facial images of size $64 \times 64 \times 3$, where the architecture is described in Appendix~\ref{ap:vae_architecture}. Fig.~\ref{fig:celeba_32} illustrates the geodesics constructed between two images reconstructed using the variational autoencoder. We see that \textit{GEORCE} obtains a smaller length than comparable methods for $T=100$ grid points. We refer to Appendix~\ref{ap:vae_experiments} for runtime results for the VAE, where we see that \textit{GEORCE} is significantly faster than alternative methods. Note that the tolerance for convergence is $10^{-3}$, since it was observed that it was not possible for any method to obtain an $\ell^{2}$-norm of the gradient less than $10^{-4}$. This is most likely due to potential instabilities for the learned manifold. We summarize the results in Table~\ref{tab:vae_table} on a CPU. 

\begin{figure}[t!]
    \centering
    \includegraphics[width=1.0\textwidth]{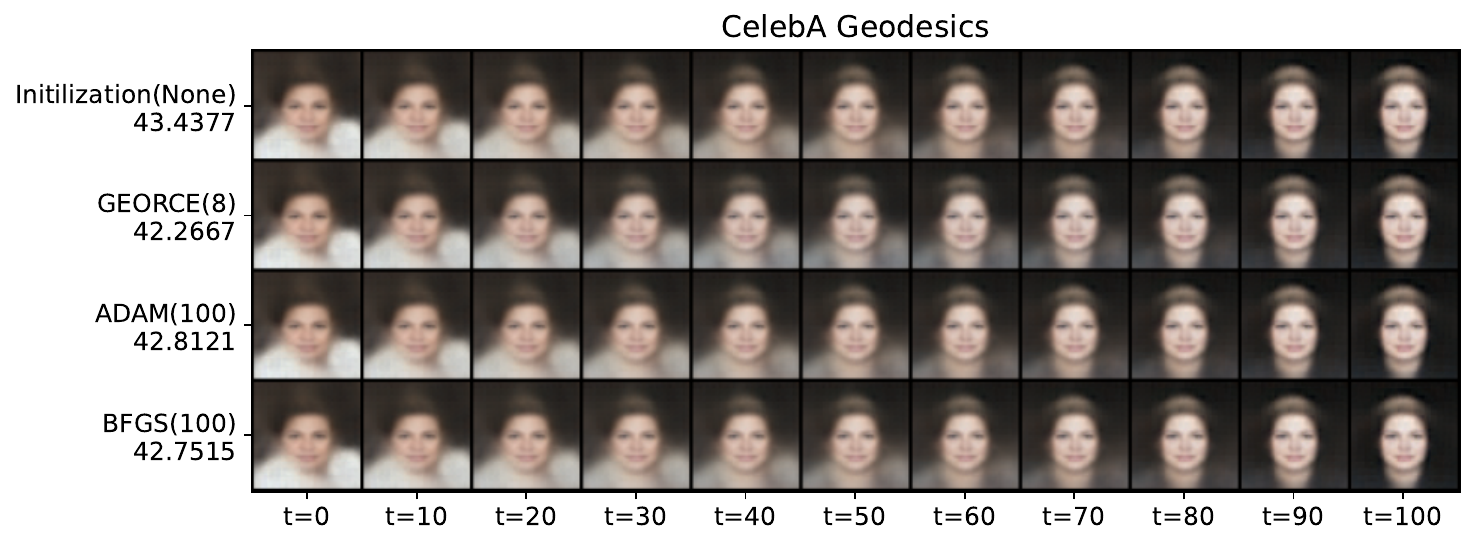}
    \caption{Geodesics for a variational autoencoder \citep{kingma2022autoencodingvariationalbayes} trained on the CelebA dataset \citep{liu2015faceattributes} using \textit{GEORCE} and alternative methods. All algorithms are terminated if the $\ell^{2}$-norm of the gradient of the discretized energy functional in Eq.~\ref{eq:disc_const_energy} is less than $10^{-3}$, or if the number of iterations exceeds $100$. The algorithms are written to the left with number of iterations in parenthesis and length to the right. The first-axis shows the grid number for the image sequence. The details of the manifolds and experiments can be found Appendix~\ref{ap:vae_architecture}}
    \label{fig:celeba_32}
    \vspace{-1.0em}
\end{figure}

\begin{sidewaystable}
    \begin{tabular*}{\textheight}{@{\extracolsep\fill}lcccccc}
        \toprule%
        & \multicolumn{2}{c}{\textbf{BFGS (T=100)}} & \multicolumn{2}{c}{\textbf{ADAM  (T=100)}} & \multicolumn{2}{c}{\textbf{GEORCE  (T=100)}} \\\cmidrule{2-3}\cmidrule{4-5}\cmidrule{6-7}%
        Riemannian Manifold & Length & Runtime & Length & Runtime & Length & Runtime \\
        \midrule
        VAE MNIST & $10.3075$ & $1407.5743 \pm 6.4430$ & $10.3312$ & $645.5201 \pm 2.6648$ & $\pmb{10.3074}$ & $\pmb{67.6149} \pm \pmb{ 0.0304 }$ \\ 
        VAE SVHN & $-$ & $-$ & $19.9855$ & $13325.2285 \pm 123.4978$ & $\pmb{19.9346}$ & $\pmb{882.4120} \pm \pmb{ 1.8839 }$ \\  
        VAE CelebA & $-$ & $-$ & $-$ & $-$ & $\pmb{42.2666}$ & $\pmb{2222.5259} \pm \pmb{ 8.2988 }$ \\ 
        \bottomrule
    \end{tabular*}
    \caption{The table shows the length of the estimated geodesics for the \textit{BFGS}-algorithm, \textit{ADAM} and \textit{GEORCE} on a CPU. The methods were terminated if the $\ell^{2}$-norm of the gradient was less than $10^{-3}$ or $1,000$ iterations have been reached. When the computational time was longer than $24$ hour, the value is set to $-$.}
    \label{tab:vae_table}
\end{sidewaystable}
\clearpage

\section{Conclusion}
In this paper, we have introduced the algorithm \textit{GEORCE} for estimating geodesics between any given points on Riemannian and Finsler manifolds by formulating the discretized energy functional as a control problem. We have proved that \textit{GEORCE} has global convergence and asymptotic quadratic local convergence. \textit{GEORCE} scales cubicly in the manifold dimension and only linearly in the number of grid points. Empirically, we have shown that \textit{GEORCE} exhibits superior convergence in terms of runtime and accuracy compared to other geodesic optimization algorithms for Riemannian and Finsler manifolds across both dimensions and manifolds.

\textbf{Limitations:} \textit{GEORCE} exhibits both global convergence and local asymptotic quadratic convergence requiring few iterations. However, \textit{GEORCE} requires evaluating the inverse matrix of the metric matrix function in each iteration, which can be numerically expensive. In addition, \textit{GEORCE} uses line search to determine the step size in each iteration, and if it is numerically expensive to evaluate the discretized energy functional, then this can potentially increase the runtime of \textit{GEORCE}. Further research should therefore investigate efficient approximations of the inverse metric matrix function as well as possible adaptive updating schemes for the step size to increase computational efficiency for high-dimensional problems.

\backmatter

\bmhead{Supplementary information}

The supplementary information in the paper consists of the appendix for proofs, details on experiments and manifolds as well as additional experiments.

\bmhead{Acknowledgements}
Many thanks to Prof.\@ Steen Markvorsen and Dr.\@ Andreas A. Bock for comments and corrections in the process of writing this paper. Especially, a thanks to Prof.\@ Steen Markvorsen for hinting at the possible extension to Finsler geometry. 
SH is supported by a research grant (42062) from VILLUM FONDEN. This project received funding from the European Research Council (ERC) under the European Union’s Horizon research and innovation programme (grant agreement 101125993). The work was partly funded by the Novo Nordisk Foundation through the Center for Basic Machine Learning Research in Life Science (NNF20OC0062606).

\section*{Declarations}

\begin{itemize}
\item Funding: Research grant 42062 from VILLUM FONDEN;  funding from the European Research Council (ERC) under the European Union’s Horizon research and innovation programme (grant agreement 101125993); funding from the Novo Nordisk Foundation through the Center for Basic Machine Learning Research in Life Science (NNF20OC0062606).
\item Conflict of interest/Competing interests: NA
\item Ethics approval and consent to participate: NA
\item Consent for publication: Currently under review.
\item Data availability: All data used are described in the appendix.
\item Materials availability: NA
\item Code availability: The code can be found at \url{https://github.com/FrederikMR/georce}
\item Author contribution: FMR derived the algorithm and implemented the method as well as benchmarks. SH supervised the project with comments and corrections. The paper was written jointly by SH and FMR. All authors reviewed the manuscript.
\end{itemize}
\clearpage

\bibliography{sn-article}

\clearpage
\begin{appendices}
    \section{Proofs and derivations} \label{ap:proofs}
    \subsection{Proof of update scheme for Finslerian geometry}
    \label{ap:finsler_update}
    Consider the following control problem in the Finslerian case.
\begin{equation} 
    \begin{split}
        \min_{x_{0:T}} \quad &\sum_{t=0}^{T-1} u_{t}^{\top}G\left(x_{t}, u_{t}\right)u_{t},\\
        \text{s.t.} \quad &x_{t+1}=x_{t}+u_{t}, \quad t=0,\dots,T-1, \\
        &x_{0}=a,x_{T}=b,
    \end{split}
\end{equation}
By similar computation as in Proposition.~\ref{prop:riemann_cond} with $G$ is now dependent on both $x_{t}$ and $u_{t}$, we get the following equation system to be solved in iteration $i+1$ derived from Pontryagin's maximum principle by fixing variables
\begin{equation} \label{eq:finsler_energy_zero_point_problem}
    \begin{split}
    &\nu_{t}+\mu_{t} = \mu_{t-1}, \quad t=1,\dots,T-1, \\
    &2G_{t}u_{t}+\zeta_{t}+\mu_{t} = 0, \quad t=0,\dots,T-1, \\
    &\sum_{t=0}^{T-1}u_{t}=b-a, \\
    &\nu_{t} := \restr{\nabla_{y}\left(u_{t}^{\top}G\left(y,u_{t}\right)u_{t}\right)}{y=x_{t}^{(i)},u_{t}=u_{t}^{(i)}}, \quad t=1,\dots,T-1, \\
    &\zeta_{t} := \restr{\nabla_{y}\left(u_{t}^{\top}G\left(x_{t},y\right)u_{t}\right)}{x_{t}=x_{t}^{(i)},u_{t}=u_{t}^{(i)}, y=u_{t}^{(i)}}, \quad t=1,\dots,T-1, \\
    &G_{t} := G\left(x_{t}^{(i)}, u_{t}^{(i)}\right), \quad t=0,\dots,T-1,
    \end{split}
\end{equation}
The related state variables (before line search) are found by the state equation, $x_{t+1} = x_{t}+u_{t}$ for $t=0,\dots,T-1$ with $x_{0}=a$. Using the same approach as in the proof of Proposition~\ref{prop:update_scheme}, then Eq.~\ref{eq:finsler_energy_zero_point_problem} is equivalent to
\begin{equation}
    \begin{split}
        &\mu_{T-1} = \left(\sum_{t=0}^{T-1}G_{t}^{-1}\right)^{-1}\left(2(a-b)-\sum_{t=0}^{T-1}G_{t}^{-1}\left(\zeta_{t}+\sum_{t>j}^{T-1}\nu_{j}\right)\right), \\
        &u_{t} = -\frac{1}{2}G_{t}^{-1}\left(\mu_{T-1}+\zeta_{t}+\sum_{j>t}^{T-1}\nu_{j}\right), \quad t=0,\dots,T-1, \\
        &x_{t+1} = x_{t}+u_{t}, \quad t=0,\dots,T-1, \\
        &x_{0}=a,
    \end{split}
\end{equation}
where it is exploited that the fundamental tensor is positive definite.
    \subsection{Extension to Finslerian manifolds} \label{ap:finsler_proof}
    The proof for global convergence hold similar for \textit{GEORCE} in the Finsler case. Let $E_{F}$ denote the corresponding discretized Finsler energy functional. For the Finsler case the optimally condition becomes
\begin{equation}
    \begin{split}
        \restr{\nabla_{x_{t}}E_{F}(x_{t},u_{t0})}{(x_{t},u_{t})=\left(x_{t}^{(i)}, u_{t}^{(i)}\right)} &= \mu_{T-1}-\mu_{t}, \quad t=1,\dots,T-1, \\
        \restr{\nabla_{u_{t}}E_{F}(x_{t},u_{t})}{(x_{t},u_{t})=\left(x_{t}^{(i)}, u_{t}^{(i+1)}\right)} &= 2G\left(x_{t}^{(i)},u_{t}^{(i)}\right)u_{t}^{(i+1)}+\zeta_{t}=-\mu_{t}, \\
        &\quad t=0,\dots,T-1
    \end{split}
    \label{eq:finsler_optimality}
\end{equation}
The proof of global convergence is completely similar for the Riemannian and Finsler case except that Eq.~\ref{eq:finsler_changed} in the Finsler case becomes
\begin{equation}
    \begin{split}
        \mu_{t} + \restr{\nabla_{u_{t}}E_{F}(x_{t},u_{t})}{(x_{t},u_{t})=\left(x_{t}^{(i)},u_{t}^{(i)}\right)} &= -\restr{\nabla_{u_{t}}E_{F}(x_{t},u_{t})}{(x_{t},u_{t})=\left(x_{t}^{(i+1)},u_{t}^{(i+1)}\right)} \\
        &+\restr{\nabla_{u_{t}}E_{F}(x_{t},u_{t})}{(x_{t},u_{t})=\left(x_{t}^{(i+1)},u_{t}^{(i)}\right)} \\
        &= -\left(G\left(x_{t}^{(i)},u_{t}^{(i)}\right)u_{t}^{(i+1)}+\zeta_{t}\right) \\
        &+\left(2G\left(x_{t}^{(i)},u_{t}^{(i)}\right)+\zeta_{t}\right) \\
        &= -2G\left(x_{t}^{(i)},u_{t}^{(i)}\right)\left(u_{t}^{(i+1)}-u_{t}^{(i)}\right),
    \end{split}
    \label{eq:finsler_update}
\end{equation}
by using the properties of \textit{GEORCE} in the Finsler case in Eq.~\ref{eq:finsler_optimality}. Eq.~\ref{eq:finsler_update} is completely similar to Eq.~\ref{eq:finsler_changed} in the Riemannian case, and from that the global convergence proof is completely similar to the Riemannian case.
    \section{Algorithms}
    \subsection{GEORCE for Finsler manifolds} \label{ap:georce_al_finsler}
    In Algorithm~\ref{al:georcef} we show \textit{GEORCE} for Finsler manifolds in pseudo-code using the update scheme in Eq.~\ref{eq:finsler_energy_update_schem}.

\begin{algorithm}[hbt]
    \caption{GEORCE for Finsler Manifolds}
    \label{al:georcef}
    \begin{algorithmic}[1]
        \State \textbf{Input}: $\mathrm{tol}$, $T$
        \State \textbf{Output}: Geodesic estimate $x_{0:T}$
        \State Set $x_{t}^{(0)}\leftarrow a+\frac{b-a}{T}t$, $u_{t}^{(0)}\leftarrow \frac{b-a}{T}$ for $t=0.,\dots,T-1$  and $i \leftarrow 0$
        \While{$\norm{\restr{\nabla_{y}E(y)}{y=x_{t}^{(i)}}}_{2} > \mathrm{tol}$}
        \State $G_{t} \leftarrow G\left(x_{t}^{(i)}, u_{t}^{(i)}\right)$ for $t=0,\dots,T-1$
        \State $\nu_{t} \leftarrow \restr{\nabla_{y}\left(u_{t}^{(i)}G\left(y, u_{t}^{(i)}\right)u_{t}^{(i)}\right)}{y=x_{t}^{(i)}}$ for $t=1,\dots,T-1$
        \State $\zeta_{t} \leftarrow \restr{\nabla_{y}\left(u_{t}^{(i)}G\left(x_{t}^{(i)}, y\right)u_{t}^{(i)}\right)}{y=u_{t}^{(i)}}$ for $t=1,\dots,T-1$
        \State $\mu_{T-1} \leftarrow \left(\sum_{t=0}^{T-1}G_{t}^{-1}\right)^{-1}\left(2(a-b)-\sum_{t=0}^{T-1}G_{t}^{-1}\left(\zeta_{t}+\sum_{t>j}^{T-1}\nu_{j}\right)\right)$
        \State $u_{t} \leftarrow -\frac{1}{2}G_{t}^{-1}\left(\mu_{T-1}+\zeta_{t}+\sum_{j>t}^{T-1}\nu_{j}\right)$ for $t=0,\dots,T-1$
        \State $x_{t+1} \leftarrow x_{t}+u_{t}$ for $t=0,\dots,T-1$
        \State Using line search find $\alpha^{*}$ for the following optimization problem with the discrete energy functional $E_{F}$
        \begin{equation*}
            \begin{split}
                \alpha^{*} = \argmin_{\alpha}\quad &E_{F}\left(x_{0:T}\right) \quad \text{(exact line-search)} \\
                \text{s.t.} \quad &x_{t+1}=x_{t}+\alpha u_{t}+(1-\alpha)u_{t}^{(i)}, \quad t=0,\dots,T-1, \\
                &x_{0}=a.
            \end{split}
        \end{equation*}
        \State Set $u_{t}^{(i+1)} \leftarrow \alpha^{*}u_{t}+(1-\alpha^{*})u_{t}^{(i)}$ for $t=0,\dots,T-1$
        \State Set $x_{t+1}^{(i+1)}\leftarrow x_{t}^{(i+1)}+u_{t}^{(i+1)}$ for $t=0,\dots,T-1$
        \State $i \leftarrow i+1$
        \EndWhile
        \State return $x_{t}$ for $t=0,\dots,T-1$
    \end{algorithmic}
\end{algorithm}
    \subsection{Sparse Newton Method} \label{ap:newton_method}
    In this section we derive a sparse Newton method for minimizing Eq.~\ref{eq:disc_const_energy}. We consider the Riemannian case, but note that it is easily generalized to the Finslerian case. The Hessian of the Eq.~\ref{eq:disc_const_energy} we get that
\begin{equation*}
    \begin{split}
        \partial^{2}_{x_{t}x_{t}}E\left(x_{0:T}\right) &= \partial^{2}_{x_{t}x_{t}}\restr{u_{t}^{\top}G(x_{t})u_{t}}{u_{t}=x_{t+1}-x_{t}} + 2G(x_{t-1}) - 2G(x_{t}) \\
        &+ 4 \nabla_{x_{t-1}}\restr{G(x_{t-1})u_{t}}{u_{t}=x_{t}-x_{t-1}}-4\nabla_{x_{t}}\restr{G(x_{t})u_{t}}{u_{t}=x_{t+1}-x_{t}}, \quad t=1,\dots,T-1, \\
        \partial^{2}_{x_{t}x_{t+1}}E\left(x_{0:T}\right) &= 2 \nabla_{x_{t}}\restr{G(x_{t})u_{t}}{u_{t}=x_{t+1}-x_{t}}, \quad t=1,\dots,T-2, \\
        \partial^{2}_{x_{t+1}x_{t}}E\left(x_{0:T}\right) &= 2 \restr{\left(G(x_{t})u_{t}\right)^{\top}}{u_{t}=x_{t+1}-x_{t}}, \quad t=1,\dots,T-2.
    \end{split}
\end{equation*}
where $\partial^{2}_{x_{i},x_{j}}$ denotes the second order derivative. From this we see that the Hessian can be written as
\begin{equation} \label{eq:newton_block_structure}
    H =
    \begin{pmatrix}
        H_{11} & H_{12} & 0 & 0 & 0 & 0 & 0 &\dots & 0 \\
        H_{21} & H_{22} & H_{23} & 0 & 0 & 0 & 0 & \dots & 0 \\
        0 & H_{23} & H_{33} & H_{34} & 0 & 0 & 0 & \dots & 0 \\
        0 & 0 & H_{43} & H_{33} & H_{45} & 0 & 0 & \dots & 0 \\
        0 & 0 & 0 & H_{54} & H_{55} & H_{56} & 0 & \dots & 0 \\
        0 & 0 & 0 & 0 & H_{65} & H_{66} & H_{67} & \dots & 0 \\
        \vdots & \ddots & \ddots & \ddots & \ddots & \ddots & \ddots & \ddots & \vdots\\ 
        0 & 0 & 0 & 0 & 0 & 0 & \dots &H_{(T-2)(T-2)} & H_{(T-1)(T-2)} \\
        0 & 0 & 0 & 0 & 0 & 0 & \dots & H_{(T-2)(T-1)} & H_{(T-1)(T-1)} \\
    \end{pmatrix},
\end{equation}
where we define
\begin{equation*}
    H_{ij} = \partial^{2}_{x_{i}x_{j}} E(x_{0:T}),
\end{equation*}
A Newton step has the following form at iteration $j$.
\begin{equation} \label{eq:newton_step}
    s^{(j)} = -H^{-1}\nabla_{x_{1:(T-1)}}E(x_{0:T}),
\end{equation}
To simplify notation, define $s_{i} := -\nabla_{x_{i}}E(x_{0:T})$. Using the block structure in Eq.~\ref{eq:newton_block_structure}, we can recursively solve Eq.~\ref{eq:newton_step} similar to the Thomas Algorithm \citep{thomas_al}.
\begin{equation} \label{eq:newton_solve_block}
    \begin{split}
        s_{i} &\leftarrow H_{ii}^{-1}s_{i}, \\
        P &\leftarrow H_{ii+1}^{\top}, \quad i+1 < T\\
        H_{ii+1} &\leftarrow H_{ii}^{-1}H_{ii+1}, \quad i+1 < T\\
        H_{i+1,i+1} &\leftarrow H_{i+1,i+1} - H_{i+1,i}H_{i,i+1} = H_{i+1,i+1}-PH_{i,i+1}, \quad i+1 < T\\
        s_{i+1} &\leftarrow s_{i+1} - H_{i+1,i}s_{i} = s_{i+1} - Ps_{i}, \quad i+1 < T.
    \end{split}
\end{equation}
such that the Newton step is updated for $i=T-2,\dots,1$
\begin{equation} \label{eq:newton_update_block}
    s_{i} \leftarrow s_{i} - H_{i,i+1}s_{i+1}.
\end{equation}
Note that we efficiently solve the equations in Eq.~\ref{eq:newton_solve_block} as
\begin{equation} \label{eq:newton_inversion}
    \begin{split}
        H_{ii}y_{i} &= \left(H_{ii+1} | s_{i} \right), \quad i=1,\dots,T-2, \\
        H_{T-1} y_{T-1} &= s_{T-1}, \\
    \end{split}
\end{equation}
such that only one linear system of equations has to be solved in each iteration. The Newton method does not have global convergence. In Algorithm~\ref{al:sparse_newton} we write the Sparse Newton algorithm in pseudo-code switching between the Newton method and gradient descent with line-search. Note that the above is easily extended to the Finslerian case, where $G$ denotes the fundamental tensor and depends on both $x$ and $u$.
\begin{algorithm}[hbt]
    \caption{Sparse Newton Method}
    \label{al:sparse_newton}
    \begin{algorithmic}[1]
        \State \textbf{Input}: $\mathrm{tol}$, $T$
        \State \textbf{Output}: Geodesic estimate $x_{0:T}$
        \State Set $x_{t}^{(0)}\leftarrow a+\frac{b-a}{T}t$ for $t=0.,\dots,T-1$  and $j \leftarrow 0$
        \While{$\norm{\restr{\nabla_{y}E(y)}{y=x_{t}^{(i)}}}_{2} > \mathrm{tol}$}
        \State Compute $\left\{s_{t}^{\mathrm{grad}}, s_{t}^{\mathrm{newton}}\right\}_{t=1,\dots,T-2}$ using Eq.~\ref{eq:newton_solve_block} and Eq.~\ref{eq:newton_update_block}.
        \If $\sum_{t=1}^{T-1}s_{t}^{\mathrm{grad}}\left(s_{t}^{\mathrm{newton}}\right)^{\top} \geq 0$
        \State $s_{t} = s_{t}^{\mathrm{newton}}$ for $t=1,\dots,T-1$ \\
        \Else
        \State $s_{t} = s_{t}^{\mathrm{grad}}$ for $t=1,\dots,T-1$ 
        \EndIf
        \State Perform line-search for $\alpha^{*} = \argmin_{0 \leq \alpha \leq 1} E\left(x_{1:(T-1)}+\alpha s_{1:(T-1)}\right)$ with 
        \begin{equation*}
            x_{t}^{(j+1)} \leftarrow x_{t}^{(j)} + \alpha^{*} s_{t}, \quad t=1,\dots,T-1.
        \end{equation*}
        \State $j \leftarrow j+1$
        \EndWhile
        \State return $x_{t}^{(j)}$ for $t=0,\dots,T-1$
    \end{algorithmic}
\end{algorithm}
Note that the sparse Newton algorithm in Algorithm~\ref{al:sparse_newton} has complexity $\mathcal{O}\left(Td^{3}\right)$ completely similar to GEORCE.

Since the Hessian of the energy is not necessarily positive definite, the Newton scheme can be numerically unstable. To avoid this, we represent in Algorithm~\ref{al:sparse_reg_newton} a regularized Newton method by adding suitable scaling in the diagonal.
\begin{algorithm}[hbt]
    \caption{Sparse Regularized Newton Method}
    \label{al:sparse_reg_newton}
    \begin{algorithmic}[1]
        \State \textbf{Input}: $\mathrm{tol}$, $T$, $\lambda$, $\kappa$ \\
        \State \textbf{Output}: Geodesic estimate $x_{0:T}$ \\
        \State Set $x_{t}^{(0)}\leftarrow a+\frac{b-a}{T}t$ for $t=0.,\dots,T-1$ and $j \leftarrow 0$ \\
        \While{$\norm{\restr{\nabla_{y}E(y)}{y=x_{t}^{(i)}}}_{2} > \mathrm{tol}$}
        \State $H_{ii} \leftarrow \lambda I + \partial^{2}_{x_{i}x_{i}}E(x_{0:T})$ \\
        \State $H_{ij} \leftarrow \partial^{2}_{x_{i}x_{j}}E(x_{0:T})$ \\
        \State Compute $\left\{s_{t}^{\mathrm{grad}}, s_{t}^{\mathrm{newton}}\right\}_{t=1,\dots,T-2}$ using Eq.~\ref{eq:newton_solve_block} and Eq.~\ref{eq:newton_update_block}. \\
        \If $\sum_{t=1}^{T-1}s_{t}^{\mathrm{grad}}\left(s_{t}^{\mathrm{newton}}\right)^{\top} \geq 0$ \\
        \State $s_{t} = s_{t}^{\mathrm{newton}}$ for $t=1,\dots,T-1$ \\
        \State $\lambda \leftarrow \kappa \lambda$
        \Else
        \State $s_{t} = s_{t}^{\mathrm{grad}}$ for $t=1,\dots,T-1$ \\
        \State $\lambda \leftarrow \sfrac{\lambda}{\kappa}$
        \EndIf
        \State Perform line-search for $\alpha^{*} = \argmin_{0 \leq \alpha \leq 1} E\left(x_{1:(T-1)}+\alpha s_{1:(T-1)}\right)$ with 
        \begin{equation*}
            x_{t}^{(j+1)} \leftarrow x_{t}^{(j)} + \alpha^{*} s_{t}, \quad t=1,\dots,T-1.
        \end{equation*}
        \State $j \leftarrow j+1$
        \EndWhile
        \State return $x_{t}^{(j)}$ for $t=0,\dots,T-1$
    \end{algorithmic}
\end{algorithm}
Note that the Newton method for the discretized energy was also proposed in \citep{noakes_join_geodesics}, which also formulates the Hessian as a block matrix similar to Eq.~\ref{eq:newton_block_structure}. However, they do not derive an adaptive scheme to solve the linear equation in the Newton step or propose the regularized version in Algorithm~\ref{al:sparse_reg_newton}.
    \clearpage
    \section{Choice of initialization curve} \label{ap:init_curve}
\begin{wrapfigure}[19]{r}{0.50\textwidth}
    \vspace{-5.0mm}
    \centering
    \includegraphics[width=0.50\textwidth]{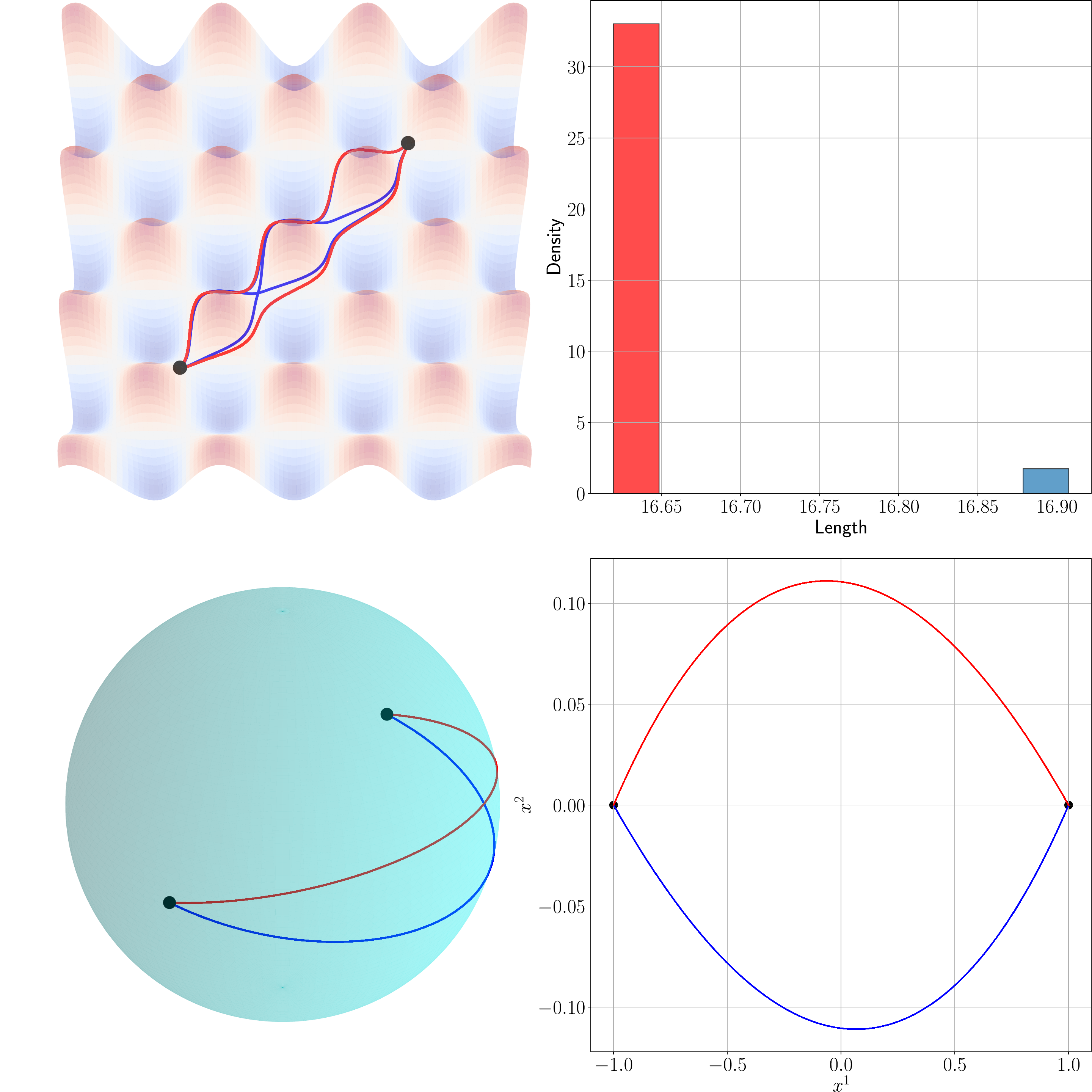}
    \vspace{-3.0mm}
    \caption{The first row shows the estimated curves using \textit{GEORCE} for the egg-tray, i.e. $f(x,y)=\left(x,y,2\cos x \cos y\right)$, while the second row shows the estimated curves for the sphere for different initializations of the initial curve.}
    \vspace{5.0mm}
    \label{fig:initilization_curve}
\end{wrapfigure}
In this section, we show the effect of the choice of initialization curve in \textit{GEORCE}. In Algorithm~\ref{al:georce} and the experiments, the initial curve is a straight line between the start and end point. However, the initial curve is not restricted to this and can in principle be any curve assuming that it is defined in the local chart. To illustrate the effect of the initial curve, we will consider the egg-tray with the parametrization $f(x,y)=\left(x,y,2\cos x \cos y\right)$, and the sphere for two antipodal points on the equator. For the egg-tray we consider setting the initial curve randomly as
\begin{equation*}
    x_{t}^{(0)} = t\frac{b-a}{T}+a + \epsilon_{t}, \quad \epsilon_{t} \sim \mathcal{N}\left(0,I\right),
\end{equation*}
while we consider for the sphere the random initial curve.
\begin{equation*}
    x_{t}^{(0)} = t\frac{b-a}{T}+a + \pmb{1}\mathrm{Sign}\left(\epsilon\right), \quad \epsilon \sim \mathcal{N}\left(0,1\right),
\end{equation*}
For each random initial curve, we estimate the energy-minimizing curve using \textit{GEORCE} and show the result in Fig.~\ref{fig:initilization_curve} for $100$ random initializations. The first row shows the results for the egg-tray, where the rightmost figure is the distribution of length for the computed curves. We display the corresponding curves in the same color as in the histogram. In general, we see that for a small set of curves \textit{GEORCE} has found a local minimum, where the curves move on the inside of the egg-tray rather than on the outside of the egg-tray, which appears to be the global minimum. This illustrates how \textit{GEORCE} is guaranteed to converge to a local minimum and not a global minimum, and that this can be checked by expecting different initialization curves. For the sphere, there are infinitely many length-minimizing curves, and since we add the sign of a standard normal distributed variable for the random curves, we see that \textit{GEORCE} tends to move in the direction of the closest local minimum.

    \section{Manifolds} \label{ap:manifold_description}
    Table~\ref{tab:manifold_description} contains a description of the Riemannian manifolds shown in the paper.

\begin{table}[ht]
    \centering
    \scriptsize
    \begin{tabular}{p{2cm} | p{5cm} | p{2cm} | p{4cm}}
    \textbf{Manifold} & \textbf{Description} & \textbf{Parameters} & \textbf{Local Coordinates} \\
    \hline
    $\mathbb{S}^{n}$ & The n-sphere, $\{x \in \mathbb{R}^{n+1} \;|\; ||x||_{2}=1\}$ & - & Stereographic coordinates \\
    \hline
    $E(n)$ & The n-Ellipsoid, $\{x \in \mathbb{R}^{n+1} \;|\; ||p\odot x||_{2}=1\}$ & Half-axes: $p=\left(0.5,\dots,1\right)$ equally distributed points. & Stereographic coordinates \\
    \hline
    $\mathbb{T}^{2}$ & Torus & Major radius $R=3.0$ and minor radius $r=1.0$ & Parameterized by $\left(\theta,\phi\right)$ with $x=(R+r\cos \theta)\cos\phi, y=(R+r\cos \theta) \sin \phi, z=r \sin \theta$. \\
    \hline
    $\mathbb{H}^{2}$ & 2 dimensional Hyperbolic Space embedded into Minkowski pace $\mathbb{R}^{3}$ & - & Parameterized by $\left(\alpha,\beta\right)$ with $x=\cosh\alpha, y=\sinh\alpha\cos\beta, z=\sinh\alpha\sin\beta$. \\
    \hline
    $P(n)$ & Paraboloid of dimension $n$ & - & Parameterized by standard coordinates by $\left(x^{1},\dots,x^{n},\sum_{i=1}^{n}x_{i}^{2}\right)$ \\
    \hline
    $\mathcal{P}(n)$ & Symmetric positive definite matrices of size $n^{2}$ & - & Embedded into $\mathbb{R}^{n^{2}}$ by the mapping $f(x) = l(x)l(x)^{T}$, where $l: \mathbb{R}^{n(n+1)/2} \rightarrow \mathbb{R}^{n \times n}$ maps $x$ into a lower triangle matrix consisting of the elements in $x$. \\
    \hline
    'Egg-tray' & A parameterized surface in shape of an egg tray & - & Parameterized by $(x,y)$ with $x=x, y=y, z=2\cos x \cos y$. \\
    \hline
    Gaussian Distribution & Parameters of the Gaussian distribution equipped with the Fischer-Rao metric & - & Parametrized by $(\mu,\sigma) \in \mathbb{R} \times \mathbb{R}_{+}$. \\
    \hline
    Fr\'echet Distribution & Parameters of the Fr\'echet distribution equipped with the Fischer-Rao metric & - & Parametrized by $(\beta,\lambda) \in \mathbb{R}_{+} \times \mathbb{R}_{+}$. \\
    \hline
    Cauchy Distribution & Parameters of the Cauchy distribution equipped with the Fischer-Rao metric & - & Parametrized by $(\mu,\sigma) \in \mathbb{R} \times \mathbb{R}_{+}$. \\
    \hline
    Pareto Distribution & Parameters of the Pareto distribution equipped with the Fischer-Rao metric & - & Parametrized by $(\theta,\alpha) \in \mathbb{R}_{+} \times \mathbb{R}_{+}$. \\
    \hline
    VAE MNIST & Variational-Autoencoder equipped with the pull-back metric for MNIST-data & - & Parametrized by the encoded latent space. \\
    \hline
    VAE SVHN & Variational-Autoencoder equipped with the pull-back metric for SVHN-data & - & Parametrized by the encoded latent space. \\
    \hline
    VAE CelebA & Variational-Autoencoder equipped with the pull-back metric for CelebA-data & - & Parametrized by the encoded latent space. \\
    \hline
    \end{tabular}
    \caption{Description of the manifolds used in the paper.}
    \label{tab:manifold_description}
\end{table}

In the Finsler cases we consider a Riemannian manifold with a background metric, $G$, equipped with a force-field.

\begin{equation*}
    f(x) = \frac{\sin x \odot \cos x}{(\cos x)^{\top} G(x) \cos x},
\end{equation*}

where $x$ denotes the local coordinates of the manifold. In this case the corresponding metric is a Randers metric, which is a special case of Riemannian metrics, of the form \citep{Piro_2021}

\begin{equation}
    \begin{split}
        F &= \sqrt{a_{ij}\dot{r}^{i}\dot{r}^{j}}+b_{i}\dot{r}^{i} \\
        a_{ij} &= g_{ij} \lambda + f_{i}f_{j}\lambda^{2} \\
        b_{i} &= -f_{i}\lambda \\
        f_{i} &= g_{ij}f^{j} \\
        \lambda^{-1} &= v_{0}^{2}-g_{ij}f^{i}f^{j},
    \end{split}
\end{equation}

where $g_{ij}$ denotes the elements of the Riemannian background metric, $f^{j}$ denotes the $j$th element of the force field, while $v_{0}$ denotes the initial velocity Riemannian length of an object moving with constant velocity $v$ on the surface. For the manifold to be a proper Finsler manifold it is required that $||f||_{g} < v_{0}$.
    \section{Variational autoencoders} \label{ap:vae}
    \subsection{Architecture} \label{ap:vae_architecture}
    We train a Variational-Autoencoder (VAE) \citep{kingma2022autoencodingvariationalbayes}, where we use the normal distribution as a variational family to approximate the data distribution. We train a VAE for each of the following datasets

\begin{itemize}
    \item MNIST data \citep{deng2012mnist}: The data consists of $28 \times 28$ images of handwritten digits between 0 and 9. The data consists of $60,000$ training images and $10,000$ testing images.
    \item SVHN data \citep{svhn}: The data consists of $32 \times 32 \times 3$ images of house numbers obtained through Google street view. The data consists of $73,257$ digits for training and $26,032$ digits for testing. We use approximately $80\%$ of the training data for training similar to \citep{shao2017riemannian}.
    \item CelebA data \citep{liu2015faceattributes}: The data consists of $202,599$ images of famous people of size $64 \times 64 \times 3$, where we use approximately $80\%$ for training similar to \citep{shao2017riemannian}.
\end{itemize}

The architectures for the VAE for MNIST data, SVHN data and CelebA data are shown in Table~\ref{tab:mnist_vae_architecture}, Table~\ref{tab:svhn_vae_architecture} and Table~\ref{tab:celeba_vae_architecture}, respectively. The architecture is similar to \citep{shao2017riemannian} with some modifications. All models are trained for $50,000$ epochs on a GPU described in Appendix~\ref{ap:hardware}.

\begin{table}[!ht]
    \centering
    \begin{tabular}{c}
        \hline
        \multicolumn{1}{c}{\textbf{Encoder}} \\
        \hline
        $\mathrm{Conv}\left(\mathrm{output\_channels}=64, \mathrm{kernel\_shape}=4 \times 4, \mathrm{stride}=2, \mathrm{bias}=\mathrm{False}\right)$ \\ 
        $\mathrm{GELU}$ \\
        $\mathrm{Conv}\left(\mathrm{output\_channels}=64, \mathrm{kernel\_shape}=4 \times 4, \mathrm{stride}=2, \mathrm{bias}=\mathrm{False}\right)$ \\ 
        $\mathrm{GELU}$ \\
        $\mathrm{Conv}\left(\mathrm{output\_channels}=64, \mathrm{kernel\_shape}=4 \times 4, \mathrm{stride}=1, \mathrm{bias}=\mathrm{False}\right)$ \\
        $\mathrm{GELU}$ \\
        $\mathrm{Conv}\left(\mathrm{output\_channels}=64, \mathrm{kernel\_shape}=4 \times 4, \mathrm{stride}=2, \mathrm{bias}=\mathrm{False}\right)$ \\
        $\mathrm{GELU}$ \\
        \hline
        \multicolumn{1}{c}{\textbf{Latent Parameters}} \\
        \hline
        $\mu$: $\mathrm{Linear}(\mathrm{out\_feature}=8, \mathrm{bias}=\mathrm{True})$, $\mathrm{Identity}$ \\
        $\sigma$: $\mathrm{Linear}(\mathrm{out\_feature}=8, \mathrm{bias}=\mathrm{True})$, $\mathrm{Sigmoid}$ \\
        \hline
        \multicolumn{1}{c}{\textbf{Decoder}} \\
        \hline
        $\mathrm{Linear}\left(\mathrm{out\_feature}=50, \mathrm{bias}=\mathrm{True}\right)$ \\
        $\mathrm{GELU}$ \\
        $\mathrm{Conv2dTransposed}\left(\mathrm{output\_channels}=64, \mathrm{kernel\_shape}=4 \times 4, \mathrm{stride}=2, \mathrm{bias}=\mathrm{False}\right)$ \\
        $\mathrm{GELU}$ \\
        $\mathrm{Conv2dTransposed}\left(\mathrm{output\_channels}=32, \mathrm{kernel\_shape}=4 \times 4, \mathrm{stride}=1, \mathrm{bias}=\mathrm{False}\right)$ \\
        $\mathrm{GELU}$ \\
        $\mathrm{Conv2dTransposed}\left(\mathrm{output\_channels}=16, \mathrm{kernel\_shape}=4 \times 4, \mathrm{stride}=1, \mathrm{bias}=\mathrm{False}\right)$ \\
        $\mathrm{GELU}$ \\
        $\mathrm{Linear}\left(\mathrm{out\_feature}=784, \mathrm{bias}=\mathrm{True}\right)$ \\
        \hline
    \end{tabular}
    \caption{Architecture for VAE for the MNIST-dataset}
    \label{tab:mnist_vae_architecture}
\end{table}

\begin{table}[!ht]
    \centering
    \begin{tabular}{c}
        \hline
        \multicolumn{1}{c}{\textbf{Encoder}} \\
        \hline
        $\mathrm{Conv}\left(\mathrm{output\_channels}=32, \mathrm{kernel\_shape}=2 \times 2, \mathrm{stride}=2, \mathrm{bias}=\mathrm{False}\right)$ \\ 
        $\mathrm{GELU}$ \\
        $\mathrm{Conv}\left(\mathrm{output\_channels}=32, \mathrm{kernel\_shape}=2 \times 2, \mathrm{stride}=2, \mathrm{bias}=\mathrm{False}\right)$ \\ 
        $\mathrm{GELU}$ \\
        $\mathrm{Conv}\left(\mathrm{output\_channels}=64, \mathrm{kernel\_shape}=2 \times 2, \mathrm{stride}=2, \mathrm{bias}=\mathrm{False}\right)$ \\
        $\mathrm{GELU}$ \\
        $\mathrm{Conv}\left(\mathrm{output\_channels}=64, \mathrm{kernel\_shape}=2 \times 2, \mathrm{stride}=2, \mathrm{bias}=\mathrm{False}\right)$ \\
        $\mathrm{GELU}$ \\
        \hline
        \multicolumn{1}{c}{\textbf{Latent Parameters}} \\
        \hline
        $\mu$: $\mathrm{Linear}(\mathrm{out\_feature}=32, \mathrm{bias}=\mathrm{True})$, $\mathrm{Identity}$ \\
        $\sigma$: $\mathrm{Linear}(\mathrm{out\_feature}=32, \mathrm{bias}=\mathrm{True})$, $\mathrm{Sigmoid}$ \\
        \hline
        \multicolumn{1}{c}{\textbf{Decoder}} \\
        \hline
        $\mathrm{Conv2dTransposed}\left(\mathrm{output\_channels}=64, \mathrm{kernel\_shape}=2 \times 2, \mathrm{stride}=2, \mathrm{bias}=\mathrm{False}\right)$ \\
        $\mathrm{GELU}$ \\
        $\mathrm{Conv2dTransposed}\left(\mathrm{output\_channels}=64, \mathrm{kernel\_shape}=2 \times 2, \mathrm{stride}=2, \mathrm{bias}=\mathrm{False}\right)$ \\
        $\mathrm{GELU}$ \\
        $\mathrm{Conv2dTransposed}\left(\mathrm{output\_channels}=32, \mathrm{kernel\_shape}=2 \times 2, \mathrm{stride}=2, \mathrm{bias}=\mathrm{False}\right)$ \\
        $\mathrm{GELU}$ \\
        $\mathrm{Conv2dTransposed}\left(\mathrm{output\_channels}=32, \mathrm{kernel\_shape}=2 \times 2, \mathrm{stride}=2, \mathrm{bias}=\mathrm{False}\right)$ \\
        $\mathrm{GELU}$ \\
        $\mathrm{Conv2dTransposed}\left(\mathrm{output\_channels}=3, \mathrm{kernel\_shape}=2 \times 2, \mathrm{stride}=2, \mathrm{bias}=\mathrm{False}\right)$ \\
        \hline
    \end{tabular}
    \caption{Architecture for VAE for the SVHN-dataset}
    \label{tab:svhn_vae_architecture}
\end{table}

\begin{table}[!ht]
    \centering
    \begin{tabular}{c}
        \hline
        \multicolumn{1}{c}{\textbf{Encoder}} \\
        \hline
        $\mathrm{Conv}\left(\mathrm{output\_channels}=32, \mathrm{kernel\_shape}=4 \times 4, \mathrm{stride}=2, \mathrm{bias}=\mathrm{False}\right)$ \\ 
        $\mathrm{GELU}$ \\
        $\mathrm{Conv}\left(\mathrm{output\_channels}=32, \mathrm{kernel\_shape}=4 \times 4, \mathrm{stride}=2, \mathrm{bias}=\mathrm{False}\right)$ \\ 
        $\mathrm{GELU}$ \\
        $\mathrm{Conv}\left(\mathrm{output\_channels}=64, \mathrm{kernel\_shape}=4 \times 4, \mathrm{stride}=2, \mathrm{bias}=\mathrm{False}\right)$ \\
        $\mathrm{GELU}$ \\
        $\mathrm{Conv}\left(\mathrm{output\_channels}=64, \mathrm{kernel\_shape}=4 \times 4, \mathrm{stride}=2, \mathrm{bias}=\mathrm{False}\right)$ \\
        $\mathrm{GELU}$ \\
        \hline
        \multicolumn{1}{c}{\textbf{Latent Parameters}} \\
        \hline
        $\mu$: $\mathrm{Linear}(\mathrm{out\_feature}=32, \mathrm{bias}=\mathrm{True})$, $\mathrm{Identity}$ \\
        $\sigma$: $\mathrm{Linear}(\mathrm{out\_feature}=32, \mathrm{bias}=\mathrm{True})$, $\mathrm{Sigmoid}$ \\
        \hline
        \multicolumn{1}{c}{\textbf{Decoder}} \\
        \hline
        $\mathrm{Conv2dTransposed}\left(\mathrm{output\_channels}=64, \mathrm{kernel\_shape}=4 \times 4, \mathrm{stride}=2, \mathrm{bias}=\mathrm{False}\right)$ \\
        $\mathrm{GELU}$ \\
        $\mathrm{Conv2dTransposed}\left(\mathrm{output\_channels}=64, \mathrm{kernel\_shape}=4 \times 4, \mathrm{stride}=2, \mathrm{bias}=\mathrm{False}\right)$ \\
        $\mathrm{GELU}$ \\
        $\mathrm{Conv2dTransposed}\left(\mathrm{output\_channels}=32, \mathrm{kernel\_shape}=4 \times 4, \mathrm{stride}=2, \mathrm{bias}=\mathrm{False}\right)$ \\
        $\mathrm{GELU}$ \\
        $\mathrm{Conv2dTransposed}\left(\mathrm{output\_channels}=32, \mathrm{kernel\_shape}=4 \times 4, \mathrm{stride}=2, \mathrm{bias}=\mathrm{False}\right)$ \\
        $\mathrm{GELU}$ \\
        $\mathrm{Conv2dTransposed}\left(\mathrm{output\_channels}=3, \mathrm{kernel\_shape}=4 \times 4, \mathrm{stride}=4, \mathrm{bias}=\mathrm{False}\right)$ \\
        \hline
    \end{tabular}
    \caption{Architecture for VAE for the CelebA-dataset}
    \label{tab:celeba_vae_architecture}
\end{table}
    \subsection{Additional experiments} \label{ap:vae_experiments}
    In Fig.~\ref{fig:mnist_8} and Fig.~\ref{fig:svhn_32} we show the geodesics between two reconstructed images using the learned VAE for the MNIST VAE and SVHN VAE, respectively, where we show the results using \textit{GEORCE}, the initialization and alternative methods.

\begin{figure}[t!]
    \centering
    \includegraphics[width=1.0\textwidth]{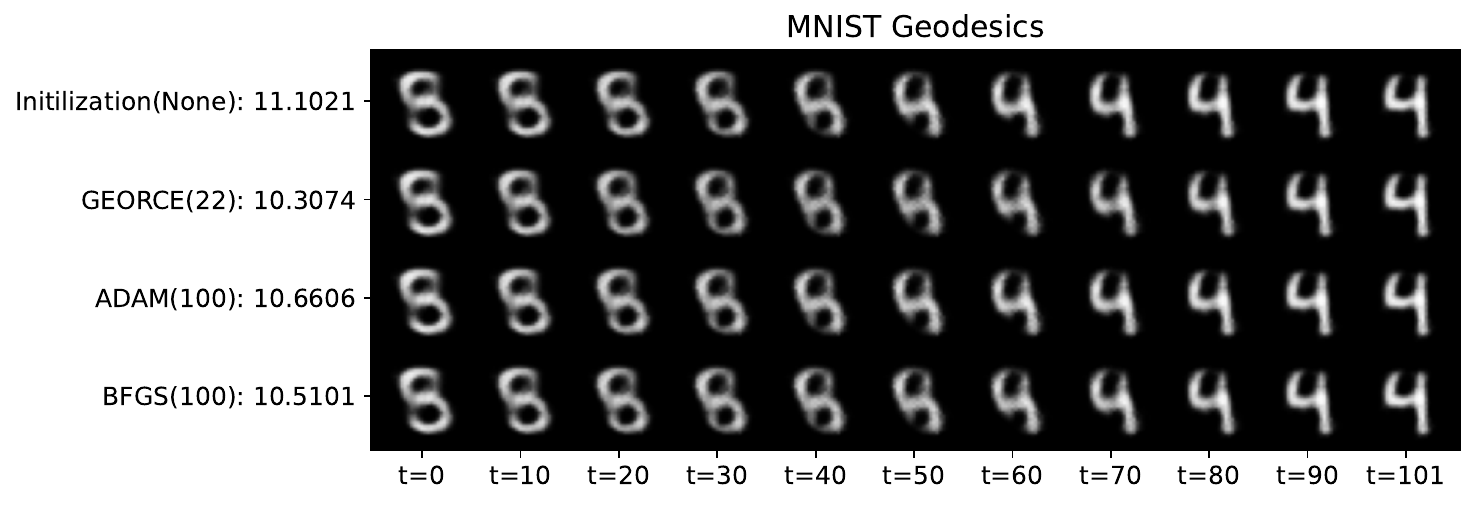}
    \caption{The figure shows the results of using \textit{GEORCE} and alternative methods for constructing geodesics for a Variational-Autoencoder \citep{kingma2022autoencodingvariationalbayes} (VAE) for the MNIST dataset \citep{deng2012mnist} All algorithms are terminated if the $\ell^{2}$-norm of the gradient of the discretized energy functional in Eq.~\ref{eq:disc_const_energy} is less than $10^{-3}$, or if the number of iterations exceeds $100$. We see that \textit{GEORCE} uses less iterations and obtains the smallest length compared to the other algorithms. The details of the manifolds and experiments can be found in the supplementary material.}
    \label{fig:mnist_8}
    \vspace{-1.0em}
\end{figure}

\begin{figure}[t!]
    \centering
    \includegraphics[width=1.0\textwidth]{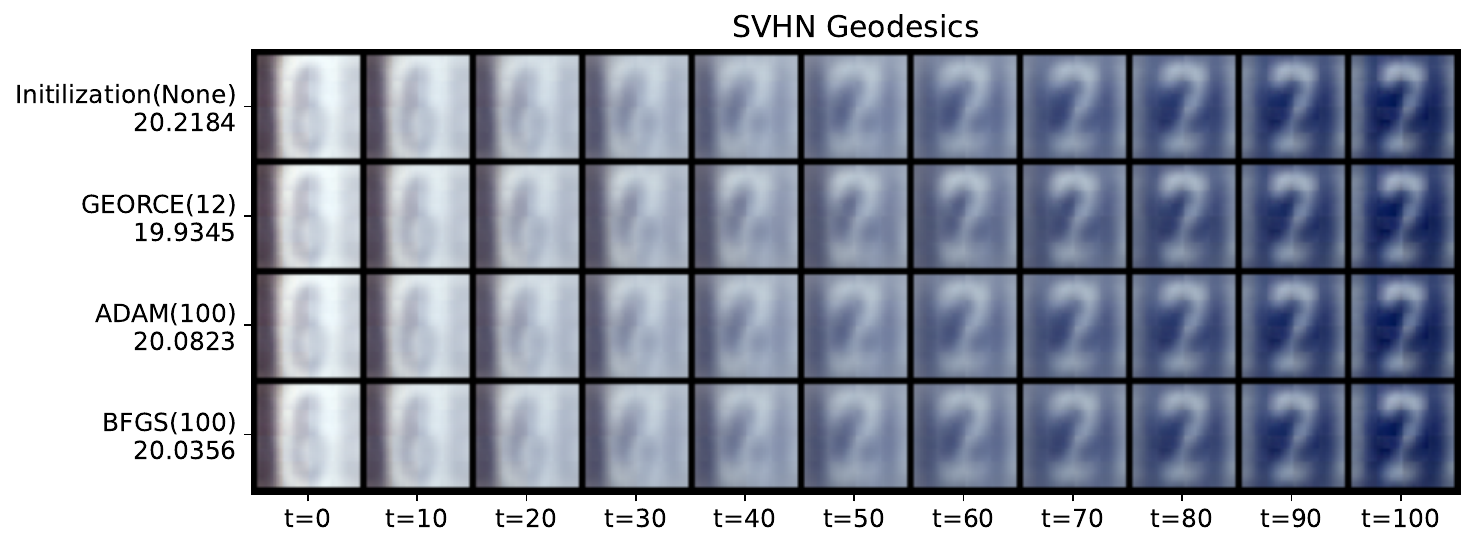}
    \caption{The figure shows the results of using \textit{GEORCE} and alternative methods for constructing geodesics for a Variational-Autoencoder \citep{kingma2022autoencodingvariationalbayes} (VAE) for the SVHN dataset \citep{svhn}. All algorithms are terminated if the $\ell^{2}$-norm of the gradient of the discretized energy functional in Eq.~\ref{eq:disc_const_energy} is less than $10^{-3}$, or if the number of iterations exceeds $100$. We see that \textit{GEORCE} uses less iterations and obtains the smallest length compared to the other algorithms. The details of the manifolds and experiments can be found in the supplementary material.}
    \label{fig:svhn_32}
    \vspace{-1.0em}
\end{figure}
    \section{Experiments} \label{ap:experiments}
    The following contains details of hyper-parameters, hardware and additional experiments. The implementation and code can be found at \\ \url{https://github.com/FrederikMR/georce}.
    \subsection{Hyper-parameters and methods} \label{ap:hyper_parameters}
    Table~\ref{tab:hyper_parameters} shows the hyper parameters used for the different methods. It is the same hyper-parameters that have been used for all manifolds across dimension and grid size in the paper.

\begin{table}[!ht]
    \centering
    \scriptsize
    \begin{tabular}{p{3.8cm} |p{4cm} | p{4cm}}
    \textbf{Method} & \textbf{Description} & \textbf{Parameters} \\
    \hline
    \textit{GEORCE} & See algorthm in main paper & Backtracking: $\rho=0.5$. \\
    \hline
    \textit{ADAM} \citep{kingma2017adam} & Moments based gradient descent method & $\gamma=0.01$ (step size), $\beta_{1}=0.9$, $\beta_{2}=0.999$, $\epsilon=10^{-8}$ \\
    \hline
    \textit{SGD} \citep{ruder2017overviewgradientdescentoptimization} & Stochastic gradient desent & $\gamma=0.01$ (step size), momentum=0, dampening=0, weight\_decay=0. \\
    \hline 
    \textit{BFGS} \citep{broyden_bfgs, fletcher_bfgs, Goldfarb1970AFO, shanno_bfgs} & Quasi-Newton method & - \\
    \hline 
    \textit{CG} \citep{nocedal2006numerical} & Conjugate-Gradient method & - \\
    \hline 
    \textit{dogleg} \citep{nocedal2006numerical} & trust region algorithm & - \\
    \hline 
    \textit{trust\_ncg} \citep{nocedal2006numerical} & Newton conjugate trust-region algorithm & - \\
    \hline 
    \textit{trust\_exact} \citep{nocedal2006numerical} & nearly exact trust region algorithm & - \\
    \hline
    \textit{Sparse Newton} & See Appendix~\ref{ap:newton_method} & Backtracking: $\rho=0.5$. \\
    \hline
    \textit{Sparse Regularized Newton} & See Appendix~\ref{ap:newton_method} & Backtracking: $\rho=0.5$ with $\lambda = 1.0$ and $\kappa=0.5$. \\
    \hline 
    \end{tabular}
    \caption{The Hyper-parameters for estimating of the geodesics. We refer to \text{Scipy} \citep{2020SciPy-NMeth} for the description and implementation of \textit{BFGS}, \textit{CG}, \textit{dogleg}, \textit{trust\_ncg} and \textit{trust\_exact}.}
    \label{tab:hyper_parameters}
\end{table}

Table~\ref{tab:ode_methods} shows the integration methods used for solving the shooting problem.

\begin{table}[!ht]
    \centering
    \scriptsize
    \begin{tabular}{p{3.8cm} |p{4cm} | p{4cm}}
    \textbf{Method} & \textbf{Description} & \textbf{Parameters} \\
    \hline
    \textit{RK45} \citep{rk45} & The explicit Runge-Kutta of order four to five & - \\
    \hline
    \textit{RK23} \citep{rk32} & The explicit Runge-Kutta of order two to three & - \\
    \hline
    \textit{DOP853} \citep{dop853} & The explicit Runge-Kutta of order eight & - \\
    \hline 
    \textit{Radau} \citep{radau} & Implicit Runge-Kutta method using Radau method & - \\
    \hline 
    \textit{BDF} \citep{bdf} & Implicit multi-step order using backward differentiation & - \\
    \hline 
    \textit{LSODA} \citep{odepack} & Method combining BDF and ADAM with stifness detection & - \\
    \hline 
    \end{tabular}
    \caption{The \textsc{ode} methods. The descriptions are from the \textit{Scipy}-library \citep{2020SciPy-NMeth}.}
    \label{tab:ode_methods}
\end{table}

All the timing estimates are computed by compiling the algorithms first, and then running the algorithm five times to compute the mean and standard deviation of the runtime. Table~\ref{tab:manifold_points} states the start and end point of the geodesics constructed on the different manifolds in a local coordinate system as described in Table~\ref{tab:manifold_description}.

\begin{table}[!ht]
    \centering
    \scriptsize
    \begin{tabular}{p{3cm} | p{5cm} | p{3cm}}
    \textbf{Manifold} & \textbf{Start point} & \textbf{End point} \\
    \hline
    $\mathbb{S}^{n}$ & $n$ equally spaced points between $0$ and $1$ (end point is excluded) & $\left(0.5,0.5,\dots,0.5\right)$ \\
    \hline
    $E(n)$ & $n$ equally spaced points between $0$ and $1$ (end point is excluded) & $\left(0.5,0.5,\dots,0.5\right)$ \\
    \hline
    $\mathbb{T}^{2}$ & $(0.0,0.0)$ & $(5\pi/4,5\pi/4)$ \\
    \hline
    $\mathbb{H}^{2}$ & $(1.0,1.0)$ & $(0.1,0.1)$ \\
    \hline
    Paraboloid & $(1,1)$ & $(0,0.5)$  \\
    \hline
    $\mathcal{P}(n)$ & The identity matrix in local coordinates & $n(n+1)/2$ equally spaced points between $0.5$ and $1.0$. \\
    \hline
    'Egg-tray' & $(-5.0,5.0)$ & $(5.0,5.0)$  \\
    \hline
    Gaussian Distribution & $\left(\mu_{0},\sigma_{0}\right) = \left(-1.0,0.5\right)$ & $\left(\mu_{T},\sigma_{T}\right) = \left(1.0,1.0\right)$  \\
    \hline
    Fréchet Distribution & $\left(\beta_{0},\lambda_{0}\right) = \left(0.5,0.5\right)$ & $\left(\beta_{T},\lambda_{T}\right) = \left(1.0,1.0\right)$  \\
    \hline
    Cauchy Distribution & $\left(\mu_{0},\sigma_{0}\right) = \left(-1.0,0.5\right)$ & $\left(\mu_{T},\sigma_{T}\right) = \left(1.0,1.0\right)$  \\
    \hline
    Pareto Distribution & $\left(\theta_{0},\alpha_{0}\right) = \left(0.5,0.5\right)$ & $\left(\theta_{T},\alpha_{T}\right) = \left(1.0,1.0\right)$  \\
    \end{tabular}
    \caption{The start and end point for the geodesics estimated for each manifold.}
    \label{tab:manifold_points}
\end{table}
    \subsection{Hardware} \label{ap:hardware}
    All figures have been computed on a \textit{HP} computer with Intel Core i9-11950H 2.6 GHz 8C, 15.6'' FHD, 720P CAM, 32 GB (2$\times$16GB) DDR4 3200 So-Dimm, Nvidia Quadro TI2000 4GB Descrete Graphics, 1TB PCle NVMe SSD, 150W PSU, 8cell, W11Home 64 Advanced.

The runtime experiments on a CPU have been run on a CPU model \textit{XeonE5\_2660v3} with $1$ nodes for at most $24$ hours with a maximum memory of $10$ GB.

The runtime estimates on a GPU and training of the VAE have been computed on a GPU for at most 24 hours with a maximum memory of $16$ GB. The $GPU$ consists of $1$ node on a \textit{Tesla V100}.
\end{appendices}


\end{document}